\numberwithin{equation}{section}
\newtheorem{Definition}{Definition}[section]
\newtheorem{Remark}{Remark}[section]
\newtheorem{Theorem}{Theorem}[section]
\newtheorem{Lemma}{Lemma}[section]
\newtheorem{Proposition}{Proposition}[section]
\newtheorem{Corollary}{Corollary}[section]
\newtheorem{Assumption}{Assumption}[section]
\newtheorem{Example}{Example}[section]
\newcommand{\be}{\begin{equation}}
\newcommand{\ee}{\end{equation}}
\newcommand{\bee}{\begin{equation*}}
\newcommand{\eee}{\end{equation*}}
\newcommand{\bi}{\begin{itemize}}
\newcommand{\ei}{\end{itemize}}
\DeclareMathOperator*{\argmax}{arg\,max}
\def \E{\mathbb{E}}
\def \N{\mathbb{N}}
\def \P{\mathbb{P}}
\def \R{\mathbb{R}}
\def \Dc{{\mathcal D}}
\def \Sc{{\mathcal S}}
\def \Pc{{\mathcal P}}
\def \Fc{{\mathcal F}}
\def \Hc{\mathcal{H}}
\def \eps{\varepsilon}
\def \tJ{\widetilde{J}}
\def \tV{\widetilde{V}}
\def \Leb{\operatorname{\texttt{Leb}}}
\title{Relaxed Equilibria for Time-Inconsistent Markov Decision Processes}
\author{Erhan Bayraktar\thanks{Department of Mathematics, University of Michigan, Ann Arbor, MI 48109-1043, USA, email: \texttt{erhan@umich.edu}. Partially supported by National Science Foundation (DMS-2106556) and by the Susan M. Smith Professorship.
}
\and
Yu-Jui Huang\thanks{
Department of Applied Mathematics, University of Colorado, Boulder, CO 80309-0526, USA, email: \texttt{yujui.huang@colorado.edu}. Partially supported by National Science Foundation (DMS-2109002).
}
\and 
Zhenhua Wang\thanks{Department of Mathematics, University of Michigan, Ann Arbor, MI 48109-1043, USA, email: \texttt{zhenhuaw@umich.edu}. 
}
\and 
Zhou Zhou\thanks{School of Mathematics and Statistics, University of Sydney, NSW 2006, Australia, email:
	\texttt{zhou.zhou@sydney.edu.au}.}
}
\date{}
\begin{document}
\maketitle	


\begin{abstract}
This paper considers an infinite-horizon Markov decision process (MDP) that allows for general non-exponential discount functions, in both discrete and continuous time. Due to the inherent time inconsistency, we look for a randomized equilibrium policy (i.e., relaxed equilibrium) in an intra-personal game between an agent's current and future selves. When we modify the MDP by entropy regularization, a relaxed equilibrium is shown to exist by a nontrivial entropy estimate. As the degree of regularization diminishes, the entropy-regularized  MDPs approximate the original MDP, which gives the general existence of a relaxed equilibrium in the limit by weak convergence arguments. As opposed to prior studies that consider only deterministic policies, 
our existence of an equilibrium does not require any convexity (or concavity) of the controlled transition probabilities and reward function. Interestingly, this benefit of considering randomized policies is unique to the time-inconsistent case.
\end{abstract}

\textbf{Keywords:} time inconsistency, non-exponential discounting, Markov decision processes, relaxed controls, entropy regularization

\section{Introduction}
For Markov decision processes (MDPs) on an infinite horizon, discounting is a key feature that allows the expected total reward to take a finite value. A widespread assumption in the literature is exponential discounting, i.e., the discount rate is constant over time. There is, however, substantial evidence against exponential discounting. In behavioral economics (see e.g., \citet{Thaler81}, \citet{LT89}, \citet{Laibson97}), it is well documented that the discount rate is empirically time-varying and many non-exponential functions have been proposed to model empirical discounting; see e.g., \citet[Remark 3.1]{HZ19}.

For the past fifteen years or so, non-exponential discounting has been seriously considered and approached in stochastic control, and the involved mathematical challenge is now well understood. In a nutshell, non-exponential discounting causes {\it time inconsistency}: an optimal control policy an agent derives today will not be optimal from the eyes of the same agent tomorrow. As a dynamically optimal policy over the entire time horizon no longer exists, \citet{Strotz55} suggests that one instead look for an {\it equilibrium policy} in an intra-personal game between one's current and future selves. 
A standard equilibrium (i.e., an equilibrium policy as a {\it deterministic} map on the state space) has been studied under non-exponential discounting in diffusion models (e.g., \citet{EL06}, \citet{Ekeland2008investment}, \citet{ekeland2012time}, \citet{yong2012time}, \citet{BKM17}, \citet{huang2018time}, \citet{HZ20},  \citet{MR4511236, BWZ23}), {\color{black}as well as in models of controlled Markov chains (e.g., \citet{BRW15}, \citet{CE16}, \citet{BRW18}, \citet{BJN2020}, \citet{HZ21}, \citet{BRW22}, \citet{BH23}). Many of the studies establish the existence of a standard equilibrium for a general state space (e.g., a Borel space), but require very specific forms of discounting (e.g., quasi-hyperbolic or hyperbolic), the reward function (e.g., monotone and supermodular), and the controlled transition probabilities (e.g., decomposable into supermodular functions); see the assumptions in \cite{BRW15, CE16, BRW18, BJN2020, BRW22}.\footnote{{\color{black}\cite{BH23} considers a finite-horizon discrete-time model, for which a recursive backward induction provides an equilibrium. This method does not work for an infinite horizon and therefore cannot be applied to our study.}} These conditions need not hold even in fairly simple examples, such that a standard equilibrium may fail to exist; see e.g., \cite[Remark 10]{HZ21}.


This motivates us to consider {\it relaxed equilibria} (i.e., {\it randomized} equilibrium policies), to possibly extend the existence of equilibria to cases where discounting, reward functions, and controlled transition probabilities are all general. To this end, it is natural to consider MDPs, which by definition considers randomized control policies (i.e., {\it relaxed controls}). 
}

In the context of reinforcement learning, \citet{AB10}, \citet{Fedus20}, and \citet{SRK22} design algorithms to compute optimal relaxed controls for MDPs under non-exponential discounting, but fail to realize that such policies are unsustainable due to time inconsistency. To the best of our knowledge, only 
the recent work \citet{JN21} recognizes the issue of time inconsistency and applies Strotz's game-theoretic approach to MDPs: 
they prove that a relaxed equilibrium exists for discrete-time MDPs for a general state space, reward function (bounded and continuous), and transition probabilities (transition densities exist and are continuous), but require the discount function to be quasi-hyperbolic. In fact, their arguments rely crucially on the specific form of a quasi-hyperbolic discount function (which is stylized and tractable) and do not allow for other kinds of non-exponential discounting.

In this paper, we accommodate {\it general} discount functions and strive to establish the existence of a relaxed equilibrium for the resulting time-inconsistent MDPs in both discrete and continuous time. 
As we can no longer rely on the form of a discount function, our approach differs largely from that in \citet{JN21}. 

Instead of working with the original MDP (i.e., \eqref{J^pi} below) directly, we consider an entropy-regularized version, where the entropy of a randomized control policy (i.e., a {\it relaxed control}) is added to the functional to be maximized; see \eqref{eq.relaxed.J} below. Taking advantage of the form of the entropy term, we characterize relaxed equilibria for the entropy-regularized MDP (which we call {\color{black}{\it regular} relaxed equilibria}) as fixed points of an operator, which takes a tractable Gibbs-measure form; see Proposition \ref{propdiscr.entropy.cha} and Corollary~\ref{coro:Psi}. By showing that the fixed-point operator continuously maps a compact domain to itself, we conclude from Brouwer's fixed-point theorem that a {\color{black}regular relaxed equilibrium} exists; 
see Theorem \ref{thm.discrete.relaxedentropy}. Note that for the operator to map a compact domain to itself, the growth of the entropy term needs to be contained appropriately, which is a known mathematical challenge. To achieve this, we assume that the reward function and controlled transition probabilities are Lipschitz in the action variable (Assumption~\ref{assume.u.lips}) and the action space $U$ fulfills a uniform cone condition (Assumption~\ref{assume.U.cone}). Then, following an argument recently proposed in \citet[Section 4.3]{HWZ22}, we get logarithmic growth of the entropy term uniformly in the state variable (Lemma~\ref{lm.1}), which facilitates the proof of Theorem~\ref{thm.discrete.relaxedentropy}. 

For the original MDP \eqref{J^pi}, relaxed equilibria can also be characterized as fixed points of an operator (Proposition~\ref{prop.chadiscrete.nonentropy}). {\color{black}However, the operator is an abstract set-valued map, which is, compared with the concrete single-valued operator under entropy regularization, much less tractable and much less promising for numerical implementation; see Remark~\ref{rem:numerical}. 
Thus,} we approximate the original MDP by a sequence of entropy-regularized MDPs, with the degree of regularization (measured by $\lambda>0$ in \eqref{eq.relaxed.J}) diminishing to zero. This yields a sequence of {\color{black}regular relaxed equilibria}, one for each entropy-regularized MDP. 
Intriguingly, the value functions corresponding to this sequence are uniformly bounded: as shown in Lemma~\ref{lm.vn.bound}, the logarithmic growth of entropy (obtained in Lemma~\ref{lm.1} for each fixed $\lambda>0$) can be made uniform across all $\lambda>0$ small enough, thereby giving a uniform bound for the value functions. Relying on this, a delicate probabilistic argument shows that the sequence of {\color{black}regular relaxed equilibria} for entropy-regularized MDPs (i.e., fixed points of tractable Gibbs-form operators) converges weakly to a fixed point of the set-valued operator for the original MDP. That is, a relaxed equilibrium for the original MDP exists; see Theorem \ref{thm.lambdaconvergence.discrete}. 

All the above, established in discrete time, can be extended to continuous time. For entropy-regularized continuous-time MDP, {\color{black}regular relaxed equilibria} can again be characterized as fixed points of an operator of the Gibbs-measure form; see Proposition \ref{propconti.entropy.cha} and Corollary~\ref{coro:Psi'}. The existence of a {\color{black}regular relaxed equilibrium} is  accordingly established in Theorem~\ref{thm1}. Finally, we approximate a continuous-time MDP by a sequence of its entropy-regularized versions, with diminishing degree of regularization, and find that the sequence of {\color{black}regular relaxed equilibria} (one for each entropy-regularized MDPs) converges to a relaxed equilibrium for the original MDP; see Theorem~\ref{thm.lambdaconvergence.continuous}.  Note that our continuous-time results are reconciled with those in discrete time: as shown in Theorem~\ref{thm.discts.convergence}, when the time step tends to zero, {\color{black}(regular)} relaxed equilibria in discrete time converge to one in continuous time. 

Our mathematical setup generalizes that in \citet{HZ21}, where controlled Markov chains (in both discrete and continuous time) are considered under non-exponential discounting. Specifically, one chooses transition probabilities directly (i.e., an action is simply a vector of transition probabilities) in \citet{HZ21}, while we allow for general actions that may affect transition probabilities in a much more subtle way; {\color{black}see Remark~\ref{rem:more general p^u} for details.} 
In addition, \citet{HZ21} focuses on deterministic policies (i.e., standard controls), while we include randomized ones (i.e., relaxed controls). A key result in \citet{HZ21} states that a standard equilibrium exists, under a suitable condition on the transition probabilities and reward function; moreover, if such a condition fails, a standard equilibrium may not exist in general. This crucially explains the need of relaxed controls: unlike standard equilibria, a relaxed equilibrium generally exists, as proved in Theorems~\ref{thm.lambdaconvergence.discrete} and \ref{thm.lambdaconvergence.continuous}. Hence, in the face of general controlled transition probabilities and reward function, relaxed equilibria should certainly be considered; see Remark~\ref{rem:relaxed needed} and the discussion below it. 

Interestingly, the need of relaxed controls is unique to the time-inconsistent case. In the time-consistent case of exponential discounting, even if one considers relaxed controls, the optimal value achieved by a relaxed control can always be achieved alternatively by a standard control---it is then necessary to consider only standard controls; see Remarks~\ref{rm.discrete} and \ref{rm.conti}. 

Let us stress that an entropy-regularized MDP, besides serving as a powerful approximation tool, has it own meaning and application. As introduced in \citet{Ziebart08, Fox16} and clearly explained in \citet{WZZ20}, an entropy-regularized MDP  encodes the tradeoff between exploitation and exploration in reinforcement learning, with the exploration part represented by the added entropy term. This line of research has so far focused on time-consistent cases. The only exception we know of is \citet{DDJ23}, where a relaxed equilibrium is found for a mean-variance portfolio selection problem. Our results contribute to the burgeoning area of reinforcement learning under time inconsistency: the {\color{black}regular relaxed equilibrium} we found represents a learning policy under time inconsistency induced by non-exponential discounting; see the discussion below Theorem~\ref{thm.lambdaconvergence.discrete} and Remark~\ref{rem:RL}.  

The paper is organized as follows. Section \ref{sec:discrete} introduces a time-inconsistent MDP in discrete time that accommodates non-exponential discounting; an entropy-regularized version is also defined. We prove the existence of {\color{black}regular relaxed equilibria} for entropy-regularized MDPs (Section \ref{subsec.discrete.entropy}) and obtain the  existence of relaxed equilibria for the original MDP by an approximation argument (Section \ref{subsec.discrete.noentropy}).  Section~\ref{subsec.cts.entropy} (resp.\ Section \ref{subsec.cts.noentropy}) extends results in Section \ref{subsec.discrete.entropy} (resp.\ Section \ref{subsec.discrete.noentropy}) to continuous time. Section \ref{subsec:relaxed needed} discusses when the use of relaxed controls is necessary. Section~\ref{sec:disctsconverges} shows that discrete-time {\color{black}(regular)} relaxed equilibria converge to a {\color{black}(regular)} relaxed equilibrium in continuous time, as time step tends to zero. The appendix collects (longer) proofs. 


\section{Relaxed Equilibria in Discrete Time}\label{sec:discrete}
Set $\N_0 := \N\cup \{0\}$. Let $U\subset \R^\ell$, for a fixed $\ell\in\N$, be a compact action space with $\Leb(U)>0$. Consider a probability space $(\Omega,\Fc,\P)$ that supports a discrete-time Markov process $(X_t)_{t\in \N_0}$ that takes values in $\Sc=\{1,2,...,d\}$ for some $d\in\N$ and is controlled by a $U$-valued process $\alpha$. Assume that the dynamics of $X$ is time-homogeneous, i.e., for any $t\in \N_0$, $\P(X_{t+1}=j\mid X_t=i, \alpha_t=u) = \P(X_1= j\mid X_0=i, \alpha_0=u)$ for all $i,j\in\Sc$ and $u\in U$. For any action $u\in U$, we will denote by $p^u$ the associated transition matrix of $X$ (i.e., $p^u_{ij} := \P(X_1=j\mid X_0=i, \alpha_0=u)$ for all $i,j\in\Sc$) and write $p^u_i$ for the $i^{th}$ row of $p^u$ for all $i\in\Sc$. 

We call $\alpha:\Sc\to U$ a  {\it standard feedback control} and denote by $p^\alpha$ the transition matrix induced by $\alpha$. Specifically, the $i^{th}$ row of $p^\alpha$ (denoted by $p^\alpha_i$) is given by $p^\alpha_i = p^{\alpha(i)}_i$ for all $i\in\Sc$. 
Let $\Pc(U)$ (resp.\ $\Dc(U)$) denote the set of all probability measures (resp.\ density functions) on $U$. We call $\pi:\Sc\to \Pc(U)$ a {\it relaxed feedback control}. Note that a standard feedback control $\alpha$ can be viewed as a relaxed feedback control $\pi$ by taking $\pi(i)$ to be the Dirac measure concentrated on $\alpha(i)\in U$ for all $i\in\Sc$. We denote by $\Pi$ the set of all relaxed feedback controls.


Given $\pi\in\Pi$, the dynamics of $X=X^\pi$ is determined as follows. At any time $t\in\N_0$, given that $X_t =i \in \Sc$, we sample $u\in U$ according to the probability measure $\pi(i)\in\Pc(U)$. The realization of $X_{t+1}$ is then governed by the transition probabilities $p^{u}_i = \{p^u_{ij}\}_{j\in\Sc}$. 

\begin{Remark}\label{rem:same law}
	Given $\pi\in\Pi$, consider a Markov chain $\bar X^\pi$ whose transition matrix $p^\pi$ is given by  $p^\pi_{ij} := \int_U p^u_{ij} (\pi(i))(du)$ for all $i,j\in\Sc$. Note that $\bar X^\pi$ and $X^\pi$ are different stochastic processes that share the same law. That is, the paths (i.e., realizations) taken by $\bar X^\pi$ and $X^\pi$ are generally different, but the probability of which path will be taken is identical. 
\end{Remark}

Given a reward function $f:[0,\infty)\times\Sc\times U\to \R$, we define $f^\mu(t, i):= \int_{U} f(t, i, u)\mu(du)$ for all $\mu\in \Pc(U)$. For any $\pi\in\Pi$, the corresponding value function is given by 
\begin{equation}\label{J^pi}
	J^\pi(i) := \E_i\bigg[ \sum_{k=0}^\infty f^{\pi(X^\pi_k)}\left(k,X^\pi_k\right)\bigg],\quad \forall i\in \Sc.
\end{equation}

\begin{Remark}\label{rem:delta}
	The $t$ variable in $f(t,i,u)$ does not represent ``real calendar time'' but ``time difference,'' that is, the difference between the current time and the time of a future reward; see \citet[Remark 1]{HZ21}. A typical example is 
	\begin{equation}\label{discount problem}
		f(t,i,u)=\delta(t) g(i,u), 
	\end{equation}
	where $g:\Sc\times U\to \R$ assigns a reward based on the current state $i$ and the action $u$ employed and $\delta:[0,\infty)\to [0,1]$ is a discount function, assumed to be nonincreasing with $\delta(0)=1$. 
\end{Remark}

In this paper, we will investigate an entropy-regularized version of $J^\pi(i)$. To this end, let us first introduce the notion of a regular relaxed feedback control. 

\begin{Definition}\label{def:rrc}
	A relaxed feedback control $\pi\in\Pi$ is {\it regular} if for each $i\in \Sc$, there exists $\rho_i\in\Dc(U)$ such that 
	$(\pi(i))(A) = \int_A \rho_i(u) du$ for all Borel $A\subseteq U$ and its  
	Shannon differential entropy satisfies $\Hc(\rho_i)>-\infty$, where 
	\begin{equation}\label{H}
		\Hc(\rho) := - \int_{U} \ln(\rho(u)) \rho(u) du,\quad \forall \rho\in\Dc(U).
	\end{equation}
	We denote by $\Pi_r$ the set of all regular feedback relaxed controls. Given $\pi\in\Pi_r$, we will often identify $\pi(i)\in\Pc(U)$ with its density $\rho_i\in\Dc(U)$ and write ``$\pi\in\Pi_r$'' and ``$\{\rho_i\}_{i\in\Sc}\in\Pi_r$'' interchangeably. 
\end{Definition}

Now, for any $\lambda> 0$ and $\pi\in\Pi_r$, consider 
\be\label{eq.relaxed.J} 
J^\pi_\lambda(i):= \E_i\bigg[ \sum_{k=0}^\infty \left( f^{\pi({\color{black}X^\pi_k})}(k,X^\pi_k) + \lambda  \delta(k)\Hc\big({\pi({\color{black}X^\pi_k})}\big) \right) \bigg],\quad \forall i\in \Sc,
\ee 
where {\color{black}$\delta:[0,\infty)\to [0,1]$ is a discount function, assumed to be nonincreasing with $\delta(0)=1$.} 
To ensure that $J^\pi$ in \eqref{J^pi} and $J^\pi_\lambda$ in \eqref{eq.relaxed.J} are well-defined, we impose the following.

\begin{Assumption}\label{assume.f.p}
	$M:= \sum_{t=0}^\infty \left( \sup_{i\in \Sc, u\in U} |f(t,i,u)|+\delta(t)\right)<\infty$. 
\end{Assumption}

By Assumption~\ref{assume.f.p}, $J^\pi$ is clearly finitely valued for all $\pi\in\Pi$. On the other hand, under our assumption $0<\Leb(U)<\infty$, consider the uniform density $\nu\in\Dc(U)$ given by $\nu(u):=1/\Leb(U)$ for all $u\in U$. For any $\rho\in \Dc(U)$, we compute the Kullback-Leibler divergence
\begin{equation}\label{eq.kl.lbound}
	0\leq D_{KL}(\rho\|\nu) :=\int_U \rho\ln\left(\frac{\rho}{\nu}\right)du=\int_U \rho\ln \rho du+\ln (\Leb(U)),
\end{equation}
which gives $\Hc(\rho) = -\int_{U} \ln(\rho(u))\rho(u)du\leq \ln(\Leb(U))<\infty$ for all $\rho\in \Dc(U)$. This, along with Definition~\ref{def:rrc}, indicates that for any $\pi\in\Pi_r$, 
\begin{equation}\label{H finite}
	|\Hc({\pi(i)})| < \infty\quad \hbox{for all}\ i\in\Sc.   
\end{equation}
By Assumption~\ref{assume.f.p}, \eqref{H finite}, and $\Sc$ being a finite set, $J^\pi_\lambda$ is finitely valued for all $\pi\in\Pi_r$. 


An agent who aims to maximize $J^\pi(i)$ over all $\pi\in \Pi$ may run into the issue of time inconsistency. 
{\color{black} Specifically, given that $X_0=i\in\Sc$, the time-0 agent's problem is $\sup_{\pi\in\Pi} J^\pi(i)$. At a later time $t>0$ with $X_t = j\neq i$, the time-$t$ agent's problem is $\sup_{\pi\in\Pi}J^\pi(j)$. As the two problems $\sup_{\pi\in\Pi} J^\pi(i)$ and $\sup_{\pi\in\Pi} J^\pi(j)$ need not share the same optimal control $\pi^*\in\Pi$, time-inconsistency may arise.}
Such inconsistency results from the ``time difference'' variable $t$ in either the reward function $f$ or the discount function $\delta$; see Remark~\ref{rem:delta}. In the typical setup \eqref{discount problem}, it is well-known that the optimization problems are time-consistent with $\delta(t) = e^{-\beta t}$ for some $\beta>0$ (i.e., the case of exponential discounting) but time-inconsistent in general. 


As proposed in \citet{Strotz55}, a sensible reaction to time inconsistency is to take future selves' disobedience into account and choose the best present action in response to that. Assuming that all future selves reason in the same way, the agent searches for a (subgame perfect) equilibrium strategy from which no future self has an incentive to deviate. To formulate such an equilibrium strategy, we introduce, for any $\pi, \pi'\in \Pi$, the concatenation of $\pi'$ and $\pi$ at time $1$, denoted by $\pi'\otimes_1\pi\in\Pi$.  Using $\pi'\otimes_1\pi\in\Pi$ 
means that the evolution of $X$ is governed first by $\pi'$ at time 0 and then by $\pi$ from time 1 onward. {\color{black} That is, at time 0, given that $X_0 =i \in \Sc$, we sample $u\in U$ according to the measure $\pi'(i)\in\Pc(U)$ and the realization of $X_{1}$ is then governed by the transition probabilities $p^{u}_i = \{p^u_{ij}\}_{j\in\Sc}$. At any time $t\ge 1$, given that $X_t=j\in\Sc$, we sample $\bar u\in U$ according to the measure $\pi(j)\in\Pc(U)$ and the realization of $X_{t+1}$ is then governed by $p^{\bar u}_j = \{p^{\bar u}_{jk}\}_{k\in\Sc}$. 
} 


\begin{Definition}\label{def.relaxedequi.discrete}
	Given $\lambda> 0$, we say $\pi\in\Pi_r$ is a {\color{black}regular relaxed equilibrium} (for \eqref{eq.relaxed.J}) 
	if for any $ i\in \Sc$,
	\be\label{eq.def.equidiscrete} 
	J^{\pi'\otimes_1 \pi}_\lambda(i)- J^{\pi}_\lambda(i)\leq 0,\quad \forall \pi'\in \Pi_r. 
	\ee 
	Similarly, we say $\pi\in\Pi$ is a relaxed equilibrium (for \eqref{J^pi}) if for any $ i\in \Sc$, 
	\be\label{eq.def.equidiscrete'} 
	J^{\pi'\otimes_1 \pi}(i)- J^{\pi}(i)\leq 0,\quad \forall \pi'\in \Pi. 
	\ee 
\end{Definition}


\subsection{Existence of {\color{black}Regular Relaxed Equilibria} for Entropy-Regularized MDP \eqref{eq.relaxed.J}}\label{subsec.discrete.entropy}
Let us first focus on the existence of regular relaxed equilibria (for \eqref{eq.relaxed.J}). 
Fix $\lambda>0$. For any $\pi\in\Pi_r$, let us introduce the auxiliary value function 
\be\label{eq.relaxed.V.lambda} 
V_\lambda^\pi(i):= \E_i \bigg[ \sum_{k=0}^\infty \left( f^{\pi\left( {\color{black}X^\pi_k} \right)}(1+k,X^\pi_k)+\lambda\delta(1+k) \Hc\big({\pi({\color{black}X^\pi_k})}\big) \right) \bigg], \quad \forall i\in\Sc. 
\ee 
For convenience, we will commonly write $V^\pi_\lambda$ as a vector in $\R^d$, i.e., $V^\pi_\lambda = (V^\pi_\lambda(1), V^\pi_\lambda(2),...,V^\pi_\lambda(d))$. 
{\color{black}
To understand the meaning of $V^\pi_\lambda$, we need to take a closer look at $J^\pi_\lambda(i)$ in \eqref{eq.relaxed.J}. Given that $X_0=i\in\Sc$, notice that the first term in the summation of \eqref{eq.relaxed.J} is no longer random and can be computed immediately. The remaining terms in the summation are still random and their expectation depends on the realization of $X^\pi_1$. Specifically,
\be\label{V_lambda meaning} 
\E_i\bigg[ \sum_{k=1}^\infty \left( f^{\pi(X^\pi_k)}(k,X^\pi_k) + \lambda  \delta(k)\Hc\big({\pi(X^\pi_k)}\big) \right)\bigg|\ X^\pi_1=j \bigg]= V^\pi_\lambda(j),\quad \forall j\in \Sc,
\ee 
where the equality follows from the Markov property of $X^\pi$. That is, $V^\pi_\lambda(j)$ is the expectation of future rewards plus entropy conditioned on $X_1^\pi=j$. 

Now, for the problem \eqref{eq.relaxed.J}, given $X_0=i\in\Sc$ and that all future selves will follow a relaxed control $\pi\in\Pi_r$, the agent at time 0 intends to find her best strategy (denoted by $\pi'\in\Pi_r$) in response to that. Observe from \eqref{eq.relaxed.J} and \eqref{V_lambda meaning} that
\begin{align*}
J_\lambda^{\pi'\otimes_1\pi}(i) &= f^{\pi'(i)}(0,i)+\lambda \Hc(\pi'(i))+\E_i[V^{\pi}_\lambda(X^{\pi'}_1)] \\
&=  \int_U \Big( f(0,i,u) -\lambda \ln((\pi'(i))(u))+ p^u_i\cdot V^\pi_\lambda \Big) (\pi'(i))(u)du.
\end{align*}
Hence, the best strategy $\pi'\in\Pi_r$ for the time-0 agent should satisfy
\begin{equation}\label{unique argmax}
\pi'(i)\in  \argmax_{\rho\in \Dc(U)}   \int_U \Big( f(0,i,u) -\lambda \ln(\rho(u))+ p^u_i\cdot V^\pi_\lambda \Big) \rho(u)du,\quad\forall i\in \Sc.
\end{equation}
Note that the set of maximizers on the right-hand side is a singleton and its unique element takes the explicit form 
	\[
	\rho^*_i(u) := \frac{e^{\frac{1}{\lambda} \big(f(0,i,u)+ p^u_i\cdot V^\pi_\lambda\big)} }{\int_{U} e^{\frac{1}{\lambda} \big(f(0,i,u)+ p^u_i\cdot V^\pi_\lambda\big)} du},\quad u\in U.
	\]
As a result, by introducing a functional $\Gamma_\lambda: \R^d\times\Sc\to \Dc(U)$ defined by
\begin{equation}\label{Gamma_lambda}
	u\mapsto\Gamma_\lambda(y,i) (u) : = \frac{e^{\frac{1}{\lambda} \left(f(0,i,u) +p^u_i\cdot y\right)} }{\int_{U} e^{\frac{1}{\lambda} \left(f(0,i,v) +p^v_i\cdot y\right)} dv}\in \Dc(U),\quad \forall (y,i)\in \R^d\times\Sc,
\end{equation}
we can re-write \eqref{unique argmax} as 
\be\label{optimal pi'}
\pi'(i) = \Gamma_\lambda(V^\pi_\lambda, i)(\cdot)\in\Dc(U),\quad \forall i\in\Sc. 
\ee

We have argued so far that $\pi'\in\Pi_r$ in \eqref{optimal pi'} is the time-0 agent's best response to her future selves using $\pi\in\Pi_r$. If it happens that the best response $\pi'\in\Pi_r$ coincides with future selves' strategy $\pi\in\Pi_r$ (i.e., $ \Gamma_\lambda(V^\pi_\lambda, \cdot)=\pi$), then $\pi\in\Pi_r$ should be viewed as an equilibrium among the current and future selves, as it is a strategy that will be upheld over time. This motivates us to define an operator $\Phi_\lambda:\Pi_r\to\Pi_r$ by 
\be\label{Phi_lambda}
\Phi_\lambda(\pi) := \Gamma_\lambda(V^\pi_\lambda,\cdot), 
\ee
and we conjecture that a fixed point of $\Phi_\lambda$ is a regular relaxed equilibrium for \eqref{eq.relaxed.J}. In addition, as $\Gamma_\lambda(V^\pi_\lambda, \cdot)=\pi$ consequently yields $V^{\Gamma_\lambda(V^\pi_\lambda, \cdot)}_\lambda=V^\pi_\lambda$, we also define an operator $\Psi_\lambda:\R^d\to\R^d$ by 
\[
\Psi_\lambda(y) := V^{\Gamma_\lambda (y,\cdot)}_\lambda, 
\]
and conjecture that a fixed point of $\Psi_\lambda$ must equal $V^\pi_\lambda$ for some regular relaxed equilibrium $\pi\in\Pi_r$.


The next two results show that our conjectures are correct. 
}

\begin{Proposition}\label{propdiscr.entropy.cha}
	Let Assumption \ref{assume.f.p} hold.  For any $\lambda>0$, 
	\[
	\hbox{$\pi\in \Pi_r$ is a {\color{black}regular relaxed equilibrium}}\  \iff\  \Phi_\lambda(\pi)=\pi.
	\]
\end{Proposition}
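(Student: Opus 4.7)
The plan is to reduce the equilibrium inequality \eqref{eq.def.equidiscrete} to a pointwise (in $i\in\Sc$) variational problem on densities, and then invoke the Gibbs variational formula to identify the unique maximizer as $\Gamma_\lambda(V^\pi_\lambda,i)$.

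For the first step, I would isolate the $k=0$ term in \eqref{eq.relaxed.J} and apply the Markov property together with time-homogeneity. Reindexing the tail sum via $k\mapsto k-1$ produces exactly $V^\pi_\lambda$ as defined in \eqref{eq.relaxed.V.lambda} (the shifted discount factor $\delta(1+k)$ is precisely what comes out of this reindexing). Using $\delta(0)=1$, this yields
\[
J^\pi_\lambda(i) = \int_U \bigl[f(0,i,u) + p^u_i\cdot V^\pi_\lambda\bigr]\,(\pi(i))(du) + \lambda\,\Hc(\pi(i)),
\]
and the identical decomposition for $J^{\pi'\otimes_1\pi}_\lambda(i)$ with $\pi(i)$ replaced by $\pi'(i)$ on the right-hand side, because $\pi'\otimes_1\pi$ coincides with $\pi$ from time $1$ onward so $V^\pi_\lambda$ still governs the continuation. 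Subtracting, the equilibrium inequality \eqref{eq.def.equidiscrete} becomes
\[
F_i(\pi'(i)) \leq F_i(\pi(i)) \qquad \forall\, i\in\Sc,\ \forall\, \pi'\in\Pi_r,
\]
where $F_i(\rho) := \int_U g_i(u)\rho(u)\,du + \lambda\Hc(\rho)$ and $g_i(u) := f(0,i,u)+ p^u_i\cdot V^\pi_\lambda$. Note the constraints on $\pi'(j)$ for $j\ne i$ are irrelevant here, so this is truly a pointwise problem.

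For the second step, the Gibbs variational principle. Setting $\rho^*_i(u) := e^{g_i(u)/\lambda}/Z_i$ with $Z_i:=\int_U e^{g_i(v)/\lambda}dv$---so that $\rho^*_i = \Gamma_\lambda(V^\pi_\lambda,i)$---substituting $g_i(u)=\lambda\ln \rho^*_i(u)+\lambda\ln Z_i$ into the definition of $F_i$ gives
\[
F_i(\rho) = \lambda\ln Z_i \;-\; \lambda\,D_{KL}(\rho\,\|\,\rho^*_i),
\]
so $\rho^*_i$ is the unique maximizer of $F_i$ over $\Dc(U)$ (up to a.e.\ equality). Assumption \ref{assume.f.p} and the finiteness of $V^\pi_\lambda$ recorded after \eqref{H finite} ensure $g_i$ is bounded, hence $\rho^*_i$ is bounded away from $0$ and $\infty$ and lies in $\Pi_r$. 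The equivalence follows: if $\pi$ is a relaxed equilibrium then choosing $\pi'=\Gamma_\lambda(V^\pi_\lambda,\cdot)\in\Pi_r$ gives $F_i(\rho^*_i)\le F_i(\pi(i))$, which combined with the reverse Gibbs inequality forces $\pi(i)=\rho^*_i$, i.e., $\Phi_\lambda(\pi)=\pi$; conversely, if $\Phi_\lambda(\pi)=\pi$ then $\pi(i)$ is already the Gibbs maximizer, so the equilibrium inequality is automatic. The only delicate point is the dynamic programming decomposition in the first step (tracking that $\Hc(\pi(X_k))$ is merely a function of $X_k$ and that the time-shifted discount factors align correctly); once in place, the Gibbs identity is essentially a one-line computation, so no substantial obstacle is expected.
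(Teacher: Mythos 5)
Your proposal is correct and follows essentially the same route as the paper: decompose $J^\pi_\lambda(i)$ (and $J^{\pi'\otimes_1\pi}_\lambda(i)$) into the time-$0$ reward-plus-entropy term and the continuation value $\E_i[V^\pi_\lambda(X_1)]$, reduce the equilibrium condition to a pointwise maximization of $\rho\mapsto\int_U(f(0,i,u)+p^u_i\cdot V^\pi_\lambda)\rho\,du+\lambda\Hc(\rho)$ over $\Dc(U)$, and identify the unique maximizer as the Gibbs density $\Gamma_\lambda(V^\pi_\lambda,i)$. Your explicit Kullback--Leibler identity $F_i(\rho)=\lambda\ln Z_i-\lambda D_{KL}(\rho\|\rho^*_i)$ just makes precise the uniqueness claim that the paper states without elaboration.
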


\begin{proof}
	Given $\pi,\pi'\in \Pi_r$, since $J_\lambda^{\pi'\otimes_1\pi}(i)=f^{\pi'(i)}(0,i)+\lambda \Hc(\pi'(i))+\E_i[V^{\pi}_\lambda(X^{\pi'}_1)$] for all $i\in\Sc$, a direct calculation shows
	\begin{align*}
		J_\lambda^{\pi'\otimes_1 \pi}(i)- J_\lambda^{\pi}(i) &=f^{\pi'(i)}(0,i)+\lambda \Hc(\pi'(i))+\int_{U} (p^u_i\cdot V^\pi_\lambda) (\pi'(i))(u) du\\
		&\hspace{0.4in}-\left(f^{\pi(i)}(0,i)+\lambda \Hc(\pi(i))+ \int_{U} (p^u_i\cdot V^\pi_\lambda) (\pi(i))(u)du\right),\quad \forall i\in\Sc.
	\end{align*}
	It follows that \eqref{eq.def.equidiscrete} holds (i.e., $\pi$ is a {\color{black}regular relaxed equilibrium}) if and only if ${\pi(i)}\in\Dc(U)$ fulfills
	\be\label{eq.discrete.lm} 
	\begin{aligned}
		{\pi(i)} &\in  \argmax_{\rho\in \Dc(U)}   \int_U \Big( f(0,i,u) -\lambda \ln(\rho(u))+ p^u_i\cdot V^\pi_\lambda \Big) \rho(u)du,\quad\forall i\in \Sc.
	\end{aligned}
	\ee 
	By the same arguments in \eqref{unique argmax}-\eqref{optimal pi'}, we can express \eqref{eq.discrete.lm} equivalently as  
	${\pi(i)} = \Gamma_\lambda(V^\pi_\lambda, i)$ for all $i\in\Sc$, which amounts to $\pi = \Phi_\lambda(\pi)$. 
\end{proof}

\begin{Corollary}\label{coro:Psi}
	Let Assumption \ref{assume.f.p} hold and $\lambda>0$. For any $y\in\R^d$, 
	\[
	\hbox{$y= V^\pi_\lambda$ for some {\color{black}regular relaxed equilibrium} $\pi\in\Pi_r$}\ \iff\ \Psi_\lambda(y)=y. 
	\]
	In particular, $\Psi_\lambda(y)=y$ implies that $\Gamma_\lambda(y,\cdot)\in\Pi_r$ is a {\color{black}regular relaxed equilibrium}. 
\end{Corollary}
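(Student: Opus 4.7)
The plan is to derive the corollary as a direct translation of Proposition \ref{propdiscr.entropy.cha} from the ``policy space'' $\Pi_r$ to the ``value space'' $\R^d$, using the fact that the Gibbs form $\Gamma_\lambda(y,\cdot)$ lets one recover a candidate policy from any $y \in \R^d$. The key observation is that $\Phi_\lambda$ factors through $\Gamma_\lambda$ via $\Phi_\lambda(\pi) = \Gamma_\lambda(V^\pi_\lambda,\cdot)$, and $\Psi_\lambda$ factors as $\Psi_\lambda(y) = V^{\Gamma_\lambda(y,\cdot)}_\lambda$, so fixed points of one naturally correspond to fixed points of the other.

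For the forward implication, I would start from a relaxed equilibrium $\pi \in \Pi_r$ with $y = V^\pi_\lambda$. Proposition \ref{propdiscr.entropy.cha} gives $\Phi_\lambda(\pi) = \pi$, i.e., $\Gamma_\lambda(V^\pi_\lambda,\cdot) = \pi$, which rewrites as $\Gamma_\lambda(y,\cdot) = \pi$. Applying $V^{\cdot}_\lambda$ yields $\Psi_\lambda(y) = V^{\Gamma_\lambda(y,\cdot)}_\lambda = V^\pi_\lambda = y$, as required.

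For the reverse implication (which also covers the ``in particular'' clause), I would set $\pi := \Gamma_\lambda(y,\cdot) \in \Pi_r$, which is well-defined since $\Gamma_\lambda(y,\cdot) \in \Pi_r$ for every $y \in \R^d$ by the remark following \eqref{Gamma_lambda}. The hypothesis $\Psi_\lambda(y) = y$ says precisely $V^\pi_\lambda = y$, so $\Phi_\lambda(\pi) = \Gamma_\lambda(V^\pi_\lambda,\cdot) = \Gamma_\lambda(y,\cdot) = \pi$. Proposition \ref{propdiscr.entropy.cha} then certifies that $\pi$ is a relaxed equilibrium for \eqref{eq.relaxed.J}, and evidently $y = V^\pi_\lambda$.

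I do not anticipate a genuine obstacle: the argument is a two-line chase of definitions once Proposition \ref{propdiscr.entropy.cha} is in hand. The only minor point worth being explicit about is that $\Gamma_\lambda(y,\cdot)$ is indeed regular (hence lies in $\Pi_r$) for every $y\in\R^d$, which follows because the Gibbs density in \eqref{Gamma_lambda} is strictly positive and bounded on the compact set $U$, so its Shannon entropy is finite by the computation in \eqref{eq.kl.lbound}.
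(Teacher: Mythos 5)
Your proposal is correct and follows essentially the same route as the paper: both directions are the same definition chase through $\Phi_\lambda(\pi)=\Gamma_\lambda(V^\pi_\lambda,\cdot)$ and $\Psi_\lambda(y)=V^{\Gamma_\lambda(y,\cdot)}_\lambda$, invoking Proposition \ref{propdiscr.entropy.cha} at the same points. Your added remark on why $\Gamma_\lambda(y,\cdot)\in\Pi_r$ is a harmless (and accurate) elaboration of what the paper states just after \eqref{Gamma_lambda}.
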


\begin{proof}
	If $y=V^\pi_\lambda$ for a {\color{black}regular relaxed equilibrium} $\pi\in\Pi_r$, then by Proposition~\ref{propdiscr.entropy.cha}, $\pi=\Phi_\lambda(\pi)= \Gamma_\lambda(V^{\pi}_\lambda,\cdot)=\Gamma_\lambda(y,\cdot)$, which implies $y=V^\pi_\lambda=V^{\Gamma_\lambda(y,\cdot)}_\lambda=\Psi_\lambda(y)$. Conversely, if $y= \Psi_\lambda(y)=V^{\Gamma_\lambda(y,\cdot)}_\lambda$, set $\pi:= \Gamma_\lambda(y,\cdot)\in\Pi_r$. Then, we have $y=V^\pi_\lambda$ and thus $\pi = \Gamma_\lambda(y,\cdot)=\Gamma_\lambda(V^{\pi}_\lambda,\cdot)=\Phi_\lambda(\pi)$. By Proposition~\ref{propdiscr.entropy.cha}, this implies that $\pi$ is a {\color{black}regular relaxed equilibrium}.
\end{proof}

To properly control the entropy of the fixed-point operator $\Phi_\lambda$ in \eqref{Phi_lambda}, we need the following two assumptions.

\begin{Assumption}\label{assume.u.lips} The maps $u\mapsto f(t,i,u)$ and $u\mapsto p^u_i$ are Lipschitz, uniformly in $(t,i)$, i.e., 
	\be\label{eq.assum.lip} 
	\Theta := \sup_{t\in\N_0, i\in \Sc}\sup_{u_1,u_2\in U, u_1\neq n_2}\left\{\frac{|f(i,t,u_1)-f(i,t,u_2)|}{|u_1-u_2|}+\frac{|p^{u_1}_i-p^{u_2}_i|}{|u_1-u_2|}\right\} <\infty.
	\ee 
\end{Assumption}

We will also assume that the action space $U\subset \R^\ell$ fulfills a {\it uniform cone condition}. To properly state the condition, 
for any $\iota\in [0,\pi/2]$, we note that
$$
\Delta_\iota:= \{u=(u_1,...,u_{\ell})\in\R^{\ell} : u_1^2+...+u_{\ell-1}^2\leq \tan^2(\iota) u_{{\ell}}^2\} 
$$
is a cone with vertex, axis, and angle being $0\in\R^{d-1}$, $u_1=u_2=...=u_{{\ell}-1}=0$, and $\iota$, respectively. Now, given $u\in\R^{\ell}$, the region obtained by a rotation of $u+\Delta_\iota$ in $\R^{\ell}$ about $u$ will be called {\it a cone with vertex $u$ and angle $\iota$}. 

\begin{Assumption}\label{assume.U.cone}
	When $\ell>1$, there exists $\vartheta>0$ and $\iota\in (0, \pi/2]$ such that for any 
	$u\in U$, there is a cone with vertex $u$ and angle $\iota$ (denoted by $\operatorname{cone}(u,\iota)$) that satisfies $\left(\operatorname{cone}(u,\iota)\cap B_{\vartheta}(u)\right)\subseteq U$. When $\ell=1$, there exists $\vartheta>0$ such that for any $u\in U$, either $[u-\vartheta,u]$ or $[u, u+\vartheta]$ is contained in $U$.
\end{Assumption}

\begin{Remark}
	Assumption~\ref{assume.U.cone} states that a cone with a fixed size (determined by slant height $\vartheta$ and angle $\alpha$) can be attached to any $u\in U$ (i.e., taking $u$ as its vertex) such that the cone is contained entirely in $U$.
	This readily covers all polyhedrons and ellipsoids in $\R^{\ell}$.
\end{Remark}

We can now establish a key estimate of the entropy of the fixed-point operator $\Phi_\lambda$ in \eqref{Phi_lambda}, whose proof is relegated to Appendix~\ref{subsec:proof of lm.1}.

\begin{Lemma}\label{lm.1}
	Let Assumptions \ref{assume.u.lips} and \ref{assume.U.cone} hold. Then, 	
	\be\label{eq:supH esti} 
	\sup_{i\in\Sc}|\Hc(\Gamma_\lambda(y,i)| 
	\leq \phi(|y|), \quad \forall y \in \R^d, 
	\ee 
	where $\phi:\R_+\to \R_+$ is defined by  $\phi(z):=\kappa+\ell\ln(1+z)$, with $\kappa>0$ depending on only $\ell$, $\lambda$, $\Leb(U)$, $\iota$, $\vartheta$, and $\Theta$. 
	{\color{black} Moreover, for $\lambda>0$ small enough, \eqref{eq:supH esti} can be improved to 
		\be\label{eq:supH esti'}
		\sup_{i\in \Sc} \left|\Hc(\Gamma_\lambda(y,i))\right|\le \varphi(\lambda,y):=  \kappa_1 +  \kappa_2|\ln \lambda|+\ell\ln (1+|y|),\quad \forall y\in\R^d,
		\ee
		where $\kappa_1,\kappa_2>0$ depend on $\ell$, $\Leb(U)$, $\iota$, $\vartheta$, and $\Theta$, but not on $\lambda$. }
\end{Lemma}


Now, we are ready to present the existence of a regular relaxed equilibrium, whose proof is relegated to Appendix~\ref{subsec:proof of thm.discrete.relaxedentropy}.

\begin{Theorem}\label{thm.discrete.relaxedentropy}
	Let Assumptions \ref{assume.f.p}, \ref{assume.u.lips}, and
	\ref{assume.U.cone} hold. For any $\lambda>0$, there exists $y\in\R^d$ such that $\Psi_\lambda(y)=y$. Hence, $\Gamma_\lambda(y,\cdot)\in\Pi_r$ is a {\color{black}regular relaxed equilibrium} for \eqref{eq.relaxed.J}. 
\end{Theorem}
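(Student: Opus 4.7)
The plan is to apply Brouwer's fixed-point theorem to $\Psi_\lambda:\R^d\to\R^d$ restricted to a sufficiently large closed ball $\overline{B_R(0)}$; once a fixed point $y$ is obtained, Corollary~\ref{coro:Psi} immediately yields the relaxed equilibrium $\Gamma_\lambda(y,\cdot)\in\Pi_r$. Two ingredients must be established: (i) a radius $R>0$ with $\Psi_\lambda(\overline{B_R(0)})\subseteq\overline{B_R(0)}$, and (ii) continuity of $\Psi_\lambda$ on $\overline{B_R(0)}$.

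For (i), I would bound $\Psi_\lambda(y)=V^{\Gamma_\lambda(y,\cdot)}_\lambda$ directly from \eqref{eq.relaxed.V.lambda}. Assumption~\ref{assume.f.p} controls the reward part of the series by $M$ uniformly in $\pi$, while Lemma~\ref{lm.1} bounds each entropy term by $\phi(|y|)=\kappa+\ell\ln(1+|y|)$; combined with $\sum_{k}\delta(1+k)\le M$, this yields
$$|\Psi_\lambda(y)|\le \sqrt{d}\bigl(M+\lambda M(\kappa+\ell\ln(1+|y|))\bigr),\quad\forall y\in\R^d.$$
Since the right-hand side grows only logarithmically in $|y|$, choosing $R$ large enough makes this bound $\le R$ whenever $|y|\le R$, which gives the self-map property.

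For (ii), let $y_n\to y$ in $\overline{B_R(0)}$. Assumption~\ref{assume.u.lips} forces the exponents in \eqref{Gamma_lambda} to converge uniformly in $u\in U$, so $\Gamma_\lambda(y_n,i)(u)\to\Gamma_\lambda(y,i)(u)$ pointwise. On $\overline{B_R(0)}$ we have both the uniform upper bound from \eqref{Gamma esti} and a uniform strictly positive lower bound obtained directly from \eqref{Gamma_lambda} (the numerator is bounded below and the denominator above). Dominated convergence then gives, for every $k\in\N_0$ and $i,j\in\Sc$, the convergence of $p^{\Gamma_\lambda(y_n,\cdot)}_{ij}$ (hence of every entry of $(p^{\Gamma_\lambda(y_n,\cdot)})^{k}$), of $f^{\Gamma_\lambda(y_n,i)}(1+k,i)$, and of $\Hc(\Gamma_\lambda(y_n,i))$. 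Writing $V^{\Gamma_\lambda(y,\cdot)}_\lambda(i)$ as the double series
$$\sum_{k=0}^{\infty}\sum_{j\in\Sc}\bigl[(p^{\Gamma_\lambda(y,\cdot)})^{k}\bigr]_{ij}\Bigl(f^{\Gamma_\lambda(y,j)}(1+k,j)+\lambda\delta(1+k)\Hc(\Gamma_\lambda(y,j))\Bigr),$$
each summand converges, and Assumption~\ref{assume.f.p} together with Lemma~\ref{lm.1} (applied with $|y_n|\le R$) supplies a summable, $n$-independent majorant; a second dominated convergence argument in $k$ then delivers $\Psi_\lambda(y_n)\to\Psi_\lambda(y)$.

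Brouwer's theorem applied to $\Psi_\lambda\big|_{\overline{B_R(0)}}$ then produces $y\in\overline{B_R(0)}$ with $\Psi_\lambda(y)=y$, and Corollary~\ref{coro:Psi} concludes. The main obstacle is step (i): as $|y|\to\infty$, the Gibbs density $\Gamma_\lambda(y,\cdot)$ sharpens and its entropy can become large in magnitude, so without a quantitative bound the value function $V^{\Gamma_\lambda(y,\cdot)}_\lambda$ could grow too fast in $|y|$ for any ball to be invariant. Lemma~\ref{lm.1} — whose proof relies crucially on Assumptions~\ref{assume.u.lips} and \ref{assume.U.cone} — is exactly what pins this growth to the logarithmic rate $\phi(|y|)$, and logarithmic growth is the precise threshold at which a self-map of a sufficiently large ball becomes possible. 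Continuity in (ii) is then a relatively routine double dominated-convergence argument, modulo the uniform two-sided bounds on $\Gamma_\lambda(y,\cdot)$ over $\overline{B_R(0)}$.
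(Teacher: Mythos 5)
Your proposal is correct and follows the paper's overall strategy exactly: invariance of a large ball via the logarithmic entropy bound of Lemma~\ref{lm.1}, continuity of $\Psi_\lambda$, Brouwer, then Corollary~\ref{coro:Psi}. The self-map step (i) is essentially identical to the paper's (the paper packages the "large enough $R$" choice as $\alpha^*:=\sup\{\alpha\ge 0:\alpha\le(1+\lambda\phi(\alpha))M\}$, but the content is the same sublinear-growth observation). Where you genuinely diverge is step (ii): the paper proves continuity of $y\mapsto V_\lambda^{\Gamma_\lambda(y,\cdot)}$ probabilistically, by showing the transition matrices converge entrywise, hence the laws of the controlled chains converge weakly, and then invoking Skorokhod's representation theorem so that (by finiteness of $\Sc$) the representing paths eventually coincide, after which dominated convergence applies. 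You instead expand $V_\lambda^{\Gamma_\lambda(y,\cdot)}(i)$ as the explicit double series $\sum_k\sum_j[(p^{\Gamma_\lambda(y,\cdot)})^k]_{ij}(\cdots)$ — legitimate by Remark~\ref{rem:same law} — and run two dominated convergence arguments, using the uniform two-sided bounds on the Gibbs density over $\overline{B_R(0)}$ to control the entropy terms. For a finite state space your route is the more elementary and self-contained of the two; the paper's Skorokhod argument has the advantage that it transfers verbatim to the proof of Theorem~\ref{thm.lambdaconvergence.discrete}, where the limiting policies are only weak limits in $\Pc(U)$ (no densities) and an entrywise matrix-power computation would have to be redone with measures in place of densities anyway. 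Both are complete proofs.
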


Interestingly, as the degree of regularization tends to zero (i.e., $\lambda\downarrow 0$ in \eqref{eq.relaxed.J}), the next result shows that the values generated by the corresponding {\color{black}regular relaxed equilibria} (whose existence is guaranteed by Theorem~\ref{thm.discrete.relaxedentropy}) are uniformly bounded, thanks to the entropy estimate in Lemma~\ref{lm.1}. Its proof is relegated to Appendix~\ref{subsec:proof of lm.vn.bound}. 

\begin{Lemma}\label{lm.vn.bound}
	Let Assumptions \ref{assume.f.p}, \ref{assume.u.lips}, and \ref{assume.U.cone} hold. Given $\{\lambda_n\}_{n\in \N}$ in $(0,1]$ with $\lambda_n\downarrow 0$, consider $\{y^n\}_{n\in \N}$ in $\R^d$ such that $y^n=\Psi_{\lambda_n}(y^n)$. Then, $\sup_{n\in \N} |y^n|<\infty$. 
\end{Lemma}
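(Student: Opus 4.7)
The proof plan is to revisit the bound \eqref{eq.1'} from the proof of Theorem~\ref{thm.discrete.relaxedentropy}, but this time invoke the sharper, $\lambda$-explicit entropy estimate \eqref{supH esti'} from Remark~\ref{rem:indep. of lambda} in place of the crude bound furnished by Lemma~\ref{lm.1}. The key observation is that although the generic entropy bound in Lemma~\ref{lm.1} involves a constant $\kappa$ that depends on $\lambda$, Remark~\ref{rem:indep. of lambda} isolates this dependence to a term of the form $\kappa_2|\ln \lambda|$ with $\kappa_1,\kappa_2$ independent of $\lambda$, for $\lambda$ sufficiently small. Since $\lambda_n\downarrow 0$, we may assume (by discarding finitely many indices) that \eqref{supH esti'} applies to each $\lambda_n$.

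Given $y^n=\Psi_{\lambda_n}(y^n)=V_{\lambda_n}^{\Gamma_{\lambda_n}(y^n,\cdot)}$, I would exactly mimic the calculation in \eqref{eq.1'}, but with the entropy term estimated by \eqref{supH esti'}. Component-wise this yields, for every $i\in\Sc$,
\[
|y^n(i)|=\big|V_{\lambda_n}^{\Gamma_{\lambda_n}(y^n,\cdot)}(i)\big|\le M+\lambda_n M\bigl(\kappa_1+\kappa_2|\ln \lambda_n|+\ell\ln(1+|y^n|)\bigr),
\]
using Assumption~\ref{assume.f.p} for the reward part and $\sum_{t=0}^\infty \delta(t)\le M$ for the discounted entropy part. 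Taking the maximum over $i\in\Sc$ and using the equivalence of norms on $\R^d$, there is a constant $C_0>0$ (depending only on $d$, $M$, $\kappa_1$, $\kappa_2$, $\ell$) such that
\[
|y^n|\le C_0\bigl(1+\lambda_n+\lambda_n|\ln\lambda_n|+\lambda_n\ln(1+|y^n|)\bigr),\qquad\forall n\in\N.
\]

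Now the proof is completed by a contradiction argument. Since $\lambda_n\in(0,1]$ with $\lambda_n\downarrow 0$, the quantity $\lambda_n|\ln\lambda_n|$ is bounded. Suppose $\sup_n |y^n|=\infty$; pass to a subsequence (still indexed by $n$) with $|y^n|\to\infty$. Dividing the displayed inequality by $|y^n|$ gives
\[
1\le \frac{C_0\bigl(1+\lambda_n+\lambda_n|\ln\lambda_n|\bigr)}{|y^n|}+\frac{C_0\lambda_n\ln(1+|y^n|)}{|y^n|}.
\]
The first term tends to $0$ since its numerator is bounded while $|y^n|\to\infty$; the second term is bounded by $C_0\ln(1+|y^n|)/|y^n|$ (using $\lambda_n\le 1$), which also tends to $0$ by the sublinear growth of $\ln$. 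This contradicts the inequality $1\le \text{(something tending to }0)$, hence $\sup_n |y^n|<\infty$.

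The main obstacle is conceptual rather than computational: one must recognize that Lemma~\ref{lm.1}'s estimate is insufficient because its constant $\kappa$ blows up as $\lambda\downarrow 0$, and one must instead use the refined estimate \eqref{supH esti'} in which the $\lambda$-dependence appears only through $|\ln\lambda|$, so that $\lambda_n|\ln \lambda_n|\to 0$ saves the day. Once this is in hand, the rest is a routine contradiction argument based on the sublinear growth of $z\mapsto \ln(1+z)$.
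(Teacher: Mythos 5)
Your proposal is correct and follows essentially the same route as the paper: the key step in both is to replace the $\lambda$-dependent bound of Lemma~\ref{lm.1} by the refined estimate \eqref{supH esti'}, so that the only $\lambda$-dependence left is through $\lambda$ and $\lambda|\ln\lambda|$, both bounded on $(0,1]$, yielding a uniform sublinear bound $|y^n|\le C_0\bigl(1+\ln(1+|y^n|)\bigr)$ up to constants. The paper finishes by constructing the explicit threshold $\alpha^*:=\sup\{\alpha\ge 0:\alpha\le(1+\eta(\alpha))M\}$ as in Theorem~\ref{thm.discrete.relaxedentropy}, whereas you finish by contradiction after dividing by $|y^n|$; these are interchangeable consequences of the sublinearity of $z\mapsto\ln(1+z)$.
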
 

\subsection{Existence of Relaxed Equilibria for MDP \eqref{J^pi}}\label{subsec.discrete.noentropy}
Now, we move on to prove the existence of a relaxed equilibrium for the original MDP \eqref{J^pi}. Similarly to \eqref{eq.relaxed.V.lambda}, 
for any $\pi\in\Pi$, we introduce the auxiliary value function 
\be\label{eq.relaxed.V} 
V^\pi(i):= \E_i \bigg[ \sum_{k=0}^\infty f^{\pi\left( {\color{black}X^\pi_k} \right)}\left( 1+k,X^\pi_k \right) \bigg], \quad \forall i\in\Sc. 
\ee 
For convenience, we will commonly write $V^\pi$ as a vector in $\R^d$, i.e., $V^\pi = (V^\pi(1), V^\pi(2),...,V^\pi(d))$. 
Suppose that $u\mapsto f(t,i,u)$ and $u\mapsto p^u_i$ are continuous. As $U$ is compact,  for any $(y,i)\in\R^d\times\Sc$,
\be\label{eq.discrete.supp}  
E(y,i) := \argmax_{u\in U} \left\{ f(0,i, u)+ p^u_i\cdot y  \right\}\subseteq U
\ee
is nonempty and closed. For any $y\in\R^d$, consider the collection 
\begin{equation}\label{Gamma}
	\Gamma(y) := \left\{\pi\in\Pi : \operatorname{supp}(\pi(i))\subseteq E(y,i),\ \forall i\in \Sc \right\},
\end{equation}
where $\operatorname{supp}(\rho)$ denotes the support of $\rho\in\Dc(U)$. We can then define a set-valued operator $\Psi:\R^d\to2^{\R^d}$ by 
\begin{equation}\label{Psi}
	\Psi(y) := \{V^{\pi}:\pi\in\Gamma(y)\}\subseteq \R^d.  
\end{equation}
Moreover, we can also define a set-valued operator $\Phi:\Pi\to2^\Pi$ by 
\be\label{Phi}
\Phi(\pi) := \Gamma(V^\pi)\subseteq\Pi. 
\ee
Let us provide the following characterizations of relaxed equilibrium for \eqref{J^pi}. 

\begin{Proposition}\label{prop.chadiscrete.nonentropy}
	Let Assumption \ref{assume.f.p} hold and the maps $u\mapsto f(t,i,u)$ and $u\mapsto p^u_i$ be continuous. Then, 
	\begin{equation}\label{pi in Phi}
		\hbox{$\pi\in \Pi$ is a relaxed equilibrium}\  \iff\  \pi\in \Phi(\pi).
	\end{equation}
	Moreover, for any $y\in\R^d$, 
	\[
	\hbox{$y= V^\pi$ for some relaxed equilibrium $\pi\in\Pi$}\ \iff\ y\in\Psi(y). 
	\]
\end{Proposition}

\begin{proof}
	By the same argument in the proof of Proposition \ref{propdiscr.entropy.cha} (while ignoring the term $\lambda \Hc(\cdot)$ therein), we observe that $\pi\in\Pi$ is a relaxed equilibrium if and only if $\pi(i)\in\Pc(U)$ fulfills 
	$$
	\pi(i)\in \argmax_{\mu\in \Pc(U)} \int_U \left( f(0,i,u) + p^u_i\cdot V^\pi \right) \mu(du),\quad \forall i\in\Sc,
	$$ 
	which is equivalent to $\operatorname{supp}(\pi(i))\subseteq E(V^\pi,i)$ for all $i\in\Sc$, i.e., $\pi\in\Gamma(V^\pi)=\Phi(\pi)$. 
	
	For any $y\in \R^d$ such that $y\in \Psi(y)$, in view of \eqref{Psi}, $y=V^\pi$ for some $\pi\in \Gamma(y)$. It follows that $\pi \in \Gamma(y) = \Gamma(V^\pi)=\Phi(\pi)$. By \eqref{pi in Phi}, this implies that $\pi$ is a relaxed equilibrium. Conversely, suppose that $y=V^\pi$ for some relaxed equilibrium $\pi\in\Pi$. By \eqref{pi in Phi}, $\pi\in \Phi(\pi) = \Gamma(V^\pi) = \Gamma(y)$. With $y=V^\pi$ and $\pi\in\Gamma(y)$, we immediately conclude $y\in\Psi(y)$.
\end{proof}

With the aid of Lemma~\ref{lm.vn.bound} and Proposition~\ref{prop.chadiscrete.nonentropy}, we are ready to present the existence of a relaxed equilibrium for \eqref{J^pi}, by approximating \eqref{J^pi} using a sequence of entropy-regularized MDPs. The detailed proof is relegated to Appendix~\ref{subsec:proof of thm.lambdaconvergence.discrete}.


\begin{Theorem}\label{thm.lambdaconvergence.discrete}
	Let Assumptions \ref{assume.f.p}, \ref{assume.u.lips}, and \ref{assume.U.cone} hold. Then, a relaxed equilibrium for \eqref{J^pi} exists.
\end{Theorem}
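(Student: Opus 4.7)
The plan is to approximate \eqref{J^pi} by the entropy-regularized problems \eqref{eq.relaxed.J} with vanishing regularization and extract a limit. Pick $\lambda_n\downarrow 0$ in $(0,1]$. By Theorem~\ref{thm.discrete.relaxedentropy}, there exist $y^n\in\R^d$ with $\Psi_{\lambda_n}(y^n)=y^n$, so that $\pi^n:=\Gamma_{\lambda_n}(y^n,\cdot)\in\Pi_r$ is a relaxed equilibrium for \eqref{eq.relaxed.J} with $\lambda=\lambda_n$. Lemma~\ref{lm.vn.bound} supplies the uniform bound $\sup_n|y^n|<\infty$, so after passing to a subsequence $y^n\to y^*\in\R^d$. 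Since $U$ is compact and $\Sc$ is finite, Prokhorov's theorem together with a diagonal extraction yields $\pi^n(i)\to\pi^*(i)$ weakly in $\Pc(U)$ for every $i\in\Sc$, with $\pi^*\in\Pi$. In view of Proposition~\ref{prop.chadiscrete.nonentropy} (applicable since Assumption~\ref{assume.u.lips} gives continuity of $u\mapsto f(0,i,u)$ and $u\mapsto p^u_i$), it suffices to establish $y^*=V^{\pi^*}$ and $\operatorname{supp}(\pi^*(i))\subseteq E(y^*,i)$ for all $i\in\Sc$.

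The concentration of support is the heart of the argument and amounts to a Laplace (small-noise) principle for the Gibbs measure \eqref{Gamma_lambda}. Fix $i\in\Sc$ and an open set $O\subseteq U$ with $\overline{O}\cap E(y^*,i)=\emptyset$. By continuity of $g^*(u):=f(0,i,u)+p^u_i\cdot y^*$ and compactness of $\overline{O}$ and $U$, there exists $\eps>0$ with $g^*(u)\le \max_U g^*-2\eps$ for $u\in\overline{O}$. Assumption~\ref{assume.u.lips} together with $y^n\to y^*$ upgrades this to $g^n(u)\le \max_U g^n-\eps$ for $u\in\overline{O}$ and all sufficiently large $n$, where $g^n(u):=f(0,i,u)+p^u_i\cdot y^n$. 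The numerator of $\pi^n(i)$ restricted to $O$ is then at most $\Leb(U)\,e^{(M^n-\eps)/\lambda_n}$ with $M^n:=\max_U g^n$. For the normalizing constant, pick $\bar u^n$ attaining $M^n$; the same Lipschitz estimate used in \eqref{Lipest}, combined with the cone computation from the proof of Lemma~\ref{lm.1} that leverages Assumption~\ref{assume.U.cone}, gives a lower bound of the form $c\lambda_n^{\ell}\,e^{M^n/\lambda_n}$, where $c>0$ is uniform in $n$ thanks to the uniform bound on $|y^n|$. Therefore $\pi^n(i)(O)\le c^{-1}\Leb(U)\lambda_n^{-\ell}\,e^{-\eps/\lambda_n}\to 0$, and the Portmanteau theorem forces $\pi^*(i)(O)=0$, which yields $\operatorname{supp}(\pi^*(i))\subseteq E(y^*,i)$.

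To identify $y^*$ with $V^{\pi^*}$, I would adapt the Skorokhod-representation argument from the proof of Theorem~\ref{thm.discrete.relaxedentropy}. Weak convergence $\pi^n(i)\to\pi^*(i)$ and continuity of $u\mapsto p^u_i$ imply componentwise convergence of the averaged transition matrices $p^{\pi^n}\to p^{\pi^*}$, hence (by Remark~\ref{rem:same law}) weak convergence of the laws of $X^{\pi^n}$ to that of $X^{\pi^*}$; since $\Sc$ is finite, Skorokhod representation delivers coupled processes with $Y^n_k=Y_k$ for $n$ large enough at each $k\in\N_0$. Dominated convergence (legitimate by Assumption~\ref{assume.f.p}) then gives $V^{\pi^n}(i)\to V^{\pi^*}(i)$. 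Separately, Lemma~\ref{lm.1} applied at the bounded $y^n$ yields $\sup_n\sup_i|\Hc(\pi^n(i))|<\infty$, so
\[
\bigl|V^{\pi^n}_{\lambda_n}(i)-V^{\pi^n}(i)\bigr|\le \lambda_n\Bigl(\sum_{k\ge 0}\delta(k+1)\Bigr)\sup_i|\Hc(\pi^n(i))|\to 0.
\]
Consequently $y^n=\Psi_{\lambda_n}(y^n)=V^{\pi^n}_{\lambda_n}\to V^{\pi^*}$, and uniqueness of limits gives $y^*=V^{\pi^*}$. Combined with the support property, $\pi^*\in\Gamma(V^{\pi^*})=\Phi(\pi^*)$, so Proposition~\ref{prop.chadiscrete.nonentropy} concludes that $\pi^*$ is a relaxed equilibrium for \eqref{J^pi}. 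The principal obstacle is the Laplace step: transferring strict suboptimality at $y^*$ to a uniform-in-$u$ estimate at $y^n$ requires the Lipschitz Assumption~\ref{assume.u.lips}, while the cone Assumption~\ref{assume.U.cone} is essential for the polynomial-in-$\lambda_n$ lower bound on the partition function. Absent either, Gibbs mass could survive on $U\setminus E(y^*,i)$ in the limit.
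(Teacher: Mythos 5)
Your proposal follows essentially the same route as the paper: regularize with $\lambda_n\downarrow 0$, invoke Theorem~\ref{thm.discrete.relaxedentropy} and Lemma~\ref{lm.vn.bound} to extract convergent subsequences $y^n\to y^*$ and $\pi^n(i)\to\pi^*(i)$, prove the Laplace-type support concentration $\operatorname{supp}(\pi^*(i))\subseteq E(y^*,i)$, identify $y^*=V^{\pi^*}$ via Skorokhod representation, and close with Proposition~\ref{prop.chadiscrete.nonentropy}. The only substantive (and harmless) divergence is in the partition-function lower bound for the concentration step: you reuse the cone computation of Lemma~\ref{lm.1} to get $c\lambda_n^{\ell}e^{M^n/\lambda_n}$, whereas the paper bounds the denominator below by $\tfrac{L}{2}e^{(A-\eps/2)/\lambda_n}$ using the positive Lebesgue measure of a superlevel set of the continuous map $u\mapsto f(0,i,u)+p^u_i\cdot y^n$; both give the required $e^{-c'/\lambda_n}$ decay of $\pi^n(i)(O)$.

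One intermediate claim is wrong as stated: you assert that Lemma~\ref{lm.1} at the bounded $y^n$ yields $\sup_n\sup_i|\Hc(\pi^n(i))|<\infty$. It does not, because the constant $\kappa$ in Lemma~\ref{lm.1} depends on $\lambda$ and blows up like $|\ln\lambda_n|$ as $\lambda_n\downarrow 0$; indeed, the Gibbs densities concentrate as $\lambda_n\downarrow 0$, so their differential entropies typically diverge to $-\infty$. The conclusion you need, $\lambda_n\sup_i|\Hc(\pi^n(i))|\to 0$, is nevertheless true: by the refined estimate \eqref{supH esti'} in Remark~\ref{rem:indep. of lambda}, $\sup_i|\Hc(\Gamma_{\lambda_n}(y^n,i))|\le \kappa_1+\kappa_2|\ln\lambda_n|+\ell\ln(1+|y^n|)$ with $\kappa_1,\kappa_2$ independent of $\lambda_n$, and $\lambda_n|\ln\lambda_n|\to 0$ together with the uniform bound on $|y^n|$ finishes the step. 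This is exactly how the paper argues; with that substitution your proof is complete.
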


{\color{black}
This paper mainly takes the entropy-regularized MDP \eqref{eq.relaxed.J} as a powerful approximation tool for the original MDP \eqref{J^pi}, as shown in the proof of Theorem~\ref{thm.lambdaconvergence.discrete} (see Appendix~\ref{subsec:proof of thm.lambdaconvergence.discrete}). 
Yet, \eqref{eq.relaxed.J} does have its own meaning and application. In the context of reinforcement learning (RL), an agent does not know the model perfectly (e.g., the MDP's transition probabilities may not be precisely known). She then chooses her control actions for two different purposes---one is to enlarge her cumulative payoff based on her present knowledge of the model (i.e., ``exploitation''); the other is to obtain more information about the model based on the observed MDP evolution (i.e., ``exploration''). To enhance ``exploration,'' the agent randomizes her control actions (i.e., chooses a relaxed control) to more efficiently infer the model (from the more diverse MDP evolution), and the amount of information gained can be measured by Shannon's entropy of the randomization. This corresponds to the second term in the expectation of \eqref{eq.relaxed.J}. The chosen relaxed control also needs to serve the ``exploitation'' purpose, 
which corresponds to the first term in the expectation of \eqref{eq.relaxed.J}. 

\begin{Remark}\label{rem:RL}
For typical RL without time inconsistency (e.g., $\delta(t) = e^{-\beta t}$ for some $\beta>0$ in \eqref{eq.relaxed.J}), the agent's goal is to find a {\it relaxed control} that maximizes \eqref{eq.relaxed.J}, thereby striking a balance between ``exploitation'' and ``exploration'' (see e.g., \citet{Ziebart08, Fox16, WZZ20}).   
When $\delta$ is a general discount function, the agent also needs to tackle the issue of time inconsistency. 
That is, besides striking a balance between ``exploitation'' and ``exploration,'' she also wants to maintain the balance among all disobedient future selves. The agent then aims for a relaxed control for \eqref{eq.relaxed.J} that can be upheld by all current and future selves, i.e., a {\it regular relaxed equilibrium} in Definition~\ref{def.relaxedequi.discrete}. Theorem~\ref{thm.discrete.relaxedentropy} asserts that such a desired RL policy exists. 
\end{Remark}
}


{\color{black}
\begin{Remark}\label{rem:limit not regular}
For $\lambda>0$, let $\pi_\lambda\in\Pi_r$ be a regular relaxed equilibrium for \eqref{eq.relaxed.J}. 
{Given $i\in\Sc$, $\pi_\lambda(i)\in\Pc(U)$ admits a density function for all $\lambda>0$ (as $\pi_\lambda$ is regular; see Definition~\ref{def:rrc}). However, as $\lambda\downarrow 0$, the weak limit $\pi^*(i)$ of $\{\pi_\lambda(i)\}_{n\in\N}$ may not admit a density function. This is why in Theorem~\ref{thm.lambdaconvergence.discrete}, we get only a relaxed equilibrium,  which is not necessarily regular, for \eqref{J^pi}. }
\end{Remark}
}

{\color{black}
\begin{Remark}\label{rem:numerical}
While Theorem~\ref{thm.lambdaconvergence.discrete} is a general existence result, its proof does suggest how we can actually find a relaxed equilibrium: as the original MDP \eqref{J^pi} can be approximated by a sequence of entropy-regularized ones (indexed by $\lambda>0$), one can compute a relaxed equilibrium for each regularized problem and the limit (as $\lambda\to 0$) will be a relaxed equilibrium of the original problem. 

This method is numerically viable as it circumvents the set-valued fixed-point operator associated with \eqref{J^pi}, i.e., $\Phi$ in \eqref{Phi}. Indeed, it is difficult numerically to implement a set-valued fixed-point iteration, such that finding a relaxed equilibrium for \eqref{J^pi} directly is not easy at all. By contrast, as each regularized problem is associated with a single-valued fixed-point operator, i.e., $\Phi_\lambda$ in \eqref{Phi_lambda}, a fixed-point iteration can be implemented in a straightforward way. Certainly, to make this method fully rigorous, it remains to show the theoretic convergence of the single-valued fixed-point iteration, which is a nontrivial problem in itself and will be left for future research.
\end{Remark}
}

\section{Relaxed Equilibria in Continuous Time}\label{sec:continuous}
In this section, we take up the same setup in the first two paragraphs of Section \ref{sec:discrete}, except that the controlled process $X$ is now a continuous-time Markov chain. Specifically, each action $u\in U$ is associated with a $d\times d$ rate matrix (or, generator) $q^u$; namely, for each fixed $i\in\Sc$, $q^u_{ij}\ge 0$ for all $j\neq i$ and $q^u_{ii} = -\sum_{j\neq i} q^u_{ij}$. In addition, each $\mu\in\Pc(U)$ is associated with a $d\times d$ {\it relaxed} rate matrix $Q^\mu$, defined by $Q^\mu_{ij} := \int_U q^u_{ij} \mu(du)$ for all $i,j\in\Sc$. At any current state $i\in\Sc$, given a relaxed feedback control $\pi:\Sc\to\Pc(U)$, the dynamics of $X$ is dictated by $Q^{\pi(i)}_i$, the $i^{th}$ row of $Q^{\pi(i)}$. That is, the time until the next jump to other states is exponentially distributed with parameter $-Q^{\pi(i)}_{ii}$ and the jump will take $X$ to state $j\neq i$ with probability $-{Q^{\pi(i)}_{ij}}/{Q^{\pi(i)}_{ii}}$.

For any $\pi\in\Pi$, the corresponding value function is given by 
\begin{equation}\label{tJ^pi}
	\widetilde J^\pi(i) := \E_i\bigg[ \int_0^\infty f^{\pi(X^\pi_s)}\left(s,X^\pi_s\right) ds\bigg],\quad \forall i\in \Sc.
\end{equation}
Similarly to \eqref{eq.relaxed.J}, for any $\lambda> 0$ and $\pi\in\Pi_r$, we consider 
\be\label{eq.relaxed.tJ} 
\widetilde J^\pi_\lambda(i):= \E_i\bigg[ \int_0^\infty \left( f^{\pi(X_s)}(s,X^\pi_s) + \lambda  \delta(s)\Hc\big({\pi(X_s)}\big) \right) ds\bigg],\quad \forall i\in \Sc,
\ee 
To ensure that $\widetilde J^\pi$ and $\widetilde J^\pi_\lambda$ are well-defined, we impose the following.

\begin{Assumption}\label{assume.f.Q}
	$\widetilde M := \int_0^\infty \left(\sup_{i\in \Sc, u\in U}|f(s,i,u)|+ \delta(s) \right)ds <\infty$.
\end{Assumption}
We will commonly write $\tJ^\pi$ and $\tJ^\pi_\lambda$ as vectors in $\R^d$, i.e., 
$\tJ^\pi = (\tJ^\pi(1), \tJ^\pi(2),...,\tJ^\pi(d))$ and $\tJ^\pi_\lambda = (\tJ^\pi_\lambda(1), \tJ^\pi_\lambda(2),...,\tJ^\pi_\lambda(d))$.

Similarly to Definition~\ref{def.relaxedequi.discrete}, to formulate an equilibrium strategy in continuous time, we introduce, for any $\pi, \pi'\in \Pi$, the concatenation of $\pi'$ and $\pi$ at time $\eps>0$, denoted by $\pi'\otimes_\eps\pi\in\Pi$. Using this concatenated relaxed control means that the evolution of $X$ is governed first by $\pi'$ on the time interval $[0,\eps)$ and then by $\pi$ on $[\eps,\infty)$. 

\begin{Definition}
	Given $\lambda> 0$, we say $\pi\in\Pi_r$ is a {\color{black}regular relaxed equilibrium} (for \eqref{eq.relaxed.tJ}) if 
	\be\label{eq.def.weakrelax} 
	\limsup_{\eps\downarrow 0}\frac{\widetilde J^{\pi'\otimes_\eps \pi}_\lambda(i)- \widetilde J^{\pi}_\lambda(i)}{\eps}\leq 0,\quad \forall \pi'\in \Pi_r\ \hbox{and}\ i\in \Sc.
	\ee
	Similarly, we say $\pi\in\Pi$ is a relaxed equilibrium 
	(for \eqref{tJ^pi}) if it satisfies \eqref{eq.def.weakrelax} 
	with $\widetilde J^{\pi'\otimes_\eps \pi}_\lambda$, $\widetilde J^{\pi}_\lambda$, and $\pi'\in\Pi_r$ therein replaced by $\widetilde J^{\pi'\otimes_\eps \pi}$, $\widetilde J^{\pi}$, and $\pi'\in\Pi$, respectively. 
\end{Definition}

Similarly to \eqref{eq.relaxed.V}, for any $\pi\in\Pi$, we introduce the auxiliary value function 
\be\label{eq.relaxed.tV} 
\widetilde V^\pi(t,i):= \E_i \bigg[ \int_0^\infty f^{\pi(X_s)}(t+s,X_s) ds \bigg], \quad \forall (t,i)\in[0,\infty)\times\Sc. 
\ee 
In addition, similarly to \eqref{eq.relaxed.V.lambda}, for any $\lambda>0$ and $\pi\in\Pi_r$, we introduce the auxiliary value function 
\be\label{eq.relaxed.tV.lambda} 
\widetilde V^\pi_\lambda(t,i):= \E_i \left[ \int_0^\infty \left( f^{\pi(X_s)}(t+s,X_s) + \lambda\delta(t+s) \Hc(\pi(X_s)) \right)ds \right],\quad \forall (t,i)\in[0,\infty)\times\Sc.
\ee
For convenience, we will commonly write $\tV^\pi_\lambda(t)$ as a vector in $\R^d$, i.e., 
\[
\widetilde V^\pi_\lambda(t) = (\tV^\pi_\lambda(t,1), \tV^\pi_\lambda(t,2),...,\tV^\pi_\lambda(t,d))\in\R^d.
\]
We will write $\tV^\pi$ as a vector in $\R^d$ in the same manner.  

\subsection{Existence of {\color{black}Regular Relaxed Equilibria} for Entropy-Regularized MDP \eqref{eq.relaxed.tJ}}\label{subsec.cts.entropy}
\begin{Lemma}\label{lm.expansion}
	Let Assumption \ref{assume.f.Q} hold and $f(\cdot,i,u)$ and $\delta(\cdot)$ be continuous on $[0,\infty)$. Given $\lambda>0$, it holds for all $i\in\Sc$ and $\pi,\pi'\in\Pi_r$ that 
	\be\label{eq.lm.dis1} 
	\tJ^{\pi'\otimes_{\eps}\pi}_\lambda(i) = \tV^\pi_\lambda(\eps,i) + \left(f^{\pi'(i)}(0,i)+\lambda \Hc(\pi'(i))+Q^{\pi'(i)}_i\cdot \tV^\pi_\lambda(\eps) \right)\eps +o(\eps),\quad \hbox{as $\eps\downarrow 0$}. 
	\ee
	Similarly,  it holds for all $i\in\Sc$ and $\pi,\pi'\in\Pi$ that 
	\be\label{eq.lm.dis1'} 
	\tJ^{\pi'\otimes_{\eps}\pi}(i) = \tV^\pi(\eps,i) + \left(f^{\pi'(i)}(0,i)+Q^{\pi'(i)}_i\cdot \tV^\pi(\eps) \right)\eps +o(\eps),\quad \hbox{as $\eps\downarrow 0$}. 
	\ee
\end{Lemma}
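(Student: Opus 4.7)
The plan is to split the integral defining $\tJ^{\pi'\otimes_\eps\pi}_\lambda(i)$ at time $\eps$ and expand each piece to first order in $\eps$. The early piece $[0,\eps)$ runs under $\pi'$ and should produce the ``drift'' term $\eps\bigl(f^{\pi'(i)}(0,i)+\lambda\Hc(\pi'(i))\bigr)+o(\eps)$; the tail piece $[\eps,\infty)$ reduces, via the strong Markov property applied at the deterministic time $\eps$, to $\E_i[\tV^\pi_\lambda(\eps,X_\eps)]$, which I would then expand using the standard small-time asymptotics of the transition probabilities of the continuous-time Markov chain generated by $Q^{\pi'(i)}$.

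For the early piece, I would write $F^\pi(s,x):=f^{\pi(x)}(s,x)+\lambda\delta(s)\Hc(\pi(x))$ and observe that, under $\pi'$, the process remains at state $i$ throughout $[0,\eps)$ except with probability $1-e^{-|Q^{\pi'(i)}_{ii}|\eps}=O(\eps)$. On the no-jump event, $\int_0^\eps F^{\pi'}(s,X_s)\,ds=\int_0^\eps F^{\pi'}(s,i)\,ds$, and continuity of $f(\cdot,i,u)$ and $\delta(\cdot)$ at $0$, combined with $\delta(0)=1$ (Remark~\ref{rem:delta}), delivers $\int_0^\eps F^{\pi'}(s,i)\,ds=\eps\bigl(f^{\pi'(i)}(0,i)+\lambda\Hc(\pi'(i))\bigr)+o(\eps)$. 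On the complementary ``jump'' event, the integrand is bounded uniformly in $s\in[0,\eps)$ (by Assumption~\ref{assume.f.Q} for $f$, and by \eqref{H finite} together with $\delta\le 1$ for the entropy term), so its contribution to the expectation is $O(\eps)\cdot O(\eps)=o(\eps)$.

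For the tail piece, the strong Markov property at $\eps$ gives $\E_i\bigl[\int_\eps^\infty F^\pi(s,X_s)\,ds\bigr]=\E_i[\tV^\pi_\lambda(\eps,X_\eps)]$. The standard small-time expansion $\P_i(X_\eps=i)=1+Q^{\pi'(i)}_{ii}\eps+o(\eps)$ and $\P_i(X_\eps=j)=Q^{\pi'(i)}_{ij}\eps+o(\eps)$ for $j\neq i$, combined with uniform boundedness of $\tV^\pi_\lambda(\eps,\cdot)$ in $\eps$ (which follows from Assumption~\ref{assume.f.Q} and \eqref{H finite}), then yields $\E_i[\tV^\pi_\lambda(\eps,X_\eps)]=\tV^\pi_\lambda(\eps,i)+\eps\,Q^{\pi'(i)}_i\cdot\tV^\pi_\lambda(\eps)+o(\eps)$. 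Adding the two expansions produces \eqref{eq.lm.dis1}; dropping the entropy term (i.e., setting $\lambda=0$) and taking $\pi,\pi'\in\Pi$ gives \eqref{eq.lm.dis1'} by the identical argument.

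The main obstacle I expect is the careful bookkeeping needed to ensure that every error term is genuinely $o(\eps)$: the jump event's $O(\eps^2)$ contribution, and the $o(\eps)$ remainder in $\E_i[\tV^\pi_\lambda(\eps,X_\eps)]$ after multiplication by the bounded vector $\tV^\pi_\lambda(\eps)$, both hinge on the uniform bound $\sup_{t\ge 0,\,j\in\Sc}|\tV^\pi_\lambda(t,j)|<\infty$ provided by Assumption~\ref{assume.f.Q} and \eqref{H finite}. Nothing here is deep, but the entropy finiteness \eqref{H finite}, which restricts the argument for \eqref{eq.lm.dis1} to $\pi,\pi'\in\Pi_r$, enters essentially; the analogous statement \eqref{eq.lm.dis1'} is purely an expansion of a bounded-reward value function and needs no such restriction.
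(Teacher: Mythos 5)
Your proposal is correct and follows the standard route — split at $\eps$, isolate the no-jump event on $[0,\eps)$ for the running-reward term, and apply the Markov property at $\eps$ together with the first-order expansion of $e^{\eps Q^{\pi'}}$ for the tail — which is exactly the argument of \cite[Lemma 1]{HZ21} that the paper invokes without reproducing. The uniform bound $\sup_{t,j}|\tV^\pi_\lambda(t,j)|\le \widetilde M\,(1+\lambda\max_j|\Hc(\pi(j))|)$ that you identify as the crux of the $o(\eps)$ bookkeeping is indeed what Assumption \ref{assume.f.Q} and \eqref{H finite} are for.
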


\begin{proof}
	The result follows from a similar argument in \citet[Lemma 1]{HZ21}. 
\end{proof}

Based on Lemma~\ref{lm.expansion}, we can now generalize the fixed-point characterization in Proposition~\ref{propdiscr.entropy.cha} to continuous time. Consider a functional $\widetilde\Gamma_\lambda: \R^d\times\Sc\to \Dc(U)$ defined by
\begin{equation}\label{tGamma_lambda}
	u\mapsto\widetilde\Gamma_\lambda(y,i) (u) : = \frac{e^{\frac{1}{\lambda} \left(f(0,i,u) +q^u_i\cdot y\right)} }{\int_{U} e^{\frac{1}{\lambda} \left(f(0,i,v) +q^v_i\cdot y\right)} dv}\in \Dc(U),\quad \forall (y,i)\in \R^d\times\Sc.
\end{equation}
It follows that $\widetilde\Gamma_\lambda(y,\cdot)\in\Pi_r$ for all $y\in\R^d$. We can then define an operator $\widetilde\Psi_\lambda:\R^d\to\R^d$ by 
\[
\widetilde\Psi_\lambda(y) := 
\tJ^{\widetilde\Gamma_\lambda (y,\cdot)}_\lambda. 
\]
Moreover, for any $\pi\in\Pi_r$, as $\widetilde\Gamma_\lambda(\tJ^\pi_\lambda, \cdot)\in\Pi_r$,  
we can define an operator $\widetilde\Phi_\lambda:\Pi_r\to\Pi_r$ by 
\[
\widetilde\Phi_\lambda(\pi) := \widetilde\Gamma_\lambda\big(\tJ^\pi_\lambda,\cdot\big). 
\]

\begin{Proposition}\label{propconti.entropy.cha}
	Let Assumption \ref{assume.f.Q} hold and $f(\cdot,i,u)$ and $\delta(\cdot)$ be continuous on $[0,\infty)$.  For any $\lambda>0$, 
	\[
	\hbox{$\pi\in \Pi_r$ is a {\color{black}regular relaxed equilibrium}}\  \iff\  \widetilde\Phi_\lambda(\pi)=\pi.
	\]
\end{Proposition}
The proof of Proposition~\ref{propconti.entropy.cha} is relegated to Appendix~\ref{subsec:proof of propconti.entropy.cha}.

\begin{Corollary}\label{coro:Psi'}
	Let Assumption \ref{assume.f.Q} hold and suppose that $f(\cdot,i,u)$ and $\delta(\cdot)$ are continuous on $[0,\infty)$. Given $\lambda>0$, it holds for all $y\in\R^d$, 
	\[
	\hbox{$y= \tJ^\pi_\lambda$ for some {\color{black}regular relaxed equilibrium} $\pi\in\Pi_r$}\ \iff\ \widetilde\Psi_\lambda(y)=y. 
	\]
	In particular, $\widetilde\Psi_\lambda(y)=y$ implies that $\widetilde\Gamma_\lambda(y,\cdot)\in\Pi_r$ is a {\color{black}regular relaxed equilibrium} for \eqref{eq.relaxed.tJ}. 
\end{Corollary}

\begin{proof}
	If $y=\tJ^\pi_\lambda$ for a {\color{black}regular relaxed equilibrium} $\pi\in\Pi_r$, then by Proposition~\ref{propconti.entropy.cha}, $\pi=\widetilde\Phi_\lambda(\pi)= \widetilde\Gamma_\lambda(\tJ^{\pi}_\lambda,\cdot)=\widetilde\Gamma_\lambda(y,\cdot)$, which implies $y=\tJ^\pi_\lambda=\tJ^{\widetilde\Gamma_\lambda(y,\cdot)}_\lambda=\widetilde\Psi_\lambda(y)$. Conversely, if $y= \widetilde\Psi_\lambda(y)=\tJ^{\widetilde\Gamma_\lambda(y,\cdot)}_\lambda$, set $\pi:= \widetilde\Gamma_\lambda(y,\cdot)\in\Pi_r$. Then, we have $y=\tJ^\pi_\lambda$ and thus $\pi = \widetilde\Gamma_\lambda(y,\cdot)=\widetilde\Gamma_\lambda(\tJ^{\pi}_\lambda,\cdot)=\widetilde\Phi_\lambda(\pi)$. By Proposition~\ref{propconti.entropy.cha}, this implies that $\pi$ is a {\color{black}regular relaxed equilibrium}.
\end{proof}

By a similar argument in the proof of Theorem \ref{thm.discrete.relaxedentropy} and using Corollary~\ref{coro:Psi'}, we can establish the existence of {\color{black}regular relaxed equilibria} for \eqref{eq.relaxed.tJ}. 

\begin{Theorem}\label{thm1}
	Let Assumptions \ref{assume.f.Q}, \ref{assume.u.lips} (with $p^u_i$ therein replaced by $q^u_i$), and \ref{assume.U.cone} hold. Suppose that $f(\cdot,i,u)$ and $\delta(\cdot)$ are continuous on $[0,\infty)$. For any $\lambda>0$, there exists $y\in\R^d$ such that $\widetilde\Psi_\lambda(y)=y$. Hence, $\widetilde\Gamma_\lambda(y,\cdot)\in\Pi_r$ is a {\color{black}regular relaxed equilibrium} for \eqref{eq.relaxed.tJ}. 
\end{Theorem}

\subsection{Existence of Relaxed Equilibrium for MDP \eqref{tJ^pi}}\label{subsec.cts.noentropy} 
Let us now move on to prove the existence of a relaxed equilibrium for \eqref{tJ^pi}. 
Suppose that $u\mapsto f(t,i,u)$ and $u\mapsto q^u_i$ are continuous. As $U$ is compact,  for any $(y,i)\in\R^d\times\Sc$,
\be\label{eq.conti.supp}  
\widetilde E(y,i) := \argmax_{u\in U} \{f(0,i, u)+ q^u_i\cdot y \}\subseteq U
\ee
is nonempty and closed. For any $y\in\R^d$, consider the collection 
\begin{equation}\label{tGamma}
	\widetilde\Gamma(y) := \left\{\pi\in\Pi : \operatorname{supp}(\pi(i))\subseteq E(y,i),\ \forall i\in \Sc \right\},
\end{equation}
where $\operatorname{supp}(\rho)$ denotes the support of $\rho\in\Dc(U)$. We can then define a set-valued operator $\widetilde \Psi:\R^d\to2^{\R^d}$ by 
\begin{equation}\label{tPsi}
	\widetilde\Psi(y) := \big\{\tJ^{\pi}:\pi\in\widetilde\Gamma(y)\big\}\subseteq \R^d.  
\end{equation}
Moreover, we can also define a set-valued operator $\widetilde\Phi:\Pi\to2^\Pi$ by 
\[
\widetilde\Phi(\pi) :=\widetilde\Gamma(\tJ^\pi)\subseteq\Pi. 
\]
We can then generalize the fixed-point characterizations in Proposition \ref{prop.chadiscrete.nonentropy} to continuous time.

\begin{Proposition}\label{prop.nonentropy.cha}
	Let Assumption \ref{assume.f.Q} hold and the maps $u\mapsto f(t,i,u)$ and $u\mapsto q^u_i$ be continuous. Then, 
	\begin{equation}\label{pi in tPhi}
		\hbox{$\pi\in \Pi$ is a relaxed equilibrium}\  \iff\  \pi\in \widetilde\Phi(\pi).
	\end{equation}
	Moreover, for any $y\in\R^d$, 
	\[
	\hbox{$y= \tJ^\pi$ for some relaxed equilibrium $\pi\in\Pi$}\ \iff\ y\in\widetilde\Psi(y). 
	\]
\end{Proposition}

\begin{proof}
	By the same argument in the proof of Proposition \ref{propconti.entropy.cha} (except that now we use \eqref{eq.lm.dis1'} instead of \eqref{eq.lm.dis1}), we observe that $\pi\in\Pi$ is a relaxed equilibrium if and only if $\pi(i)\in\Pc(U)$ fulfills 
	$$
	\pi(i)\in \argmax_{\mu\in \Pc(U)} \int_U \left( f(0,i,u) + q^u_i\cdot \tJ^\pi \right) \mu(du),\quad \forall i\in\Sc,
	$$ 
	which is equivalent to $\operatorname{supp}(\pi(i))\subseteq \widetilde E(\tJ^\pi,i)$ for all $i\in\Sc$, i.e., $\pi\in\widetilde\Gamma(\tJ^\pi)=\widetilde\Phi(\pi)$. 
	
	For any $y\in \R^d$ such that $y\in \widetilde \Psi(y)$, in view of \eqref{tPsi}, $y=\tJ^\pi$ for some $\pi\in \widetilde\Gamma(y)$. It follows that $\pi \in \widetilde\Gamma(y) = \widetilde\Gamma(\tJ^\pi)=\widetilde\Phi(\pi)$. By \eqref{pi in Phi}, this implies that $\pi$ is a relaxed equilibrium. Conversely, suppose that $y=\tJ^\pi$ for some relaxed equilibrium $\pi\in\Pi$. By \eqref{pi in Phi}, $\pi\in \widetilde\Phi(\pi) = \widetilde\Gamma(\tJ^\pi) = \widetilde\Gamma(y)$. With $y=\tJ^\pi$ and $\pi\in\widetilde\Gamma(y)$, we immediately conclude $y\in\widetilde\Psi(y)$.
\end{proof}

Given an arbitrary sequence $\lambda_n\to 0+$ with $v^n=\widetilde{\Psi}_{\lambda_n}(v^n)$ for all $n\in \N$, i.e., $\widetilde{\Gamma}_{\lambda_n}(v)$ is a relaxed equilibrium under $\lambda_n$, the next theorem follows from similar arguments in Theorem \ref{thm.lambdaconvergence.discrete}.

\begin{Theorem}\label{thm.lambdaconvergence.continuous}
	Let Assumptions \ref{assume.f.Q}, \ref{assume.u.lips} (with $p^u_i$ therein replaced by $q^u_i$), and \ref{assume.U.cone} hold. Then, a relaxed equilibrium for \eqref{tJ^pi} exists.
\end{Theorem}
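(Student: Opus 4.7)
The plan is to replicate the approximation strategy of Theorem~\ref{thm.lambdaconvergence.discrete} in the continuous-time setting, using Theorem~\ref{thm1} as the source of approximating equilibria and Proposition~\ref{prop.nonentropy.cha} as the target characterization. First, pick a sequence $\{\lambda_n\}_{n\in\N}\subset(0,1]$ with $\lambda_n\downarrow 0$. By Theorem~\ref{thm1}, for each $n$ there exists $y^n\in\R^d$ with $\widetilde\Psi_{\lambda_n}(y^n)=y^n$, so that $\pi^n:=\widetilde\Gamma_{\lambda_n}(y^n,\cdot)\in\Pi_r$ is a relaxed equilibrium for \eqref{eq.relaxed.tJ}. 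The first real task is the continuous-time analog of Lemma~\ref{lm.vn.bound}: establish $\sup_{n}|y^n|<\infty$. For this, Lemma~\ref{lm.1} applies with $p^u_i$ replaced by $q^u_i$, and Remark~\ref{rem:indep. of lambda} yields the $\lambda$-uniform bound $\sup_{i\in\Sc}|\Hc(\widetilde\Gamma_\lambda(y,i))|\le \kappa_1+\kappa_2|\ln\lambda|+\ell\ln(1+|y|)$ for $\lambda$ small. Plugging this into \eqref{eq.relaxed.tJ} together with $\int_0^\infty \delta(s)\,ds<\infty$ (Assumption~\ref{assume.f.Q}) gives $|\widetilde\Psi_\lambda(y)|\le (1+\eta(|y|))\widetilde M$ uniformly in $\lambda\in(0,1]$, with $\eta(z)=K(1+\ln(1+z))$ sublinear. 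The same fixed-point-theoretic argument as in the proof of Lemma~\ref{lm.vn.bound} then forces $\sup_n|y^n|\le \alpha^*<\infty$.

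By compactness in $\R^d$ and weak compactness of $\Pc(U)$, passing to a subsequence yields $y^n\to y^\infty\in\R^d$ and $\pi^n(i)\to\pi^*(i)$ weakly in $\Pc(U)$ for each $i\in\Sc$, defining $\pi^*\in\Pi$. To conclude via Proposition~\ref{prop.nonentropy.cha}, it suffices to show $y^\infty\in\widetilde\Psi(y^\infty)$, which unfolds into two claims: (i) $\pi^*\in\widetilde\Gamma(y^\infty)$, i.e.\ $\operatorname{supp}(\pi^*(i))\subseteq \widetilde E(y^\infty,i)$ for all $i$, and (ii) $\widetilde J^{\pi^n}_{\lambda_n}\to \widetilde J^{\pi^*}$, since then $y^\infty=\lim_n y^n=\lim_n \widetilde J^{\pi^n}_{\lambda_n}=\widetilde J^{\pi^*}\in\widetilde\Psi(y^\infty)$. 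Claim (i) is settled by the bump-function argument from the proof of Theorem~\ref{thm.lambdaconvergence.discrete}: the Gibbs density \eqref{tGamma_lambda} has the identical structure as \eqref{Gamma_lambda} with $q^u_i$ in place of $p^u_i$, so dividing numerator and denominator by $e^{(A-\eps/2)/\lambda_n}$ generates the factor $e^{-\eps/(2\lambda_n)}\to 0$ that kills the mass outside $\widetilde E(y^\infty,i)$.

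The main obstacle is claim (ii), since we now work with continuous-time Markov chains on an infinite horizon rather than with a discrete-time state process where $Y^n_k=Y_k$ eventually. The plan is the following. Componentwise convergence of relaxed generators, $Q^{\pi^n(i)}_{ij}=\int_U q^u_{ij}\pi^n(i)(du)\to\int_U q^u_{ij}\pi^*(i)(du)=Q^{\pi^*(i)}_{ij}$, follows from weak convergence of $\pi^n(i)$ and continuity of $u\mapsto q^u_{ij}$. Classical results for continuous-time Markov chains then give $X^{\pi^n}\Rightarrow X^{\pi^*}$ in the Skorokhod space $D([0,\infty),\Sc)$. By Skorokhod's representation theorem one can realize these processes on a common probability space with a.s.\ convergence of paths in $D([0,\infty),\Sc)$; since $\Sc$ is finite, at almost every $s\in[0,\infty)$ outside the countable jump set, the realized states agree eventually in $n$. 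Dominated convergence then handles the reward integral $\E[\int_0^\infty f^{\pi^n(X_s)}(s,X_s)\,ds]$ using the integrable majorant $s\mapsto\sup_{i,u}|f(s,i,u)|$ from Assumption~\ref{assume.f.Q}, together with the already established weak convergence $\pi^n(i)\to\pi^*(i)$ and continuity of $u\mapsto f(s,i,u)$. The entropy contribution vanishes in the limit because $\lambda_n\sup_i|\Hc(\pi^n(i))|\le \lambda_n(\kappa_1+\kappa_2|\ln\lambda_n|+\ell\ln(1+\sup_n|y^n|))\to 0$, combined with $\int_0^\infty\delta(s)\,ds<\infty$. Having obtained both claims, Proposition~\ref{prop.nonentropy.cha} yields that $\pi^*$ is a relaxed equilibrium for \eqref{tJ^pi}. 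The technical crux is justifying the Skorokhod-space convergence $X^{\pi^n}\Rightarrow X^{\pi^*}$ and showing that the infinite-horizon reward functional is continuous at the limit along this convergence, since the discrete-time shortcut (eventual equality of states at each fixed time) is unavailable.
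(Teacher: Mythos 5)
Your proposal is correct and follows essentially the same route as the paper, which itself only states that Theorem~\ref{thm.lambdaconvergence.continuous} follows ``by similar arguments'' to Theorem~\ref{thm.lambdaconvergence.discrete} with Proposition~\ref{prop.nonentropy.cha} in place of Proposition~\ref{prop.chadiscrete.nonentropy}; you have fleshed out exactly that argument (uniform bound on the fixed points, subsequence extraction, bump-function support argument, and convergence of the value functionals). The one step you flag as the technical crux---weak convergence of the continuous-time chains and continuity of the infinite-horizon reward functional---is precisely what the paper delegates to the argument of \cite[Theorem 3]{HZ21}, as it does for the term $I_3^n$ in the proof of Lemma~\ref{lm.sec3.1}, so your outline matches the intended proof.
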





\section{Discussion: When Do We Need Relaxed Equilibria?}\label{subsec:relaxed needed}
\subsection{The Discrete-time Case}\label{subsec:discrete}
Let us consider the discrete-time setup in Section~\ref{sec:discrete}. We first draw a detailed comparison between Theorem~\ref{thm.lambdaconvergence.discrete} and \citet[Theorem 4]{HZ21}. 

{\color{black}
\begin{Remark}\label{rem:more general p^u}
\citet{HZ21} consider the special case where $d=\ell$ and
\be\label{eq.delta} 
p^u_i := u_i\quad \forall i\in \Sc,\quad \text{with}\ u_i\in U\subseteq \{ y\in \R^d : y\ge 0,\ y_1+y_2+...+y_d=1\}.
\ee 
This essentially states that one can directly ``decide'' (rather than just ``influence'') the transition probabilities. By contrast, Theorem~\ref{thm.lambdaconvergence.discrete} only requires the map $u\mapsto p^u_i$ to be Lipschitz (Assumption~\ref{assume.u.lips}), without specifying any specific form of it. That is, Theorem~\ref{thm.lambdaconvergence.discrete} allows for more general (and potentially more realistic) dependence of transition probabilities on the control action $u\in U$. 
\end{Remark}
}

\begin{Remark}\label{rem:relaxed needed}
	By \cite[Theorem 4]{HZ21}, a standard equilibrium for \eqref{J^pi} exists, provided that {\color{black}(i) \eqref{eq.delta} holds, (ii) $U$ is convex, and (iii) $f(0,i,\cdot)$ is concave for all $i\in\Sc$. If one of the conditions fails, it is unclear whether standard equilibria exist. 
	
	In particular, when (iii) fails, \cite[Remark 10]{HZ21} shows that standard equilibria do not exist in a concrete example, where $\Sc=\{1,2\}$, $\delta(\cdot)$ is a quasi-hyperbolic discount function, $f(t,i,u)$ is of the form \eqref{discount problem} with $g(i,u)$  bounded and continuous (but non-concave) in $u$, and 
$p^{u}_i := u_i$ $\forall i\in \{1,2\}$ with $u_i\in U= \{ y\in \R^2: y\geq 0, y_1+y_2=1\}$.
This simple setup allows us to verify Assumptions \ref{assume.f.p}, \ref{assume.u.lips}, and \ref{assume.U.cone} immediately. Hence, although there is no standard equilibrium, a relaxed equilibrium exists in this example by Theorem~\ref{thm.lambdaconvergence.discrete}. }
\end{Remark}

{\color{black}
Remarks~\ref{rem:more general p^u} and \ref{rem:relaxed needed} show the importance of Theorem~\ref{thm.lambdaconvergence.discrete}: unlike standard equilibria, a relaxed equilibrium for \eqref{J^pi} exists much more generally, without the need of conditions (i), (ii), and (iii) in Remark \ref{rem:relaxed needed}. 
	Hence, in the face of general controlled transition probabilities (beyond the special form \eqref{eq.delta}) and non-concave reward functions, relaxed equilibria should certainly be considered.  
}


{\color{black}
After learning that relaxed equilibria are needed in cases where standard equilibria may not exist, let us now investigate the other side of the picture: for cases where both standard and relaxed equilibria are known to exist, does the consideration of relaxed equilibria provide any added value?
As shown in the next result, 
it is {\it not} necessary to consider relaxed equilibria in such a case, as any value achieved by a relaxed equilibrium can be recovered by a suitable standard equilibrium. 
}


\begin{Proposition}\label{prop.convex.nonentropy'} 
	Assume \eqref{eq.delta} and that $U$ therein is convex. Suppose that $f(t,i,u)$ takes the form \eqref{discount problem}, with $g(i, \cdot)$ therein concave for all $i\in \Sc$. Also, let Assumptions \ref{assume.f.p}, \ref{assume.u.lips}, and \ref{assume.U.cone} hold.  Then, 
	for any relaxed equilibrium $\pi\in\Pi$ for \eqref{J^pi}, $\alpha^\pi:\Sc\to \R^d$ defined by $\alpha^\pi (i):= \int_U u (\pi(i))(du)$, $\forall i\in\Sc$, is a standard equilibrium such that $J^{\alpha^\pi}(\cdot)=J^\pi(\cdot)$. 
\end{Proposition}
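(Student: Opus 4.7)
The plan is to exploit the fixed-point characterization from Proposition~\ref{prop.chadiscrete.nonentropy} together with the specific structure \eqref{eq.delta} and the concavity of $g(i,\cdot)$. Since $p^u_i = u$ and $f(0,i,u) = \delta(0) g(i,u) = g(i,u)$, the set
$E(V^\pi, i) = \argmax_{u \in U}\{g(i,u) + u \cdot V^\pi\}$
is the argmax over the convex set $U$ of the concave function $h_i(u) := g(i,u) + u \cdot V^\pi$, so $E(V^\pi, i)$ is itself convex. Crucially, because $h_i$ attains its maximum value $C_i$ identically on $E(V^\pi, i)$, one has $g(i,u) = C_i - u \cdot V^\pi$ for every $u\in E(V^\pi, i)$; that is, $g(i,\cdot)$ coincides with an affine function on the argmax set.

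First I would check that $\alpha^\pi$ is a well-defined standard feedback control: by convexity of $U$, the mean $\alpha^\pi(i) = \int_U u\, (\pi(i))(du)$ lies in $U$. Next, since $\pi$ is a relaxed equilibrium, Proposition~\ref{prop.chadiscrete.nonentropy} gives $\operatorname{supp}(\pi(i))\subseteq E(V^\pi,i)$, so $\alpha^\pi(i)$ actually lies in the convex set $E(V^\pi,i)$. Integrating the affine identity $g(i,u) = C_i - u\cdot V^\pi$ against $\pi(i)$ and then evaluating at $u = \alpha^\pi(i)$ yields the key Jensen equality
\begin{equation*}
\int_U g(i,u) (\pi(i))(du) = C_i - \alpha^\pi(i)\cdot V^\pi = g(i,\alpha^\pi(i)), \quad \forall i\in\Sc.
\end{equation*}
At the same time, \eqref{eq.delta} gives $p^{\alpha^\pi(i)}_{ij} = (\alpha^\pi(i))_j = \int_U u_j (\pi(i))(du) = \int_U p^u_{ij} (\pi(i))(du)$, so the transition matrices induced by $\pi$ and by $\alpha^\pi$ coincide, and Remark~\ref{rem:same law} shows that $X^\pi$ and $X^{\alpha^\pi}$ share the same law starting from any state. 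Combining these two facts pathwise and taking expectations yields $J^\pi(j) = J^{\alpha^\pi}(j)$, and the parallel computation with the time-shifted reward gives $V^\pi(j) = V^{\alpha^\pi}(j)$ for every $j\in\Sc$.

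Finally, to see that $\alpha^\pi$ is a standard equilibrium, I would apply the relaxed equilibrium inequality of $\pi$ against deterministic deviations: for any standard control $\alpha'\in\Pi$ (viewed as a Dirac-valued element of $\Pi$), decomposing $J^{\alpha'\otimes_1 \alpha^\pi}(i) = f(0,i,\alpha'(i)) + \E_i[V^{\alpha^\pi}(X^{\alpha'}_1)]$ and using $V^{\alpha^\pi} = V^\pi$ gives $J^{\alpha'\otimes_1\alpha^\pi}(i) = J^{\alpha'\otimes_1 \pi}(i) \leq J^\pi(i) = J^{\alpha^\pi}(i)$, which is exactly \eqref{eq.def.equidiscrete'} for $\alpha^\pi$. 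The one nontrivial point in all of this is recognizing that the concavity assumption forces $g(i,\cdot)$ to be \emph{affine} on the argmax set $E(V^\pi,i)$, thereby converting Jensen's inequality into an equality; this is what allows one to ``round'' the randomized equilibrium $\pi$ to the deterministic standard equilibrium $\alpha^\pi$ without losing any value.
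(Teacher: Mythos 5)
Your proof is correct and follows essentially the same route as the paper: the key step in both is that concavity of $g(i,\cdot)$ forces it to be affine on the argmax set $E(V^\pi,i)$, so that Jensen's inequality becomes an equality and the barycenter $\alpha^\pi(i)$ (which lies in the convex set $E(V^\pi,i)$) reproduces the averaged reward. You additionally spell out two points the paper leaves implicit---that $X^\pi$ and $X^{\alpha^\pi}$ share the same law via Remark~\ref{rem:same law}, and the explicit verification that $\alpha^\pi$ satisfies the equilibrium inequality using $V^{\alpha^\pi}=V^\pi$---which is a welcome completion rather than a deviation.
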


\begin{proof}
	Note that the convexity of $U$ ensures that $\alpha^\pi (i)\in U$ for all $i\in\Sc$. As $\pi\in\Pi$ is a relaxed equilibrium for \eqref{J^pi}, by \eqref{pi in Phi}, 
	\be\label{E special}
	\operatorname{supp}(\pi(i))\subseteq E(V^\pi,i) = \argmax_{u\in {U}} \{ g(i,u) + u \cdot V^{\pi} \},\quad\forall i\in\Sc, 
	\ee
	where the equality follows from \eqref{eq.discrete.supp} and \eqref{eq.delta}. Given $i\in\Sc$, if $E(V^\pi,i)$ is a singleton, \eqref{E special} implies that $\pi(i)\in\Pc(U)$ concentrates on the unique element in $E(V^\pi,i)$, which coincides with $\alpha^\pi$ by definition. Hence, $f^{\pi(i)}(t,i) = f^{\alpha^\pi}(t,i)$ trivially holds for all $t\in\N_0$. If $E(V^\pi,i)$ is not a singleton, in view of its form in \eqref{E special}, $u\mapsto g(i,u)$ must be linear on $E(V^\pi,i)$. It follows that
	\[
	f^{\pi(i)}(t,i) = \int_U \delta(t)g(i,u) (\pi(i))(du) = \delta(t) g(i, \alpha^\pi) = f^{\alpha^\pi}(t,i),\quad\forall t\in\N_0. 
	\]
	With $f^{\pi(i)}(t,i) = f^{\alpha^\pi}(t,i)$ for all $t\in\N_0$ and $i\in\Sc$, we conclude from \eqref{J^pi} that $J^{\alpha^\pi}(\cdot)=J^\pi(\cdot)$.  
\end{proof}

{\color{black}
It is interesting to note that \citet{JN21} also establish results in the same spirit as Proposition~\ref{prop.convex.nonentropy'}, under a specific form of discounting. 

\begin{Remark}
	Under quasi-hyperbolic discounting, \cite[Theorem 3.4]{JN21} asserts the existence of a relaxed equilibrium $\pi$ that is almost a standard one: for each state $i$, the support of the probability $\pi(i)$ contains at most two points in the action space. Moreover, \cite[Theorem 3.5]{JN21} shows that additional ``atomless'' assumptions can further reduce the support of $\pi(i)$ to contain only one point, i.e., the relaxed equilibrium reduces to a standard one. As opposed to Proposition~\ref{prop.convex.nonentropy'}, these results hold for fairly general reward functions (without the need of concavity) and transition probabilities (without the need of linear dependence on $u$).
		
		Such generality, however, hinges crucially on quasi-hyperbolic discounting. In a nutshell, by using the structure of quasi-hyperbolic discounting, the original time-inconsistent problem can be expressed as a functional of an auxiliary time-consistent problem (under exponential discounting); see \cite[(2.3)-(2.4)]{JN21}. A detailed analysis is then performed on the time-consistent problem, which contributes to proving \cite[Theorems 3.4 and 3.5]{JN21}; see \cite[Section 7]{JN21}. This method, quite specific to quasi-hyperbolic discounting, cannot be easily extended to the case of a general discount function. 
\end{Remark}
}

Finally, we would like to point out that the need of relaxed controls 
is unique to the case of time inconsistency. When the problem $\sup_{\pi\in\Pi} J^\pi(i)$ is time-consistent (e.g., under exponential discounting), one can consider without loss of generality only standard controls. 

\begin{Remark}\label{rm.discrete}
	Let $f(t,i,u)$ takes the form \eqref{discount problem} with $\delta(t) = e^{-\beta t}$ for some $\beta>0$. As there is no time inconsistency under exponential discounting, the optimal value $J^*(i):=\sup_{\pi\in \Pi} J^\pi(i)$ fulfills the Bellman equation
	\be\label{eq.rm} 
	-J^*(i)+\sup_{\mu\in \Pc(U)}\int_U \left(g(i,u)+ e^{-\beta} p^u_i\cdot J^*\right)\mu(du)=0,\quad \forall i\in\Sc.
	\ee 
	Suppose that an optimal relaxed control $\pi^*\in\Pi$ exists, i.e., $J^{\pi^*} = J^*$. Our goal is to show that $J ^*$ can be achieved by a standard control---such that there is no need to consider relaxed controls. 
	
	Given $i\in\Sc$, let $u\mapsto p^u_i$ and $u\mapsto g(i,u)$ be continuous such that $\argmax_{u\in U}\{ g(i,u) + e^{-\beta} p^u_i\cdot J^* \}$ is nonempty. For any standard control $\alpha$ with $\alpha(i)\in \argmax_{u\in U}\{ g(i,u) + e^{-\beta} p^u_i\cdot J^* \}$ for all $i\in\Sc$,
	\[
	J^{\alpha\otimes_1\pi^*}(i)= g(i,\alpha(i)) + e^{-\beta} p^{\alpha(i)}_i\cdot J^* =  J^*(i),\quad \forall i\in\Sc,
	\]
	where the second equality follows from \eqref{eq.rm} and $\alpha(i)\in \argmax_{u\in U}\{ g(i,u) + e^{-\beta} p^u_i\cdot J^* \}$. Similarly, 
	\[
	J^{\alpha\otimes_2\pi^*}(i)= g(i,\alpha(i)) + e^{-\beta} p^{\alpha(i)}_i\cdot J^{\alpha\otimes_1\pi^*} =  g(i,\alpha(i)) + e^{-\beta} p^{\alpha(i)}_i\cdot J^{\pi^*} =  J^*(i),\quad \forall i\in\Sc.
	\]
	Iterating this argument yields $J^{\alpha\otimes_m \pi^*} = J^*$ for all $m\in\N$. It follows that
	\begin{align*}
		J^*(i) = \lim_{m\to\infty} J^{\alpha\otimes_m \pi^*}(i) = \lim_{m\to\infty} \left(J^\alpha(i) + e^{-\beta m}\E\big[p^{\alpha(X_{m-1})}_{X_{m-1}}\cdot (J^* - J^\alpha)\big]\right) = J^\alpha(i),\quad \forall i\in\Sc. 
	\end{align*}
	That is, the optimal value is achieved by the standard feedback control $\alpha$.
\end{Remark}

\subsection{The Continuous-time Case}\label{subsec:continuous} 
Let us consider the continuous-time setup in Section~\ref{sec:continuous}. As we will see, many observations we made in Section~\ref{subsec:discrete} still hold in continuous time. To begin with, we draw a detailed comparison between Theorem~\ref{thm.lambdaconvergence.continuous} and \citet[Theorem 3]{HZ21}. 

{\color{black}
\begin{Remark}\label{rem:more general q^u}
In \cite{HZ21}, one considers the special case where $\ell=d-1$ and
\be\label{eq.delta'} 
q^u_i := \bigg(u_1,...,u_{i-1}, -\sum_{j\neq i}u_j, u_{i+1}, ...,u_d\bigg)\quad \forall i\in \Sc,\quad \text{with}\ u\in U\subseteq \{ y\in \R^{d-1} : y\ge 0\}.
\ee 
This essentially states that one can directly ``decide'' (rather than just ``influence'') the transition rates. By contrast, Theorem~\ref{thm.lambdaconvergence.continuous} only requires the map $u\mapsto q^u_i$ to be Lipschitz (Assumption~\ref{assume.u.lips}, with $p^u_i$ therein replaced by $q^u_i$), without specifying any specific form of it. That is, Theorem~\ref{thm.lambdaconvergence.continuous} allows for more general (and potentially more realistic) dependence of transition rates on the control action $u\in U$. 
\end{Remark}
}

{\color{black}
By \cite[Theorem 3]{HZ21}, a standard equilibrium for \eqref{tJ^pi} exists, provided that (i) \eqref{eq.delta'} holds, (ii) $U$ is convex, and (iii) $f(0,i,\cdot)$ is concave for all $i\in\Sc$. If one of the conditions fails, it is unclear whether standard equilibria exist. In particular, when (iii) fails, the next example has no standard equilibrium. 
}

\begin{Example}\label{eq:no standard E}
	Let $\Sc=\{1,2\}$ and $U=[0,1]$. For any $u\in U$, we take the rate matrix $q^u$ to be $q^u_1=(-u,u)$ and $q^u_2=(u,-u)$. Consider $\delta(t):=(e^{-t}+e^{-2t})/2$ for $t\ge 0$ and take $f(t,i,u)=\delta(t)g_i(u)$, $i=1,2$, where $g_1(u) :=-\frac{7}{8}\sqrt{u}$ and $g_2(u):=\frac{19}{9}-\sqrt{1-u}$ are strictly convex on $U$.  {\color{black} Thanks to \cite[Theorem 1]{HZ21},  
	$\alpha=(\alpha(1),\alpha(2))= (a^*,b^*)$ is a standard equilibrium} if and only if  
\begin{align*}
	&\argmax_{a\in [0,1]} \left\{ g_1(a)-a(V^{(a^*,b^*)}(0,1)- V^{(a^*,b^*)}(0,2))\right \}=a^*,\\
	&\argmax_{b\in [0,1]} \left\{ g_2(b)+b(V^{(a^*,b^*)}(0,1)- V^{(a^*,b^*)}(0,2)) \right\}=b^*. 
\end{align*}
The strict convexity of $g_1$ and $g_2$ then implies that there are only four possibilities for a standard equilibrium $\alpha=(a^*,b^*)$, i.e., $(0,0)$, $(0,1)$, $(1,0)$, and $(1,1)$. 
{\color{black}By calculations similar to \cite[(37)-(39)]{HZ21}, we have }
\begin{align*}
	\begin{cases}
		V^{(0,0)}(0,1)-V^{(0,0)}(0,2) &=-\frac{3}{4}\cdot \frac{10}{9}=-\frac{5}{6}; \\ 
		V^{(1,0)}(0,1)-V^{(1,0)}(0,2)&=-\frac{5}{12}\cdot (\frac{15}{8}+\frac{1}{9})\in (-\frac{7}{8},0); \\
		V^{(0,1)}(0,1)-V^{(0,1)}(0,2)&=-\frac{5}{12}\cdot (2+\frac{1}{9})<-\frac{7}{8}; \\
		V^{(1,1)}(0,1)-V^{(1,1)}(0,2)&=-\frac{7}{24}\cdot (\frac{23}{8}+\frac{1}{9})\in (-\frac{7}{8}, 0),
	\end{cases}
\end{align*}
which imply
$$
\begin{cases}
	\argmax_{b\in [0,1]} \{ g_2(b)+b(V^{(0,0)}(0,1)- V^{(0,0)}(0,2)) \}=1\neq 0;\\
	\argmax_{a\in [0,1]} \{ g_1(a)-a(V^{(1,0)}(0,1)- V^{(1,0)}(0,2)) \}=0\neq 1;\\
	\argmax_{a\in [0,1]} \{ g_1(a)-a(V^{(0,1)}(0,1)- V^{(0,1)}(0,2)) \}=1\neq 0;\\
	\argmax_{a\in [0,1]} \{ g_1(a)-a(V^{(1,1)}(0,1)- V^{(1,1)}(0,2)) \}=0\neq 1.
\end{cases}
$$
Therefore, we conclude that there exists no standard equilibrium. {\color{black} Despite this, since Assumptions \ref{assume.f.Q}, \ref{assume.u.lips} (with $p^u_i$ therein replaced by $q^u_i$), and \ref{assume.U.cone} can be immediately verified in the present setting, Theorem~\ref{thm.lambdaconvergence.continuous} asserts that a relaxed equilibrium exists. }
\end{Example}

{\color{black}
Remark~\ref{rem:more general q^u} and Example~\ref{eq:no standard E} show the importance of Theorem~\ref{thm.lambdaconvergence.continuous}: unlike standard equilibria, a relaxed equilibrium for \eqref{J^pi} exists much more generally, without the need of conditions (i), (ii), and (iii) mentioned above Example~\ref{eq:no standard E}. 
	Hence, in the face of general controlled transition rates (beyond the special form \eqref{eq.delta'}) and non-concave reward functions, relaxed equilibria should certainly be considered.  
}


{\color{black}
Let us now investigate the other side of the picture: for cases where both standard and relaxed equilibria are known to exist, does the consideration of relaxed equilibria provide any added value?
As shown in the next result, 
it is {\it not} necessary to consider relaxed equilibria in such a case, as any value achieved by a relaxed equilibrium can be recovered by a suitable standard equilibrium. 
}



\begin{Proposition}\label{prop.convex.nonentropy} 
Assume \eqref{eq.delta'} and that $U$ therein is convex. Suppose that $f(t,i,u)$ takes the form \eqref{discount problem}, with $g(i, \cdot)$ therein concave for all $i\in \Sc$. Also, let Assumptions \ref{assume.f.p}, \ref{assume.u.lips} (with $p^u_i$ therein replaced by $q^u_i$), and \ref{assume.U.cone} hold.  Then, 
for any relaxed equilibrium $\pi\in\Pi$ for \eqref{J^pi}, $\alpha^\pi:\Sc\to \R^d$ defined by $\alpha^\pi (i):= \int_U u (\pi(i))(du)$, $\forall i\in\Sc$, is a standard equilibrium such that $J^{\alpha^\pi}(\cdot)=J^\pi(\cdot)$. 
\end{Proposition}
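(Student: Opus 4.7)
The plan is to mirror the discrete-time argument used for Proposition \ref{prop.convex.nonentropy'}, with two ingredients that make the transcription clean in continuous time: the fixed-point characterization in Proposition \ref{prop.nonentropy.cha}, and the fact that under \eqref{eq.delta'}, the map $u\mapsto q^u_i$ is linear. Indeed, since $q^u_i\cdot y = \sum_{j\neq i} u_j (y_j - y_i)$ is affine in $u$, for any $y\in\R^d$ and $i\in \Sc$, the function $u\mapsto g(i,u)+ q^u_i\cdot y$ is concave on the convex set $U$, so its argmax
\[
\widetilde E(y,i) = \argmax_{u\in U}\{g(i,u)+q^u_i\cdot y\}
\]
is a nonempty, closed, convex subset of $U$ on which $u\mapsto g(i,u)+q^u_i\cdot y$ is constant. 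Because $u\mapsto q^u_i\cdot y$ is affine, this forces $g(i,\cdot)$ to be affine on $\widetilde E(y,i)$ as well.

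Fix a relaxed equilibrium $\pi\in \Pi$ for \eqref{tJ^pi}. By Proposition \ref{prop.nonentropy.cha}, $\operatorname{supp}(\pi(i))\subseteq \widetilde E(\widetilde J^\pi,i)$ for every $i\in\Sc$. Since $\widetilde E(\widetilde J^\pi,i)$ is convex, $\alpha^\pi(i) = \int_U u\,(\pi(i))(du)$ lies in the convex hull of $\operatorname{supp}(\pi(i))$ and hence in $\widetilde E(\widetilde J^\pi,i)\subseteq U$. Applying the affinity of $g(i,\cdot)$ on $\widetilde E(\widetilde J^\pi,i)$, Jensen's equality gives
\[
\int_U g(i,u)\,(\pi(i))(du) = g\big(i,\alpha^\pi(i)\big),\qquad \forall i\in\Sc,
\]
so $f^{\pi(i)}(t,i)=\delta(t) g(i,\alpha^\pi(i))=f(t,i,\alpha^\pi(i))$ for all $t\ge 0$. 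Moreover, the linearity of $u\mapsto q^u_i$ directly yields
\[
Q^{\pi(i)}_i = \int_U q^u_i\,(\pi(i))(du) = q^{\alpha^\pi(i)}_i,\qquad \forall i\in\Sc,
\]
so the relaxed generator induced by $\pi$ coincides row by row with the standard generator induced by $\alpha^\pi$.

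Consequently, $X^\pi$ and $X^{\alpha^\pi}$ are continuous-time Markov chains with identical infinitesimal generators and therefore the same law (analogously to Remark \ref{rem:same law}). Combined with the pointwise equality $f^{\pi(j)}(t,j)=f(t,j,\alpha^\pi(j))$ for all $j\in\Sc$ and $t\ge 0$, the representation \eqref{tJ^pi} gives $\widetilde J^{\alpha^\pi}(\cdot)=\widetilde J^\pi(\cdot)$. Finally, $\alpha^\pi$ viewed as a Dirac-valued relaxed control has support $\{\alpha^\pi(i)\}\subseteq \widetilde E(\widetilde J^\pi,i)=\widetilde E(\widetilde J^{\alpha^\pi},i)$ for each $i\in\Sc$, i.e., $\alpha^\pi\in \widetilde\Gamma(\widetilde J^{\alpha^\pi})=\widetilde\Phi(\alpha^\pi)$, so by Proposition \ref{prop.nonentropy.cha} it is a (standard) equilibrium. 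The only mildly delicate step is the Jensen-equality argument, which hinges on the observation that linearity of $q^u_i$ plus concavity of $g(i,\cdot)$ forces $g(i,\cdot)$ to be affine on the entire argmax set $\widetilde E(\widetilde J^\pi,i)$, not merely at the extreme points of $\operatorname{supp}(\pi(i))$; everything else is bookkeeping.
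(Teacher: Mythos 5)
Your proof is correct and follows essentially the same route as the paper, which simply transcribes the discrete-time argument of Proposition \ref{prop.convex.nonentropy'} with $p^u_i$ and $V^\pi$ replaced by $q^u_i$ and $\tJ^\pi$: support of $\pi(i)$ contained in the argmax set, affinity of $g(i,\cdot)$ there forcing the Jensen equality, and linearity of $u\mapsto q^u_i$ giving equality of generators. If anything, your write-up is slightly more complete than the paper's, since you unify the singleton/non-singleton case split into one convexity argument and explicitly verify both that the laws of $X^\pi$ and $X^{\alpha^\pi}$ coincide and that $\alpha^\pi$ satisfies the fixed-point characterization of Proposition \ref{prop.nonentropy.cha}.
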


The proof of Proposition~\ref{prop.convex.nonentropy} is similar to that of Proposition \ref{prop.convex.nonentropy'}, with $p^u_i$ and $V^\pi(\cdot)$ therein replaced by $q^u_i$ and $\tV^\pi(0,\cdot)$, respectively.

Finally, we would like to point out that, in line with the discrete-time setting, the need of relaxed controls 
is unique to the case of time inconsistency. When the problem $\sup_{\pi\in\Pi} \tJ^\pi(i)$ is time-consistent (e.g., under exponential discounting), one only needs to consider standard controls. 

\begin{Remark}\label{rm.conti}
Let $f(t,i,u)$ take the form \eqref{discount problem} with $\delta(t) = e^{-\beta t}$ for some $\beta>0$. As there is no time inconsistency under exponential discounting, the optimal value $\tJ^*(i):=\sup_{\pi\in \Pi} \tJ^\pi(i)$ fulfills the Bellman equation
$$
-\beta \tJ^*(i)+\sup_{\mu \in \Pc(U)}\int_U \left(g(i,u) + q^u_i\cdot \tJ^*\right)\mu(du)=0.
$$
Suppose that an optimal relaxed control $\pi^*\in\Pi$ exists, i.e., $\tJ^{\pi^*} = \tJ^*$. Then, arguments in Remark~\ref{rm.discrete} can be adapted to the present continuous-time setting and show that $\tJ^{\alpha} = \tJ^{\pi^*}$ for any standard control $\alpha$ with $\alpha(i)\in\argmax_{u\in U}\{g(i,u) + q^u_i\cdot \tJ^*\}$ for all $i\in\Sc$. 
\end{Remark}

\section{Convergence from Discrete Time to Continuous Time}\label{sec:disctsconverges}
In this section, we take up the same continuous-time setup as in Section~\ref{sec:continuous}. For a step size $h>0$ small enough, we construct a discrete-time approximation as follows. Let $\{X^h_k\}_{k\in\N_0}$ be a discrete-time controlled Markov process as in Section~\ref{sec:discrete}, with its transition matrix given by
\be\label{eq.phtoQ} 
(p_h)^u_i := h q^u_i+e_i\quad \forall u\in U,\ i\in\Sc,
\ee 
where $\{e_i\}_{i=1}^d$ is the standard basis of $\R^d$. 
Consider the reward function 
\be\label{f_h}
f_h(k,i,u):= f(kh, i,u) h\quad \forall k\in \N_0, 
\ee
where $f(t,i,u)$ is the reward function from Section~\ref{sec:continuous}. For any $\pi\in\Pi$, $J^\pi$ in \eqref{J^pi} now takes the form
\begin{equation}\label{J^pi,h}
J^{h,\pi}(i) := \E_i\bigg[ \sum_{k=0}^\infty f^{\pi(X^h_k)}_h\left(k,X^h_k\right)\bigg]= h \E_i\bigg[ \sum_{k=0}^\infty \int_{U} f(kh, X_{k}^{h}, u) (\pi(X_k^h)) (du) \bigg],\quad \forall i\in \Sc.
\end{equation}
Similarly, the auxiliary value function $V^{h,\pi}(i)$ is defined as in \eqref{eq.relaxed.V} with $f$ and $X^\pi$ therein replaced by $f_h$ and $X^h$, respectively. 
In addition, for any $\lambda>0$ and $\pi\in\Pi_r$, we recall $J^\pi_{\lambda}$ in \eqref{eq.relaxed.J} and define
\begin{align}\label{eq.relaxed.J^h} 
J^{h,\pi}_\lambda(i) := J^\pi_{h \lambda}(i) &= \E_i\bigg[ \sum_{k=0}^\infty \left( f^{\pi(X^h_k)}_h(k,X^h_k) + h\lambda  \delta(k)\Hc\big({\pi(X^h_k)}\big) \right) \bigg]\notag\\
& = h \E_i\bigg[ \sum_{k=0}^\infty \left( \int_{U} f(kh, X_{k}^{h}, u) (\pi(X_k^h)) (du) + \lambda  \delta(k)\Hc\big({\pi(X^h_k)}\big) \right) \bigg],\quad \forall i\in \Sc.
\end{align} 
Similarly, the auxiliary value function $V^{h,\pi}_\lambda(i)$ is defined as in \eqref{eq.relaxed.V.lambda} with $f$, $X$, and $\lambda$ replaced by $f_h$, $X^h$, and $h\lambda$, respectively. 



Let us now we study the convergence of value functions from discrete time to continuous time. 

\begin{Lemma}\label{lm.sec3.1}
Let Assumptions \ref{assume.f.Q}, \ref{assume.u.lips} (with $p^u_i$ therein replaced by $q^u_i$), and \ref{assume.U.cone} hold.  Suppose that $f(\cdot,i,u)$ and $\delta(\cdot)$ are continuous on $[0,\infty)$.
Take any sequence $\{h_n\}_{n\in \N}$ in $(0,1]$ with $h_n\downarrow 0$. 
\bi  
\item[(a)] For any $\{\pi^n\}_{n\in\N}$ and $\pi^\infty$ in ${\color{black} \Pi}$ such that $\pi^n(i)\to \pi^\infty(i)$ weakly for all $i\in \Sc$,  
$$
V^{h_n,\pi^n}(i)\to \tV^{\pi^\infty}(0,i) = \tJ^{\pi^\infty}(i),\quad i\in\Sc.
$$
\item[(b)] For any $\lambda>0$ and $\{y^n\}_{n\in\N}$ with $y^n\to y^\infty$ for some $y^\infty\in\R^d$, we have $ \Gamma_{h_n\lambda} (y^n,i) \to \Gamma_\lambda (y^\infty,i)$ for all $i\in \Sc$. Therefore,
$$
V^{h_n,\Gamma_{h_n\lambda}(y^n,\cdot)}_{\lambda}(i)\to \tV^{\widetilde\Gamma_{\lambda}(y^\infty,\cdot)}(0,i)=\tJ^{\widetilde\Gamma_{\lambda}(y^\infty,\cdot)}(i),\quad \forall i\in\Sc.
$$
\ei 
\end{Lemma}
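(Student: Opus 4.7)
For part (a), my plan is to rewrite the auxiliary value function as a Riemann sum and identify the semigroup limit. Let $P_n$ denote the $d\times d$ transition matrix with entries $(P_n)_{ij}:=\delta_{ij}+h_n Q^{\pi^n(i)}_{ij}$, i.e., the transition matrix of the ``averaged'' discrete-time chain under $\pi^n$ in the sense of Remark~\ref{rem:same law}. Iterating gives the closed-form
\[
V^{h_n,\pi^n}(i)=h_n\sum_{k=0}^\infty\sum_{j\in\Sc}f^{\pi^n(j)}\bigl((k+1)h_n,j\bigr)\,(P_n^k)_{ij},
\]
which is a Riemann sum, with step $h_n$, for the target $\tV^{\pi^\infty}(0,i)=\int_0^\infty\sum_{j\in\Sc}f^{\pi^\infty(j)}(s,j)\,p_s^\infty(i,j)\,ds$, where $p_s^\infty(i,j)=(\exp(sM_\infty))_{ij}$ is the continuous-time transition probability of $X^{\pi^\infty}$ and $M_\infty$ is the generator with $(M_\infty)_{ij}:=Q^{\pi^\infty(i)}_{ij}$. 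Pointwise convergence of the integrand at each $s>0$ rests on two facts: first, $P_n^{\lfloor s/h_n\rfloor}\to \exp(sM_\infty)$ componentwise, which follows from the standard identity $(I+h_nA_n)^{\lfloor s/h_n\rfloor}\to e^{sA_\infty}$ whenever $A_n\to A_\infty$ and $h_n\downarrow 0$ (provable via $\log(I+h_nA_n)=h_nA_n+O(h_n^2)$); and second, $f^{\pi^n(j)}((\lfloor s/h_n\rfloor+1)h_n,j)\to f^{\pi^\infty(j)}(s,j)$, which follows from joint continuity of $f$ together with weak convergence $\pi^n(j)\to\pi^\infty(j)$. Dominated convergence, with majorant $\sup_{i',u'}|f(s,i',u')|$ (integrable in $s$ by Assumption~\ref{assume.f.Q}), closes the argument.

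For part (b), the key is the algebraic identity $\Gamma_{h\lambda}(y,i)=\widetilde\Gamma_\lambda(y,i)$, valid for every $h>0$. Indeed, substituting $f_h(0,i,u)=hf(0,i,u)$, $(p_h)_i^u=hq_i^u+e_i$, and regularization parameter $h\lambda$ into \eqref{Gamma_lambda}, the exponent in both numerator and denominator splits as $\frac{1}{\lambda}(f(0,i,u)+q_i^u\cdot y)+\frac{y_i}{h\lambda}$, and the $u$-independent term $\frac{y_i}{h\lambda}$ cancels. Continuity of $y\mapsto\widetilde\Gamma_\lambda(y,i)(u)$ then delivers pointwise convergence of the densities $\Gamma_{h_n\lambda}(y^n,i)(u)=\widetilde\Gamma_\lambda(y^n,i)(u)\to \widetilde\Gamma_\lambda(y^\infty,i)(u)$ for every $u\in U$, and Scheff\'e's lemma upgrades this to convergence in total variation---in particular, weak convergence---of the associated probability measures. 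Applying part~(a) with $\pi^n:=\Gamma_{h_n\lambda}(y^n,\cdot)$ and $\pi^\infty:=\widetilde\Gamma_\lambda(y^\infty,\cdot)$ yields $V^{h_n,\pi^n}(i)\to\tV^{\pi^\infty}(0,i)=\tJ^{\pi^\infty}(i)$. Finally, the extra entropy contribution
\[
V^{h_n,\pi^n}_\lambda(i)-V^{h_n,\pi^n}(i)=h_n\lambda\,\E_i\bigg[\sum_{k=0}^\infty\delta(1+k)\,\Hc\bigl(\pi^n(X^{h_n,\pi^n}_k)\bigr)\bigg]
\]
is $O(h_n)$: boundedness of $\{y^n\}$ combined with Lemma~\ref{lm.1} yields $\sup_{n,i}|\Hc(\pi^n(i))|<\infty$, while monotonicity of $\delta$ and Assumption~\ref{assume.f.Q} yield $\sum_{k\ge 0}\delta(1+k)\le\int_0^\infty\delta(s)\,ds<\infty$.

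The main obstacle I expect is justifying the infinite-horizon Riemann-sum-to-integral passage in part~(a). Because the discretization changes with $n$, one must produce an $n$-uniform integrable majorant for the Riemann-sum integrand in $s$. A robust remedy is a split-at-$T$ argument: on $[0,T]$ apply bounded convergence, and on $[T,\infty)$ use the tail estimate $h_n\sum_{k\ge T/h_n}\sup_{i',u'}|f((k+1)h_n,i',u')|\approx\int_T^\infty\sup_{i',u'}|f(s,i',u')|\,ds$, made arbitrarily small by choosing $T$ large, thanks to Assumption~\ref{assume.f.Q}. The matrix-exponential semigroup convergence is classical, and the entropy vanishing step in part~(b) is quantitative and easily controlled.
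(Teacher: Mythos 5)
Your proposal is correct, and for part (a) it takes a genuinely different route from the paper. The paper's proof is probabilistic: it interposes the continuous-time chain $X^n$ driven by the \emph{same} policy $\pi^n$, and splits the error into three pieces --- discrete chain vs.\ the $h_n$-skeleton of $X^n$ (controlled by comparing matrix powers $(P^n)^k$ and $(\widetilde{P^n})^k$), skeleton Riemann sum vs.\ the time integral along $X^n$ (controlled via the no-jump event on each interval and uniform continuity of $f$), and finally $X^n$ vs.\ $X^\infty$ (handled by convergence of rate matrices and the Skorokhod-type argument imported from \cite[Theorem 3]{HZ21}). You instead exploit the finiteness of $\Sc$ to reduce everything to linear algebra: write $V^{h_n,\pi^n}(i)$ in closed form via $(P_n^k)_{ij}$, identify the target as $\int_0^\infty \sum_j f^{\pi^\infty(j)}(s,j)(e^{sM_\infty})_{ij}\,ds$, and invoke the classical semigroup limit $(I+h_nA_n)^{\lfloor s/h_n\rfloor}\to e^{sA_\infty}$. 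This is more elementary and self-contained (no coupling, no Skorokhod representation), at the cost of being tied to the finite-state setting; the paper's process-level comparison is the one that would survive a larger state space. Both proofs face the identical infinite-horizon issue and resolve it the identical way, by truncating at $T$ with $\int_T^\infty\sup_{i,u}|f|<\eps$; your tail Riemann-sum estimate is stated at the same level of informality as the paper's own \eqref{eq.I1.0}, so I do not count it against you. For part (b) your argument is essentially the paper's: the cancellation of the $u$-independent term $y_i/(h\lambda)$ giving $\Gamma_{h\lambda}(y,i)=\widetilde\Gamma_\lambda(y,i)$ is exactly the computation in \eqref{eq.thm3.1}, and your quantitative $O(h_n\lambda(1+|\ln(h_n\lambda)|))$ treatment of the entropy remainder is if anything more explicit than the paper's; Scheff\'e's lemma is a clean addition (the paper only needs weak convergence, which pointwise convergence of densities already yields). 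One small caution: when you bound $\sup_{n,i}|\Hc(\pi^n(i))|$ via Lemma~\ref{lm.1}, make sure you apply it to $\widetilde\Gamma_\lambda$ at the \emph{fixed} parameter $\lambda$ (legitimate thanks to your identity), since the constant $\kappa$ in Lemma~\ref{lm.1} applied naively to $\Gamma_{h_n\lambda}$ degrades as $h_n\downarrow0$; alternatively Remark~\ref{rem:indep. of lambda} shows even that degradation is only logarithmic and hence harmless after multiplication by $h_n\lambda$.
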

The proof of Lemma~\ref{lm.sec3.1} is relegated to Appendix~\ref{subsec:proof of lm.sec3.1}. 

We are ready to present our main convergence result from discrete time to continuous time: as the time step $h_n>0$ tends to zero, a {\color{black}regular relaxed equilibrium (resp. a relaxed equilibrium)} $\pi^n$  in discrete time (with time step $h_n$) ultimately converges to a {\color{black}regular relaxed equilibrium (resp. a relaxed equilibrium)} in continuous time.  

\begin{Theorem}\label{thm.discts.convergence}
Let Assumptions \ref{assume.f.Q}, \ref{assume.u.lips} (with $p^u_i$ therein replaced by $q^u_i$), and \ref{assume.U.cone} hold. Suppose that $f(\cdot,i,u)$ and $\delta(\cdot)$ are continuous on $[0,\infty)$. Take any $\{h_n\}_{n\in \N}$ in $(0,1]$ with $h_n\downarrow 0$. 
\bi  
\item[(a)] Given $\lambda>0$, 
let $\pi^n\in\Pi_r$ be a {\color{black}regular relaxed equilibrium} for $J^{h_n,\pi}_\lambda$ in \eqref{eq.relaxed.J^h} for all $n\in\N$. Then, for each $i\in\Sc$, $\pi^n(i)\in\Dc(U)$ converges pointwise to some $\pi^\infty(i)\in\Dc(U)$, up to a subsequence. Moreover, $\pi^\infty\in\Pi_r$ is a {\color{black}regular relaxed equilibrium} for $\tJ^\pi_\lambda$ in \eqref{eq.relaxed.tJ}. 

\item [(b)] Let $\pi^n\in\Pi$ be a relaxed equilibrium for $J^{h_n,\pi}$ in \eqref{J^pi,h} for all $n\in\N$. Then, for each $i\in\Sc$, $\pi^n(i)\in\Pc(U)$ converges weakly to some $\pi^\infty(i)\in\Pc(U)$, up to a subsequence. Moreover, $\pi^\infty\in\Pi$ is a relaxed equilibrium for $\tJ^\pi$ in \eqref{tJ^pi}.
\ei 
\end{Theorem}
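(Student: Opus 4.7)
The plan is to run the two-step existence argument in parallel to Theorems \ref{thm.discrete.relaxedentropy}--\ref{thm.lambdaconvergence.discrete}, with Lemma \ref{lm.sec3.1} providing the bridge between discrete- and continuous-time value functions. The key algebraic observation, already used in the proof of Lemma \ref{lm.sec3.1}(b), is that under the scaling $(f,p^u_i,\lambda)\mapsto (f_{h_n},\,h_n q^u_i+e_i,\,h_n\lambda)$ applied to the Gibbs density in \eqref{Gamma_lambda} or to the argmax in \eqref{eq.discrete.supp}, the factor $h_n$ factors out of the exponent/argmax and the $u$-independent term $e_i\cdot y$ cancels in the normalization. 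Consequently,
\[
\Gamma_{h_n\lambda}(y,i) = \widetilde\Gamma_\lambda(y,i)\quad\text{and}\quad \argmax_{u\in U}\{f_{h_n}(0,i,u)+(h_n q^u_i+e_i)\cdot y\}=\widetilde E(y,i)\quad \forall y\in\R^d,
\]
so each discrete-time fixed-point relation is automatically one for the continuous-time operator, and the only remaining task is to identify the fixed-point vector with the correct continuous-time auxiliary value.

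For part (a): By Proposition \ref{propdiscr.entropy.cha} applied to the discrete MDP (step $h_n$, regularization $h_n\lambda$), $\pi^n(i)=\widetilde\Gamma_\lambda(y^n,i)$ where $y^n:=V^{h_n,\pi^n}_\lambda\in\R^d$. I would first establish $\sup_n|y^n|<\infty$ by the same kind of calculation as in \eqref{eq.1'} using Lemma \ref{lm.1}. The crucial point is that under the scaling above, the effective Lipschitz constant becomes $h_n\Theta$ and the effective regularization becomes $h_n\lambda$, so the ratio $\Theta/\lambda$ appearing in the constant $C$ of \eqref{Gamma esti} is independent of $n$; combined with Assumption \ref{assume.f.Q} (which bounds the Riemann sum $h_n\sum_k \sup_{i,u}|f(kh_n,i,u)|$ uniformly in $n$), this yields a sublinear self-bound $|y^n|\le C_1+C_2\,\phi(|y^n|)$ holding uniformly in $n$. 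Extracting a subsequence $y^n\to y^\infty$, an \eqref{dct}-type dominated convergence argument gives pointwise convergence $\pi^n(i)(u)\to \widetilde\Gamma_\lambda(y^\infty,i)(u)=:\pi^\infty(i)(u)$, so $\pi^\infty\in\Pi_r$. Lemma \ref{lm.sec3.1}(b) then identifies $y^\infty$ as the continuous-time auxiliary value $\tJ^{\pi^\infty}_\lambda$, whence
\[
\pi^\infty = \widetilde\Gamma_\lambda(\tJ^{\pi^\infty}_\lambda,\cdot) = \widetilde\Phi_\lambda(\pi^\infty),
\]
and Proposition \ref{propconti.entropy.cha} concludes.

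For part (b): By Proposition \ref{prop.chadiscrete.nonentropy}, $\operatorname{supp}(\pi^n(i))\subseteq \widetilde E(z^n,i)$ with $z^n:=V^{h_n,\pi^n}\in\R^d$. No entropy estimate is needed now: Assumption \ref{assume.f.Q} alone yields $\sup_n|z^n|<\infty$, and weak compactness of $\Pc(U)$ gives a subsequence along which $z^n\to z^\infty$ in $\R^d$ and $\pi^n(i)\to\pi^\infty(i)$ weakly for each $i\in\Sc$, producing $\pi^\infty\in\Pi$. Lemma \ref{lm.sec3.1}(a) identifies $z^\infty=\tJ^{\pi^\infty}$. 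The final support-containment step $\operatorname{supp}(\pi^\infty(i))\subseteq \widetilde E(\tJ^{\pi^\infty},i)$ reuses the argument from the proof of Theorem \ref{thm.lambdaconvergence.discrete}: given $u_0\notin\widetilde E(z^\infty,i)$, continuity of $u\mapsto f(0,i,u)+q^u_i\cdot y$ on the compact $U$ combined with $z^n\to z^\infty$ provides $r>0$ with $\overline{B_r(u_0)}\cap \widetilde E(z^n,i)=\emptyset$ for all large $n$, so $\pi^n(i)(B_r(u_0))=0$; the Portmanteau lemma then forces $\pi^\infty(i)(B_r(u_0))=0$. Hence $\pi^\infty\in\widetilde\Phi(\pi^\infty)$ and Proposition \ref{prop.nonentropy.cha} yields the equilibrium property.

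The main obstacle is the $n$-uniform bound $\sup_n|y^n|<\infty$ in part (a): a naive use of Lemma \ref{lm.1} with the shrinking regularization $h_n\lambda\downarrow 0$ would produce a constant exploding like $(h_n\lambda)^{-\ell}$, and the saving observation is precisely the matched scaling of the discrete Lipschitz constant and regularization, which keeps $\Theta/\lambda$ fixed along the sequence so that \eqref{Gamma esti} yields an $n$-independent constant. All other manipulations are direct transcriptions of arguments already carried out in Sections \ref{subsec.discrete.entropy}--\ref{subsec.discrete.noentropy}.
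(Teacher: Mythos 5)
Your proposal is correct and follows essentially the same route as the paper: the scaling identities $\Gamma_{h_n\lambda}(y,i)=\widetilde\Gamma_\lambda(y,i)$ and $\argmax_u\{f_{h_n}(0,i,u)+(p_{h_n})^u_i\cdot y\}=\widetilde E(y,i)$, a uniform bound on the discrete value vectors, subsequence extraction, Lemma \ref{lm.sec3.1} to identify the limit values, and the fixed-point characterizations (Propositions \ref{propconti.entropy.cha} and \ref{prop.nonentropy.cha}) to conclude. The only notable difference is in part (a), where the paper simply cites Lemma \ref{lm.vn.bound} for $\sup_n|y^n|<\infty$ while you rederive the bound by observing that the matched scaling keeps $\Theta/\lambda$ fixed along the sequence --- a more careful justification, since Lemma \ref{lm.vn.bound} was literally stated for a fixed MDP with shrinking regularization rather than for the rescaled family, but the underlying estimate is identical.
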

The proof of Theorem~\ref{thm.discts.convergence} is relegated to Appendix~\ref{subsec:proof of thm.discts.convergence}.



\appendix
\section{Proofs}
\subsection{Proof of Lemma~\ref{lm.1}}\label{subsec:proof of lm.1}
	Fix $i\in\Sc$ and $y\in\R^d$. For an arbitrary $\bar u\in U$, thanks to Assumption \ref{assume.u.lips},	
	\begin{equation}\label{Lipest}
		\left|(f(0,i,u)+p^u_i\cdot y)- (f(0,i,\bar{u})+p^{\bar u}_i\cdot y)\right|\leq \Theta(1 + |y|)|u-\bar{u}|, \quad \forall u\in {U}.
	\end{equation}
	This, along with \eqref{Gamma_lambda}, implies
	\be\label{eq.lm.ent1} 
	\begin{aligned}
		\Gamma_\lambda(y,i)(\bar u)= \frac{\exp(\frac{1}{\lambda}[f(0,i,\bar{u})+p^{\bar u}_i\cdot y] )}{\int_{U} \exp(\frac{1}{\lambda}[f(0,i,u)+p^u_i\cdot y] )du}
		\leq \frac{1}{\int_{U} \exp \big(-\frac{\Theta}{\lambda}[ (1+ |y|)|u-\bar{u}| ] \big) du},\quad \forall u\in {U}, 
	\end{aligned}
	\ee
	where the inequality follows from dividing by $\exp(\frac1\lambda [f(0,i,\bar{u})+p^{\bar u}_i\cdot y])$ the numerator and the denominator and then using the estimate \eqref{Lipest}. 
	
	{\color{black}Now let us prove \eqref{eq:supH esti} for $\ell>1$.} By Assumption \ref{assume.U.cone}, 
	\begin{align}\label{eq.lm.ent} 
		\int_U e^{-\frac{\Theta}{\lambda} (1 + |y|)|u-\bar u|}  du &\ge \int_{\operatorname{cone}(\bar u,\iota)\cap B_\vartheta(\bar u)} e^{-\frac{\Theta}{\lambda} (1 + |y|)|u-\bar u|}  du = \int_{\Delta_\iota \cap B_\vartheta(0)} e^{-\frac{\Theta}{\lambda} (1 + |y|) |u|} du,
	\end{align}
	where the equality follows from a translation from $\bar u$ to $0\in\R^\ell$ and an appropriate rotation about $0\in\R^\ell$. Let us estimate the right-hand side of \eqref{eq.lm.ent} in the next two cases. If $\frac{\Theta}{\lambda} (1+ |y|)\vartheta \leq 1$, 
	$$
	\int_{\Delta_\iota \cap B_\vartheta(0)} e^{-\frac{\Theta}{\lambda} (1 + |y|) |u|} du \geq e^{-1} \Leb(\Delta_\iota \cap B_\vartheta(0)) =: K_0. 
	$$
	If $\frac{\Theta}{\lambda} (1+ |y|)\vartheta > 1$, consider the two positive constants
	\[
	K_1 :=  \int_0^\pi\sin^{\ell-2}(\varphi_1)d\varphi_1 \cdots \int_0^\pi \sin (\varphi_{\ell-2})d\varphi_{\ell-2} \int_{-\iota}^{\iota} d \varphi_{\ell-1}\quad \hbox{and}\quad K_2 := \int_0^1z^{\ell-1}e^{-z}dz. 
	\]
	By using the $\ell$-dimensional spherical coordinates, we have
	\bee
	\begin{aligned}
		\int_{\Delta_\iota \cap B_\vartheta(0)} e^{-\frac{\Theta}{\lambda} (1 + |y|) |u|} du &= K_1 \int_0^\vartheta r^{\ell-1}  e^{-\frac{\Theta}{\lambda} (1 + |y|) r} dr\\
		& = K_1 \bigg(\frac{\lambda}{\Theta (1+ |y| )}\bigg)^\ell \int_0^{\frac{\Theta}{\lambda}(1 + |y|)\vartheta} z^{\ell-1}e^{-z}dz\geq K_1 K_2 \bigg(\frac{\lambda}{\Theta (1+ |y| )}\bigg)^\ell, 
	\end{aligned}
	\eee
	where the second line follows from the change of variable $z=\frac{\Theta}{\lambda}(1+ |y|) r$. Combining the above two cases, we conclude from \eqref{eq.lm.ent1} and \eqref{eq.lm.ent} that
	\begin{equation}\label{Gamma esti}
		\Gamma_\lambda(y,i)(u) \leq \max\bigg\{\frac{1}{K_0},   \frac{1}{K_1K_2} \bigg(\frac{\Theta}{\lambda}(1+ |y|)\bigg)^\ell \bigg\}\le C  (1+|y|)^\ell,
	\end{equation}	
	with $C := \max\{\frac{1}{K_0}, \frac{1}{K_1K_2}\left(\frac{\Theta}{\lambda}\right)^\ell\}>0$, which depends on $\iota$, $\vartheta$, $\Theta$, $\lambda$ and $\ell$ (Recall that $K_0$, $K_1$, and $K_2$ depend on $\iota$, $\vartheta$, and $\ell$). It follows that $\ln(\Gamma_\lambda(y,i)(u)) \le \ln C+\ell\ln (1+|y|)$ for all $(y,i,u)\in\R^d\times\Sc\times U$. As $\Gamma_\lambda(y,i)\in\Dc(U)$ by definition, this implies
	\[
	\sup_{i\in\Sc} \int_U \ln(\Gamma_\lambda(y,i)(u)) \Gamma_\lambda(y,i)(u)du \le  \ln C+\ell\ln (1+|y|),\quad \forall y\in \R^d. 
	\] 
	On the other hand, \eqref{eq.kl.lbound} readily shows that $\sup_{i\in\Sc} \int_U \ln(\Gamma_\lambda(y,i)(u)) \Gamma_\lambda(y,i)(u)du \geq -\ln(\Leb(U))$. Hence, in view of \eqref{H}, we conclude that
	\begin{align}
		\sup_{i\in \Sc} \left|\Hc(\Gamma_\lambda(y,i))\right| &= \sup_{i\in \Sc}  \left| \int_U \ln(\Gamma_\lambda(y,i)(u))\Gamma_\lambda(y,i)(u) du \right| \notag \\
		&\leq |\ln(\Leb(U))| +  |\ln C|+\ell\ln (1+|y|),\quad \forall y\in\R^d,  \label{supH esti}
	\end{align}	
	{\color{black}which shows \eqref{eq:supH esti} holds.}
	For $\ell=1$, by following arguments simiar to the above, with $\Delta_\iota\cap B_\vartheta(0)$, $K_0, K_1$, and $K_2$ replaced by $[0,\vartheta]$, $e^{-1}\vartheta, 1$, and $\int_0^1 e^{-z}dz = 1-e^{-1}$, respectively, {\color{black}we can obtain the desired estimate in \eqref{eq:supH esti} for $\ell=1$.}
	
	{\color{black}Now, for $\lambda>0$ small enough, the constant $C>0$ in \eqref{Gamma esti} simply becomes $\frac{1}{K_1K_2}\left(\frac{\Theta}{\lambda}\right)^\ell$ (for both the cases $\ell>1$ and $\ell=1$). The estimate \eqref{supH esti} then directly implies \eqref{eq:supH esti'}. 
}


\subsection{Proof of Theorem~\ref{thm.discrete.relaxedentropy}}\label{subsec:proof of thm.discrete.relaxedentropy}
	First, let us establish the continuity of $\Psi_\lambda:\R^d\to\R^d$. 
	Take an arbitrary sequence $\{y^n\}_{n\in\N}$ in $\R^d$ such that $y^n\to y^\infty$ for some $y^\infty\in\R^d$. 
	As $f(0,i,u)$ and $p^u_i$ are continuous in $u$ (Assumption~\ref{assume.u.lips}), $U$ is compact, and $\Sc$ is a finite set, we conclude from the dominated convergence theorem that 
	\be\label{dct}
	\sup_{i\in \Sc} \bigg| \int_{U} e^{\frac{1}{\lambda}[f(0,i,u)+p^u_i\cdot y^n]} du- \int_{U} e^{\frac{1}{\lambda}[f(0,i,u)+p^u_i\cdot y^\infty]} du \bigg| \to 0,\quad \hbox{as}\ n\to\infty. 
	\ee
	In view of \eqref{Gamma_lambda}, this particularly implies that for any $i\in\Sc$, $\Gamma_\lambda (y^n,i)(u)\to \Gamma_\lambda (y^\infty,i)(u)$ for all $u\in U$. Moreover, for any $i\in\Sc$, observe that
	\begin{align}\label{thm.eq} 
		&\hspace{0.1in}| \Hc\big( \Gamma_\lambda (y^n,i) \big) - \Hc \big( \Gamma_\lambda (y^\infty,i) \big)|\notag\\
		\le &\ \int_U \left| \ln\big(   \Gamma_\lambda (y^n,i)(u)\big) -  \ln\big(   \Gamma_\lambda (y^\infty,i)(u)\big)   \right|  \Gamma_\lambda (y^n,i)(u) du\notag\\
		&\hspace{0.4in}+ \int_U  \left| \ln\big(   \Gamma_\lambda (y^\infty,i)(u)\big) \right|  \big|  \Gamma_\lambda (y^n,i)(u)- \Gamma_\lambda (y^\infty,i)(u) \big|du\notag\\
		\leq &\ \frac1\lambda \left(\sup_{u\in U}|p^u_i|\right) \left| y^n- y^\infty \right| + \left|  \ln\bigg( \int_{U} e^{\frac{1}{\lambda}[f(0,i,v)+p^v_i\cdot y^n]} dv\bigg) - \ln\bigg( \int_{U} e^{\frac{1}{\lambda}[f(0,i,v)+p^v_i\cdot y^\infty]} dv\bigg)   \right|\notag \\
		&\hspace{0.4in}+\sup_{u\in U}\left| \ln\big(   \Gamma_\lambda (y^\infty,i)(u)\big) \right| 
		\int_U  \left|  \Gamma_\lambda (y^n,i)(u)- \Gamma_\lambda (y^\infty,i)(u) \right|du,
	\end{align}
	where the first inequality follows from \eqref{H} and the second inequality is due to \eqref{Gamma_lambda}. As $u\mapsto f(0,i,u)$ and $u\mapsto p^u_i$ are continuous on the compact set $U$, we get the boundedness of $u\mapsto |p^u_i|$ and $u\mapsto \Gamma_\lambda (y^\infty,i)(u)$. Particularly, in view of \eqref{Gamma_lambda}, $u\mapsto \Gamma_\lambda (y^\infty,i)(u)$ is bounded away from zero. This in turn implies that $u\mapsto \ln\big(\Gamma_\lambda (y^\infty,i)(u)\big)$ is bounded. Hence, as $n\to\infty$, we can now conclude from $y^n\to y^\infty$ and \eqref{dct} that the right-hand side of \eqref{thm.eq} vanishes. That is, $\Hc\big( \Gamma_\lambda (y^n,i) \big)\to \Hc\big( \Gamma_\lambda (y^\infty,i) \big)$. 
	
	For any $n\in\N$, 
	consider the Markov chain $\bar X^{n}$ with transition matrix $p^{n}$ given by $p^n_{ij} := \int_U p^u_{ij} \Gamma_\lambda(y^n,i)(u) du$ for all $i,j\in\Sc$, as well as the Markov chain $\bar X^{\infty}$ with transition matrix $p^{\infty}$ given by $p^\infty_{ij} := \int_U p^u_{ij} \Gamma_\lambda(y^\infty,i)(u)du$ for all $i,j\in\Sc$. For each $i\in\Sc$, note that the pointwise convergence $\Gamma_\lambda(y^n,i)\to \Gamma_\lambda(y^\infty,i)$ in $\Dc(U)$ already implies the weak convergence of the corresponding probability measures. This, along with $u\mapsto p^u_{i}$ being continuous for all $i\in\Sc$, yields the convergence of the transition matrices, i.e., $p^n\to p^\infty$ component by component, which in turn implies that the law of $\bar X^n$ converges weakly to that of $\bar X^\infty$. In view of Remark~\ref{rem:same law}, the law of $\bar X^{n}$ (resp.\ $\bar X^{\infty}$) coincides with that of $X^{\Gamma_\lambda(y^n,\cdot)}$ (resp.\ $X^{\Gamma_\lambda(y^\infty,\cdot)}$). That is, we actually have the law of $X^{\Gamma_\lambda(y^n,\cdot)}$ converging weakly to that of $X^{\Gamma_\lambda(y^\infty,\cdot)}$. Now, we can adapt an argument in the proof of \citet[Theorem 3]{HZ21} to our present setting. Specifically, by Skorokhod's representation theorem, there exist $\Sc$-valued processes $Y_n$ and $Y$, defined on some probability space $(\Omega,\mathcal F, P)$, such that the law of $Y_n$ coincides with that of $X^{\Gamma_\lambda(y^n,\cdot)}$, the law of $Y$ coincides with that of $X^{\Gamma_\lambda(y^\infty,\cdot)}$, and $Y^n_k\to Y_k$ for all $k\in\N_0$ $P$-a.s. As $\Sc$ is a finite set, for each $k\in\N_0$, we in fact have $Y^n_k = Y_k$ for $n\in\N$ large enough. It follows that for any $i\in\Sc$,
	\begin{align*}
		&V_\lambda^{\Gamma_\lambda(y^n,\cdot)}(i) = \E_i^P \bigg[ \sum_{k=0}^\infty \left( \int_U f(1+k,Y^n_k,u) \Gamma_\lambda(y^n,Y^n_k) (u)du + \lambda  \delta(1+k)\Hc\big(\Gamma_\lambda(y^n,Y^n_k)\big) \right) \bigg]\\
		&\to  \E_i^P \bigg[ \sum_{k=0}^\infty \left( \int_U f(1+k,Y^\infty_k,u) \Gamma_\lambda(y^\infty,Y^\infty_k) (u)du + \lambda  \delta(1+k)\Hc\big(\Gamma_\lambda(y^\infty,Y^\infty_k)\big) \right) \bigg] = V_\lambda^{\Gamma_\lambda(y^\infty,\cdot)}(i),
	\end{align*}
	where the convergence follows from the dominated convergence theorem, $Y^n_k=Y^\infty_k$ for $n\in\N$ large enough, and $\Gamma_\lambda (y^n,i)\to \Gamma_\lambda (y^\infty,i)$ and $\Hc\big( \Gamma_\lambda (y^n,i) \big)\to \Hc\big( \Gamma_\lambda (y^\infty,i) \big)$ for all $i\in\Sc$. Let us stress that the dominated convergence theorem is applicable here thanks to Assumptions \ref{assume.f.p} and Lemma~\ref{lm.1}. We therefore conclude that $\Psi_\lambda: \R^d \to\R^d$ is continuous.
	
	Now, we are ready to show that a fixed point of $\Psi_\lambda$ exists. For any $y\in \R^d$, 
	\be\label{eq.1'}  
	\begin{aligned}
		|\Psi_\lambda(y)| = \left|V^{\Gamma_\lambda(y,\cdot)}_\lambda(i) \right| &\leq \E_i\bigg[ \sum_{t=0}^\infty \left(\left|f^{\Gamma_\lambda(y,X_t))}(t,i)\right|+ \lambda\delta(t)|\Hc(\Gamma_\lambda(y,X_t))| \right)\bigg]\\
		&\leq   M+\lambda\sum_{t=0}^\infty \delta(t)\phi(|y|) \le \big(1+\lambda\phi(|y|)\big)M,
	\end{aligned}
	\ee
	where the first inequality stems from \eqref{eq.relaxed.V.lambda}, $M>0$ is the constant in Assumption~\ref{assume.f.p}, and the second inequality follows from {\color{black}\eqref{eq:supH esti} in Lemma \ref{lm.1}. Note from \eqref{eq:supH esti}} that $\phi:\R_+\to \R+$ grows sublinearly (i.e., $\phi(\alpha)/\alpha\to 0$ as $\alpha\to\infty$). Hence, $\alpha\mapsto (1+\lambda\phi(\alpha))M$ also grows sublinearly, such that   
	$$
	\alpha^* :=\sup \{ \alpha\geq 0: \alpha\leq  (1+\lambda\phi(\alpha)) M \} <\infty.
	$$
	If $|y|> \alpha^*$, by \eqref{eq.1'} and the definition of $\alpha^*$, $|\Psi_\lambda(y)| \le (1+\lambda\phi(|y|)) M < |y|$. If $|y|\leq \alpha^*$, by \eqref{eq.1'}, $\phi$ being increasing, and the definition of $\alpha^*$, we obtain
	$
	|\Psi_\lambda(y)| \leq M (1+\lambda\phi(|y|)\leq M(1+\lambda\phi(\alpha^*))\leq \alpha^*.
	$
	We then conclude $|\Psi_\lambda(y)|\leq \max\{|y|, \alpha^*\}$ for all $y\in\R^d$. Hence, for any $r\geq \alpha^*$, $\Psi_\lambda(\overline{B_r(0)})\subseteq \overline{B_r(0)}$. As $\Psi_\lambda:\R^d\to\R^d$ is continuous, this implies that $\Psi_\lambda$ has a fixed point $y\in\overline{B_r(0)}$, thanks to Brouwer's fixed-point theorem. By Corollary \ref{coro:Psi}, $\Gamma_\lambda(y,\cdot)\in\Pi_r$ is a {\color{black}regular relaxed equilibrium} for \eqref{eq.relaxed.J}. 


\subsection{Proof of Lemma~\ref{lm.vn.bound}}\label{subsec:proof of lm.vn.bound}
	Note that the existence of the sequence $\{y^n\}_{n\in \N}$ is guaranteed by Theorem~\ref{thm.discrete.relaxedentropy}. 
	{\color{black}By using \eqref{eq:supH esti'} in Lemma \ref{lm.1}} in the calculation \eqref{eq.1'}, we get $|\Psi_\lambda(y)|\le \big(1+\lambda\varphi(\lambda,y)\big)M$ for all $y\in\R^d$, where $\varphi$ is specified in \eqref{eq:supH esti'} and $M>0$ is the constant in Assumption~\ref{assume.f.p}. In view of \eqref{eq:supH esti'},
	\[
	\lambda\varphi(\lambda,y) = \kappa_1\lambda +\kappa_2|\lambda\ln\lambda| +\ell \lambda\ln(1+|y|),
	\]
	where $\kappa_1,\kappa_2>0$ are constants independent of $\lambda$. Since $|\lambda\ln\lambda|\to 0$ as $\lambda\downarrow 0$, the above equation implies that for all $\lambda\in(0,1]$, $\lambda\varphi(\lambda,y)\le \eta(|y|)$, with $\eta(z) := K (1+\ln(1+z))$ for some $K>0$ independent of $\lambda\in (0,1]$. We therefore obtain 
	\begin{equation}\label{Psi bdd ls}
		|\Psi_\lambda(y)|\le \big(1+\eta(|y|)\big)M,\quad\forall y\in\R^d\ \hbox{and}\ \lambda\in(0,1].  
	\end{equation}
	As $\eta:\R_+\to \R+$ grows sublinearly (i.e., $\eta(\alpha)/\alpha\to 0$ as $\alpha\to\infty$), $\alpha\mapsto (1+\eta(\alpha))M$ also grows sublinearly, such that  $\alpha^* :=\sup \{ \alpha\geq 0: \alpha\leq  (1+\eta(\alpha)) M \} <\infty$. By using \eqref{Psi bdd ls} and the definition of $\alpha^*$, we may repeat the argument in the last paragraph of the proof of Theorem~\ref{thm.discrete.relaxedentropy} and obtain 
	\[
	|\Psi_\lambda(y)| 
	\begin{cases}
		< |y|,\quad &\hbox{if}\ |y|>\alpha^*,\\
		\le \alpha^*, &\hbox{if}\ |y|\le\alpha^*,
	\end{cases}
	\quad\forall \lambda\in(0,1].
	\]
	Now, for each $n\in\N$, as $y^n=\Psi_{\lambda_n}(y^n)$, the above inequality entails $|y^n|\le \alpha^*$. This readily implies $\sup_{n\in \N} |y^n|\le \alpha^*<\infty$.


\subsection{Proof of Theorem~\ref{thm.lambdaconvergence.discrete}}\label{subsec:proof of thm.lambdaconvergence.discrete}
	Take any sequence $\{\lambda_n\}_{n\in \N}$ in $(0,1]$ with $\lambda_n\downarrow 0$. By Theorem~\ref{thm.discrete.relaxedentropy}, for each $n\in\N$, there exists $y^n\in \R^d$ such that $\Psi_{\lambda_n}(y^n)=y^n$ and $\pi^n:= \Gamma_{\lambda_n}(y^n,\cdot)\in\Pi_r$ is a {\color{black}regular relaxed equilibrium} for \eqref{eq.relaxed.J} (with $\lambda=\lambda_n$ therein). As the sequence $\{y^n\}_{n\in\N}$ is bounded in $\R^d$ (Lemma \ref{lm.vn.bound}), it has a subsequence (without relabeling) that converges to some $y^\infty\in \R^d$. On the other hand, as $U$ is compact, $\Pc(U)$ is compact under the topology of weak convergence of probability measures. Hence, for each $i\in\Sc$, $\{\pi^n(i)\}_{n\in\N}$ in $\Pc(U)$ has a subsequence (without relabeling) that converges weakly to some $\pi^*(i)\in\Pc(U)$. This gives rise to $\pi^*\in\Pi$ (which is not necessarily regular). 
	
	
	Now, we claim that $\pi^*\in \Gamma(y^\infty)$. In view of \eqref{Gamma}, we need to show that $\pi^*(i)\in\Pc(U)$ is supported by the closed set $E(y^\infty, i)\subseteq U$ in \eqref{eq.discrete.supp} for all $i\in\Sc$. As this holds trivially when $E(y^\infty, i) = U$, we assume $E(y^\infty, i) \subsetneq U$ in the following. Our goal is to prove that for any $i\in\Sc$, $(\pi^*(i))\big(\overline{B_r(u_0)}\big)=0$ for all $u_0\in U$ and $r>0$ such that
	\[
	\overline{B_r(u_0)}\cap E(y^\infty,i)=\emptyset\quad\hbox{and}\quad B_r(u_0)\subseteq U. 
	\]
	Note that this readily implies $\text{supp}(\pi^*(i))\subseteq E(y^\infty,i)$ for all $i\in\Sc$, as desired. To this end, take a continuous and bounded function $h:U\to[0,1]$ such that
	\be\label{h condition}
	h(u)\equiv1\quad \hbox{for}\ u\in B_{r}(u_0)\quad\hbox{and} \quad h(u)\equiv0\quad \hbox{for}\ u\notin \overline{B_{r+d/2}(u_0)}, 
	\ee
	where $d:= \text{dist}\big(\overline{B_r(u_0)}, E(v^\infty,i)\big)$ is strictly positive, as $\overline{B_r(u_0)}$ and $E(v^\infty,i)$ are disjoint closed sets. Consider $A:=\max_{u\in {U} }\left\{f(0,i, u)+ p^u_i\cdot y^\infty \right\}<\infty$. Note that $d>0$ implies	
	\[
	\eps := A-\sup\left\{f(0,i,u)+p^u_i\cdot y^\infty:u\in \overline{B_r(u_0)}\right\} >0. 
	\]
	Also, by the continuity of $u\mapsto f(0,i,u)+p^{u}_i\cdot y^\infty$, 
	\[
	L := \Leb(\{ u\in {U}: f(0,i,u)+p^u_i\cdot y^\infty> A-\eps/2\})>0. 
	\]
	As $y^n\to y^\infty$, for all $n\in\N$ large enough, we have
	$$
	A-\sup\left\{f(0,i,u)+p^u_i\cdot y^n:u\in \overline{B_r(u_0)}\right\}>\frac{\eps}{2},\quad \Leb\bigg(\bigg\{ u\in {U}: f(0,i,u)+p^u_i\cdot y^n > A-\frac{\eps}{2}\bigg\}\bigg)\geq \frac{L}{2}.
	$$ 
	This, along with the definition of $\Gamma_{\lambda_n} (y^n,i)$ in \eqref{Gamma_lambda}, yields
	\begin{align}\label{limsup=0}
		\limsup_{n\to\infty}\int_{U}h(u)\Gamma_{\lambda_n} (y^n,i)(u)du &\leq   \limsup_{n\to\infty} \int_{\overline{B_{r+d/2}(u_0)} } \frac{e^{\frac{1}{\lambda_n}(f(0,i,u)+p^{u}_i\cdot y^n)}}{\int_{U} e^{\frac{1}{\lambda_n}(f(0,i,u)+p^u_i\cdot y^n)} du}du\notag\\
		&=  \limsup_{n\to\infty} \int_{\overline{B_{r+d/2}(u_0)} } \frac{e^{\frac{1}{\lambda_n}(f(0,i,u)+p^{u}_i\cdot y^n-(A-\eps/2))}}{\int_{U} e^{\frac{1}{\lambda_n}(f(0,i,u)+p^u_i\cdot y^n-(A-\eps/2))} du}du \notag\\
		&\leq  \limsup_{n\to\infty} \int_{\overline{B_{r+d/2}(u_0)} } \frac{e^{-\frac{\eps/2}{\lambda_n}}}{\int_{\{ u\in {U}: f(0,i,u)+p^u_i\cdot y^n \geq A-\frac{1}{2}\eps\}} 1 du }du\notag\\
		&\leq   \limsup_{n\to\infty}  \frac2L e^{-\frac{\eps/2}{\lambda_n}}  \Leb\left(\overline{B_{r+d/2}(u_0)}\right)=0. 
	\end{align}
	Now, as $h$ is continuous, bounded, and satisfies \eqref{h condition}, 
	\begin{align*}
		(\pi^*(i))\left(\overline{B_r(u_0)}\right)\leq \int_{U} h(u)(\pi^*(i))(du) &=	\lim_{n\to \infty} \int_{U} h(u) (\pi^n(i))(du)\\
		&=\lim_{n\to \infty} \int_{U} h(u) \Gamma_{\lambda_n}(y^n,i)(u)du=0,
	\end{align*}
	where the first equality follows from $\pi^n(i)\to\pi^*(i)$ weakly in $\Pc(U)$ and the last equality is due to \eqref{limsup=0}. The claim ``$\pi^*\in \Gamma(y^\infty)$'' is then established. 
	
	Now, we set out to prove $V_{\lambda_n}^{\Gamma_{\lambda_n}(y^n,\cdot)}\to V^{\pi^*}$. Consider the Markov chain $\bar{X}^n$ with transition matrix $p^n$ given by $p^n_{ij}:= \int_U p^u_{ij} \Gamma_{\lambda_n}(y^n,i)(u)du$ for all $i,j\in \Sc$, as well as the Markov chain $\bar{X}^*$ with transition matrix $p^*$ given by $p^*_{ij}:= \int_U p^u_{ij} (\pi^*(i))(du)$ for all $i,j\in \Sc$. Similarly to the discussion in the last paragraph of the proof of Theorem~\ref{thm.discrete.relaxedentropy}, $p^n\to p^*$ component by component (thanks to $\Gamma_{\lambda_n}(y^n,i)\to\pi^*(i)$ weakly for all $i\in\Sc$), whence the law of $\bar X^n$ converges weakly to that of $\bar X^*$, which in turn implies that the law of $X^{\Gamma_{\lambda_n}(y^n,\cdot)}$ converges weakly to that of $X^{\pi^*}$ (by Remark~\ref{rem:same law}). By Skorokhod's representation theorem, there exist $\Sc$-valued processes $Y_n$ and $Y$, defined on some probability space $(\Omega,\mathcal F, P)$, such that the law of $Y_n$ coincides with that of $X^{\Gamma_{{\lambda_n}}(y^n,\cdot)}$, the law of $Y$ coincides with that of $X^{\pi^*}$, and $Y^n_k\to Y_k$ for all $k\in\N_0$ $P$-a.s. As $\Sc$ is a finite set, for each $k\in\N_0$, we in fact have $Y^n_k = Y_k$ for $n\in\N$ large enough. It follows that for any $i\in\Sc$,
	\begin{align}
		V_{\lambda_n}^{\Gamma_{\lambda_n}(y^n,\cdot)}&(i) - \E_i^P \bigg[ \sum_{k=0}^\infty \delta(1+k)\lambda_n\Hc\left( \Gamma_\lambda(y^n,Y^n_k)  \right) \bigg]\notag\\
		& =  \E_i^P \bigg[ \sum_{k=0}^\infty \left( \int_U f(1+k,Y^n_k,u) \Gamma_\lambda(y^n,Y^n_k) (u)du  \right) \bigg]\notag\\
		&\to  \E_i^P \bigg[ \sum_{k=0}^\infty \left( \int_U f(1+k,Y_k,u) \pi^*(Y_k) (u)du \right) \bigg]=V^{\pi^*}(i),\label{eq.thm2.2} 
	\end{align}
	where the convergence follows from $Y^n_k = Y_k$ for $n\in\N$ large enough, $\Gamma_\lambda(y^n,i)\to \pi^*(i)$ weakly for all $i\in\Sc$, and $u\mapsto f(t,i,u)$ being continuous on the compact set $U$. Moreover, by \eqref{eq:supH esti'} {\color{black}in Lemma \ref{lm.1}}, the boundedness of $\{y^n\}_{n\in\N}$, and $\sum_{k=0}^\infty \delta(1+k)<\infty$ (Assumption~\ref{assume.f.p}), there exist constants $C_1, C_2>0$ independent of $\{\lambda_n\}_{n\in\N}$ such that 
	\[
	\sum_{k=0}^\infty \delta(1+k)\lambda_n \sup_{i\in \Sc} \left| \Hc(\Gamma_{\lambda_n}(y^n,i)) \right| = \big(C_1\lambda_n + C_2 \lambda_n |\ln \lambda_n|\big) \sum_{k=0}^\infty \delta(1+k)\to 0\quad \text{as }n\to\infty,
	\]
	which implies that $\E_i^P \big[ \sum_{k=0}^\infty \delta(1+k)\lambda_n\Hc\left( \Gamma_\lambda(y^n,Y^n_k)  \right) \big]\to 0$. We then conclude from \eqref{eq.thm2.2} that $V_{\lambda_n}^{\Gamma_{\lambda_n}(y^n,\cdot)}(i) \to V^{\pi^*}(i)$ for all $i\in\Sc$.
	
	Finally, recall that $\Psi_{\lambda_n}(y^n)=y^n$ means $y^n = V_{\lambda_n}^{\Gamma_{\lambda_n}(y^n,\cdot)}$. As a result, 
	\be\label{eq.lambda.1} 
	y^\infty=\lim_{n\to\infty} y^n=\lim_{n\to\infty}V_{\lambda_n}^{\Gamma_{\lambda_n}(y^n,\cdot)}=V^{\pi^*}\in \Psi(y^\infty), 
	\ee 
	where the inclusion follows from $\pi^*\in \Gamma(y^\infty)$. In view of \eqref{Psi} and \eqref{Phi}, the above relation implies $\pi^*\in \Phi(\pi^*)$. Hence, by Proposition \ref{prop.chadiscrete.nonentropy}, $\pi^*$ is a relaxed equilibrium for \eqref{J^pi}. 


\subsection{Proof of Proposition~\ref{propconti.entropy.cha}}\label{subsec:proof of propconti.entropy.cha}
	Fix $\pi\in\Pi_r$. By taking $\pi'=\pi$ in \eqref{eq.lm.dis1} and noting $\tJ^{\pi\otimes_{\eps}\pi}_\lambda(i)=\tJ_\lambda^\pi(i) = \tV^\pi_\lambda(0,i)$, we get
	\be\label{pi'=pi}
	\tV_\lambda^\pi(\eps,i)-\tV_\lambda^\pi(0,i)=-\left(f^{\pi(i)}(0,i)+\lambda \Hc(\pi(i))+Q^{\pi(i)}_i\cdot \tV^\pi_\lambda(\eps)\right)\eps,\quad \forall i\in\Sc.
	\ee
	This implies that $t\mapsto \tV^\pi_\lambda(t,i)$ is continuous. Moreover, when we divide both sides by $\eps>0$ and take $\eps\downarrow 0$, since $t\mapsto \tV^\pi_\lambda(t,i)$ is continuous for all $i\in\Sc$, we get  
	\be\label{eq.lm.1'}
	\partial_t \tV^\pi_\lambda(0,i)+f^{\pi(i)}(0,i)+\lambda  \Hc(\pi(i))+Q^{\pi(i)}_i\cdot \tV^{\pi}_\lambda(0) =0, \quad \forall i\in \Sc.
	\ee 
	Now, for any $\pi'\in \Pi_r$, thanks to \eqref{eq.lm.dis1} and \eqref{pi'=pi}, 
	\begin{align*}
		\tV^{\pi'\otimes_\eps \pi}_\lambda(0,i)- \tV_\lambda^{\pi}(0,i)
		&=\Big([f^{\pi'(i)}(0,i)+\lambda  \Hc(\pi'(i))+Q^{\pi'(i)}_i\cdot \tV^\pi_\lambda(\eps)] \\
		&\hspace{0.5in}- [f^{\pi(i)}(0,i)+\lambda  \Hc(\pi(i))+Q^{\pi(i)}_i\cdot \tV^\pi_\lambda(\eps)] \Big) \eps+o(\eps).
	\end{align*}
	It follows that
	\begin{align}\label{eq.lm.1} 
		&\lim_{\eps \downarrow 0} \frac1\eps \left({\tV_\lambda^{\pi'\otimes_\eps \pi}(0,i) - \tV_\lambda^{\pi}(0,i)}\right)\notag\\
		&= \Big(f^{\pi'(i)}(0,i) + \lambda  \Hc(\pi'(i))+Q^{\pi'(i)}_i\cdot \tV_\lambda^\pi(\eps)\Big)-\Big( f^{\pi(i)}(0,i) +\lambda  \Hc(\pi(i))+Q^{\pi(i)}_i\cdot \tV^\pi_\lambda(\eps) \Big)\notag\\
		&= f^{\pi'(i)}(0,i) + \lambda \Hc(\pi'(i))+Q^{\pi'(i)}_i\cdot \tV^{\pi}_\lambda(0)+\partial_t \tV^{\pi}_\lambda(0,i),\quad \forall i\in \Sc, 
	\end{align}
	where the last line follows from the continuity of $t\mapsto \tV^\pi_\lambda(t,i)$ for all $i\in\Sc$ and \eqref{eq.lm.1'}. Hence, $\pi$ is a {\color{black}regular relaxed equilibrium} for \eqref{eq.relaxed.tJ} if and only if
	$$
	\begin{aligned}
		\partial_t \tV^{\pi}_\lambda(0,i)+\sup_{\rho\in \Dc(U)} \left(f^\rho(0,i)+\lambda \Hc(\rho)+Q^{\rho}_i\cdot \tV^{\pi}_\lambda(0)\right)\le 0,\quad \forall i\in \Sc. 
	\end{aligned}
	$$
	In view of \eqref{eq.lm.1'}, this holds if and only if
	\begin{align}\label{pi=rho^*}
		\pi(i) &\in \argmax_{\rho\in \Dc(U)} \left(f^\rho(0,i)+\lambda \Hc(\rho)+Q^{\rho}_i\cdot \tV^{\pi}_\lambda(0)\right)\notag\\
		& = \argmax_{\rho\in \Dc(U)} \int_{U} \Big( f(0,i,u)-\lambda \ln \rho(u) +q^u_i\cdot \tV^{\pi}_\lambda(0) \Big) \rho(u)du,\quad\forall i\in\Sc.   
	\end{align}
	As the set on the right-hand side above is a singleton that contains the density
	\[
	\rho^*(u) =  \frac{e^{\frac{1}{\lambda} \left(f(0,i,u) +q^u_i\cdot \tV^{\pi}_\lambda(0)\right)} }{\int_{U} e^{\frac{1}{\lambda} \left(f(0,i,v) +q^v_i\cdot \tV^{\pi}_\lambda(0)\right)} dv}=\widetilde\Gamma_\lambda(\tV^{\pi}_\lambda(0),i) (u)=\widetilde\Gamma_\lambda(\tJ^{\pi}_\lambda,i) (u),\quad u\in U,
	\]
	the relation \eqref{pi=rho^*} amounts to $\pi(i) = \widetilde\Gamma_\lambda(\tJ^\pi_\lambda, i)$ for all $i\in\Sc$, which is equivalent to $\pi = \widetilde\Phi_\lambda(\pi)$. 


\subsection{Proof of Lemma~\ref{lm.sec3.1}}\label{subsec:proof of lm.sec3.1}
(a) For notational convenience, we will write $X^{h_n}$ for the discrete-time Markov chain whose transition matrix is $P^n=\{P^n(i)\}_{i\in \Sc}$ with its $i^{th}$-row given by
\[
P^n(i):=\int_U (p_{h_n})^u_i (\pi^n(i))(du)= \int_U \left(h_nq^u_i +e_i\right)(\pi^n(i)) (du)=Q^{\pi^n(i)}_i h_n +e_i,
\]
where the second equality follows from \eqref{eq.phtoQ}. We will also write $X^{n}$ and $X^\infty$ for the continuous-time Markov chains with generators $\{ Q^{\pi^n(i)}_i \}_{i\in\Sc}$ and $\{ Q^{\pi^\infty(i)}_i\}_{i\in\Sc}$, respectively. For each $n\in\N$, note that the transition matrix of the discrete-time Markov chain $\left\{X^n_{k h_n} \right\}_{k\in \N}$ is $\widetilde P^n=\{ \widetilde P^n(i) \}_{i\in \Sc}$ with its $i^{th}$-row given by
\be\label{eq.lm41.0}
\widetilde{P^n}(i) := e_i+Q^{\pi^n(i)}_i h_n+o(h_n)= P^n(i)+ o(h_n).
\ee 
In the following, we will adapt the arguments in the proof of \citet[Lemma 3]{HZ21} to the present setting. For any $i\in\Sc$, observe that
\begin{align}\label{eq.lm31.1}
	&\left|V^{h_n,\pi^n}(i)-\tV^{\pi^\infty}(0,i)\right| \notag\\
	&\leq  h_n \Bigg|  \E_i\left[ \sum_{k=0}^\infty \int_{U} f \left( (k+1)h_n, X_{k}^{h_n}, u \right) \left(\pi^n(X_k^{h_n})\right) (du) \right] \notag\\
	&\qquad \quad  -  \E_i \left[\sum_{k=0}^\infty \int_{U} f\left((k+1)h_n, X^{n}_{k h_n}, u\right) (\pi^n(X^{n}_{k h_n})) (du) \right]\Bigg|\notag\\
	&\quad +\Bigg|   h_n \E_i \left[\sum_{k=0}^\infty \int_{U} f\left((k+1)h_n, X^{n}_{k h_n}, u \right) (\pi^n(X^{n}_{k h_n}) )(du) \right] \notag\\
	& \qquad \quad -  \E_i \left[\int_0^\infty \int_{U} f(t, X^{n}_{t}, u) (\pi^n(X^{n}_{t}))(du) dt \right]\Bigg|\notag\\
	&\quad + \left|   \E_i \left[\int_0^\infty \int_{U} f(t, X^{n}_{t}, u) (\pi^n(X^{n}_{t}))(du) dt \right]- \E_i \left[\int_0^\infty  \int_{U} f(t, X^{\infty}_{t}, u) (\pi^\infty(X^{\infty}_{t}))( du) dt \right]\right|.
\end{align}
Let $I_1^n$, $I_2^n$, and $I_3^n$ denote the second, third, and fourth lines, respectively, in the above inequality. 

Let us first deal with $I_1^n$. By Assumption \ref{assume.f.Q}, for any $\eps>0$, we can take $T>0$ such that 
\begin{equation}\label{T large}
	\int_T^\infty \sup_{i, u} |f(t, i,u)|dt<\eps. 
\end{equation}
It follows that
\be\label{eq.I1.0} 
\begin{aligned}
	I^n_1&\leq h_n \bigg|  \sum_{k=0}^{T/h_n} \E_i\bigg[ \int_{U} f\left( (k+1)h_n, X_{k}^{h_n}, u  \right) \left( \pi^n(X_k^{h_n}) \right) (du) \\
	&\hspace{1.2in}- \int_U f\left( (k+1)h_n, X^{n}_{k h_n}, u \right) \left( \pi^n(X^{n}_{k h_n}) \right) (du) \bigg] \bigg| + 2\eps
\end{aligned}
\ee 
In addition, \eqref{eq.lm41.0} implies $(\widetilde{P^n})^k(i)= (P^n)^k(i)+ k o(h_n)(1+o(h_n))^k$, where $(\widetilde{P^n})^k(i)$ (resp.\ $(P^n)^k(i)$) denotes the $i^{th}$-column of the matrix $(\widetilde{P^n})^k$ (resp.\ $(P^n)^k$). By writing $\int_{U}f(kh_n, \cdot , u)(\pi^n(\cdot))(du)$ for the vector $\left( \int_{U}f(kh_n, 1 , u)(\pi^n(1))(du),...,\int_{U}f(kh_n, d , u)(\pi^n(d))(du) \right)\in\R^d$, we observe that
\be\label{eq.I1.1}  
\begin{aligned}
	\E_i\left[\int_{U} f \left( (k+1)h_n, X^{h_n}_k, u \right) \pi^n\left( X^{h_n}_k \right) (du)\right] &=(P^n)^k(i) \cdot  \left(\int_{U}f\left((k+1)h_n, \cdot , u \right)(\pi^n(\cdot))(du)\right)\\
	\E_i\left[\int_{U} f\left( (k+1)h_n, X_{kh_n}^{n}, u \right) \pi^n\left( X^n_{k h_n} \right)(du)\right] &= (\widetilde{P^n})^k(i) \cdot \left(\int_{U}f((k+1)h_n, \cdot , u)(\pi^n(\cdot))(du)\right).
\end{aligned}
\ee 
It then follows from that
\begin{align*}
	I^n_1\leq h_n \widetilde M\sum_{k=0}^{T/h_n} k o(h_n)(1+o(h_n))^k +2\eps &\le h_n \widetilde M\sum_{k=0}^{T/h_n} \frac{T}{h_n} o(h_n) (1+h_n)^{T/h_n} +2\eps\\
	& = \sum_{k=0}^{T/h_n}  o(h_n) = \bigg(\frac{T}{h_n} +1\bigg) o(h_n)+2\eps = o(1)+2\eps. 
\end{align*}
We then obtain $\lim_{n\to\infty} I^n_1 \le 2\eps$. As $\eps>0$ is arbitrary, we conclude $\lim_{n\to\infty} I^n_1=0$. 

We now deal with $I^n_2$. For any $\eps>0$, consider $T>0$ as in \eqref{T large}. Then, observe that $I^n_2\leq \sum_{k=0}^{T/h_n} \E_i[\eta_k] +2\eps$, with 
\[
\eta_k := \left| h_n \int_{U} f \left( (k+1)h_n, X^{n}_{k h_n}, u \right) (\pi^n(X^{n}_{k h_n}) )(du)  -   \int_{kh_n}^{(k+1)h_n}\int_{U} f(t, X^{n}_{t}, u) (\pi^n(X^{n}_{t}))(du) dt\right|.
\]
Set $A_k:= \{ \text{there is no jump for $X^n$ in the time interval $(k h_n, (k+1)h_n]$}\}$. As $f(\cdot,i,\cdot)$ is continuous, it is uniformly continuous on the compact set $[0,T]\times U$. Hence, there exists a modulus of continuity $L$, independent of $i$ and $u$, such that $|f(t,i,u)-f(s,i,u)|\leq L(|t-s|)$ for all $t, s\in[0,T]$. It follows that 
$$
\E_i[\eta_k]\leq \E_i[\eta_k\mid A_k] \P(A_k) + \E_i[\eta_k\mid A^c_k] \P(A^c_k)\leq L(h_n) h_n (1-o(1))+O(h_n)o(1)=o(h_n).
$$
Hence, $I^n_2\leq \sum_{k=0}^{T/h_n} o(h_n)+2\eps=o(1)+2\eps$, which implies $\lim_{n\to\infty} I^n_2\le 2\eps$. As $\eps>0$ is arbitrary, we conclude $\lim_{n\to\infty} I^n_2=0$. 

Finally, we deal with $I^n_3$. For any $i\in\Sc$, as $u\mapsto q^i_u$ is continuous and $U$ is compact, the fact ``$\pi^n(i)\to \pi^\infty(i)$ weakly'' readily implies $Q^{\pi_n(i)}_i\to Q^{\pi^\infty(i)}_i$. That is, the rate matrix of $X^n$ converges to that of $X^\infty$ (component by component). Then, we may follow the argument in the proof of \citet[Theorem 3]{HZ21} (particularly, from the third last line of p.\ 448 to the fourth line on p.\ 449) to obtain $\lim_{n\to\infty} I^n_3= 0$. As $I^n_1$, $I^n_2$, and $I^n_3$ all converge to zero,  we conclude from \eqref{eq.lm31.1} that $V^{h_n,\pi^n}(i)\to \tV^{\pi^\infty}(0,i)$.  

(b) For any $i\in\Sc$ and $u\in U$, thanks to \eqref{Gamma_lambda}, \eqref{f_h}, and \eqref{eq.phtoQ}, 
\begin{align}\label{eq.thm3.1}
	\Gamma_{h_n \lambda}(y^n,i)(u) &=\frac{\exp[\frac{1}{h_n\lambda}(f_{h_n}(0,i,u) +(p_h)^u_i \cdot y^n)]}{\int_{U} \exp[\frac{1}{h_n \lambda}(f_{h_n}(0,i,u) +(p_h)^u_i\cdot y^n )]du} =  \frac{\exp[\frac{1}{\lambda}(f(0,i,u) +q^u_i\cdot y^n)]}{\int_{U} \exp[\frac{1}{\lambda}(f(0,i,u) +q^u_i \cdot y^n)]du}\notag\\
	&\to  \frac{\exp \left[\frac{1}{\lambda}(f(0,i,u) +q^u_i\cdot y^\infty ) \right]}{\int_{U} \exp[\frac{1}{\lambda}(f(0,i,u) +q^u_i \cdot y^\infty )]du} = \widetilde\Gamma_\lambda(y^\infty,i)(u),\quad \hbox{as}\ n\to\infty, 
\end{align}
where the convergence follows from $y^n\to y^\infty$. 
In view of \eqref{tGamma_lambda}, the above implies $\Gamma_{h_n \lambda}(y^n,i)(u) = \widetilde\Gamma_\lambda (y^n,i)(u)\to \widetilde\Gamma_\lambda(y^\infty,i)(u)$. Hence, $\Hc(\Gamma_{h_n\lambda} (y^n,i)) = \Hc(\widetilde\Gamma_\lambda (y^n,i)) \to  \Hc(\widetilde\Gamma_{\lambda} (y^\infty,i))$, where the convergence follows from an argument similar to \eqref{thm.eq}. 
By taking $\pi^n:=\Gamma_{h_n\lambda}(y^n,\cdot)$, the desired result follows from the arguments in part (a) and $\Hc(\Gamma_{h_n\lambda} (y^n,i))  \to  \Hc(\widetilde\Gamma_{\lambda} (y^\infty,i))$ for all $i\in\Sc$.


\subsection{Proof of Theorem~\ref{thm.discts.convergence}}\label{subsec:proof of thm.discts.convergence}
(a) For any $n\in\N$, set $y^n:= V^{h_n, \pi^n}_{\lambda}\in\R^d$. As $\pi^n\in\Pi_r$ is a {\color{black}regular relaxed equilibrium} for  \eqref{eq.relaxed.J^h}, Proposition \ref{propdiscr.entropy.cha} implies $\pi^n=\Phi_{h_n \lambda}(\pi^n)$, i.e., $\pi^n(i) = \Gamma_{h_n \lambda}(y^n,i)$. In addition, Corollary~\ref{coro:Psi} implies $\Psi_{h_n\lambda}(y^n ) = y^n$. Hence, by Lemma~\ref{lm.vn.bound}, $\{y^n\}_{n\in\N}$ is bounded in $\R^d$. For any subsequence of $\{y^n\}_{n\in\N}$ (without relabeling) that converges to some $y^\infty\in\R^d$, Lemma \ref{lm.sec3.1} (b) asserts $\Gamma_{h_n \lambda}(y^n,i)\to \Gamma_{\lambda}(y^\infty,i)$. Thus, $\pi^\infty(i) := \lim_{n\to\infty} \pi^n(i) = \widetilde\Gamma_\lambda(y^\infty,i)$ is well-defined for all $i\in\Sc$. Now, note that
\[
y^\infty = \lim_{n\to\infty} y^n = \lim_{n\to\infty} V^{h_n,\pi^n}_{\lambda} = \lim_{n\to\infty}  V^{h_n, \Gamma_{h_n\lambda}(y^n,\cdot)}_{\lambda} = \tJ_\lambda^{\widetilde\Gamma_\lambda(y^\infty,\cdot)},
\]	
where the last equality follows from Lemma \ref{lm.sec3.1} (b). That is, we have $y^\infty= \widetilde\Psi_\lambda(y^\infty)$. By Corollary~\ref{coro:Psi'}, this implies $\pi^\infty= \widetilde\Gamma_\lambda(y^\infty,\cdot)$ is a {\color{black}regular relaxed equilibrium} for \eqref{eq.relaxed.tJ}. 

(b) As $U$ is compact, $\Pc(U)$ is compact under the topology of weak convergence of probability measures. Hence, for each $i\in\Sc$, $\{\pi^n(i)\}_{n\in\N}$ in $\Pc(U)$ has a subsequence (without relabeling) that converges weakly to some $\pi^*(i)\in\Pc(U)$. By Proposition \ref{prop.chadiscrete.nonentropy}, for each $n\in\N$,
\begin{align}\label{pi^n supp}
	\operatorname{supp}(\pi^n(i)) &\subseteq  \argmax_{u\in {U}} \left\{  f_h(0,i,u) + (p_h)^u_i \cdot V^{h_n, \pi^n} \right\} \notag \\
	&=  \argmax_{u\in {U}} \left\{ f(0,i,u) + q^u_i \cdot V^{h_n, \pi^n} \right\} =	\widetilde{E} \left( V^{h_n, \pi^n},i \right),\quad \forall i\in \Sc,
\end{align}
where the first equality follows from \eqref{f_h} and \eqref{eq.phtoQ} and the last equality holds in view of \eqref{eq.conti.supp}.  
By Proposition \ref{prop.nonentropy.cha}, to show that $\pi^\infty$ is a relaxed equilibrium, it suffices to prove $\pi^\infty\in \widetilde{\Phi}(\pi^\infty)$, i.e., $\operatorname{supp}(\pi^\infty(i))\subseteq  \widetilde{E}(\tJ^{\pi^\infty}, i)$ for all $i\in\Sc$. By contradiction, suppose that for some $i\in\Sc$, there exist $u_0\in U$ and $r>0$ such that
\be\label{eq.thm3.1'} 
\operatorname{dist} \left( B_r(u_0),  \widetilde{E}(\tJ^{\pi^\infty},i) \right)>0\quad\hbox{and}\quad (\pi^\infty(i))(B_r(u_0)) >0
\ee 
By the weak convergence of $\pi^n(i)$ to $\pi^\infty(i)$, $\liminf_{n\to\infty}(\pi^n(i))(B_r(u_0)) \ge (\pi^\infty(i))(B_r(u_0))>0$. This, along with \eqref{pi^n supp}, implies $B_r(u_0)\cap \widetilde{E}(V^{h_n, \pi^n},i)\neq \emptyset$ for $n\in\N$ large enough. Take $u_n\in B_r(u_0)\cap \widetilde{E}(V^{h_n, \pi^n},i)\in U$ for $n\in\N$ large enough. As $\{u_n\}_{n\in\N}$ is a bounded sequence, it converges up to a subsequence to some $u^*\in \overline{B_r(u_0)}\cap U$. Now, by Lemma \ref{lm.sec3.1} (a),
\begin{align*}
	f(0,i,u^*)+ q^{u^*}_i \cdot \tJ^{\pi^\infty}&=\lim_{n\to\infty} \left\{ f(0,i,u_n)+ q^{u_n}_i \cdot V^{h_n, \pi^n} \right\}\\
	&=\lim_{n\to\infty} \max_{u\in U} \left\{f(0,i,u) + q^u_i \cdot V^{h_n, \pi^n} \right\}\ge \max_{u\in U} \left\{f(0,i,u)+q^u_i \cdot\tJ^{\pi^\infty} \right\},
\end{align*}
where the second equality follows from $u_n\in \widetilde{E}(V^{h_n,\pi^n},i)$ and the inequality holds by exchanging the limit and maximization. As $u^*\in U$, the above inequality is in fact an equality, which implies $u^*\in \widetilde{E}(\tJ^{\pi^\infty},i)$. The fact $u^*\in \overline{B_r(u_0)}\cap  \widetilde{E}(\tJ^{\pi^\infty},i)$ readily contradicts the first condition in \eqref{eq.thm3.1'}. 

\bibliographystyle{agsm}
\bibliography{referencenew}

@article{HWZ22,
      title={Convergence of Policy Iteration for Entropy-Regularized Stochastic Control Problems}, 
      author={Yu-Jui Huang and Zhenhua Wang and Zhou Zhou},
      year={2022},
      note={Preprint, available at https://arxiv.org/abs/2209.07059}
}

@article{BWZ23,
	author = {Bayraktar, Erhan and Wang, Zhenhua and Zhou, Zhou},
	fjournal = {Mathematical Finance. An International Journal of Mathematics, Statistics and Financial Economics},
	journal = {Math. Finance},
	number = {3},
	pages = {797--841},
	publisher = {Wiley Online Library},
	title = {Equilibria of time-inconsistent stopping for one-dimensional diffusion processes},
	volume = {33},
	year = {2023}}

@article{HZ20,
	author = {Huang, Yu-Jui and Zhou, Zhou},
	fjournal = {Mathematical Finance. An International Journal of Mathematics, Statistics and Financial Economics},
	journal = {Math. Finance},
	number = {3},
	pages = {1103--1134},
	title = {Optimal equilibria for time-inconsistent stopping problems in continuous time},
	volume = {30},
	year = {2020}}

@article{MR4511236,
	author = {Bayraktar, Erhan and Wang, Zhenhua and Zhou, Zhou},
	fjournal = {SIAM Journal on Financial Mathematics},
	journal = {SIAM J. Financial Math.},
	mrclass = {49K40 (60G40 91A11 91A15 91A55 93E20)},
	mrnumber = {4511236},
	mrreviewer = {\L .\ Stettner},
	number = {4},
	pages = {SC123--SC135},
	title = {Short communication: stability of time-inconsistent stopping for one-dimensional diffusions},
	volume = {13},
	year = {2022}}

@article {DDJ23,
	AUTHOR = {Dai, Min and Dong, Yuchao and Jia, Yanwei},
	TITLE = {Learning equilibrium mean-variance strategy},
	JOURNAL = {Math. Finance},
	FJOURNAL = {Mathematical Finance. An International Journal of Mathematics,
	Statistics and Financial Economics},
	VOLUME = {33},
	YEAR = {2023},
	NUMBER = {4},
	PAGES = {1166--1212}
}

@inproceedings{Ziebart08,
	author = {Brian D. Ziebart and Andrew L. Maas and J. Andrew Bagnell and Anind K. Dey},
	booktitle = {Proceedings of the Twenty-Third {AAAI} Conference on Artificial Intelligence, {AAAI} 2008, Chicago, Illinois, USA, July 13-17, 2008},
	pages = {1433--1438},
	title = {Maximum Entropy Inverse Reinforcement Learning},
	year = {2008}}

@inproceedings{Fox16,
	author = {Roy Fox and Ari Pakman and Naftali Tishby},
	booktitle = {Proceedings of the Thirty-Second Conference on Uncertainty in Artificial Intelligence, {UAI} 2016, June 25-29, 2016, New York City, NY, {USA}},
	title = {Taming the Noise in Reinforcement Learning via Soft Updates},
	year = {2016}}

@article{WZZ20,
	author = {Haoran Wang and Thaleia Zariphopoulou and Xunyu Zhou},
	fjournal= {Journal of Machine Learning Research},
	journal = {J. Mach. Learn. Res.},
	number = {198},
	pages = {1--34},
	title = {Reinforcement Learning in Continuous Time and Space: A Stochastic Control Approach},
	volume = {21},
	year = {2020}}

@article{huang2018time,
	author = {Huang, Yu-Jui and Nguyen-Huu, Adrien},
	fjournal = {Finance and Stochastics},
	journal = {Finance Stoch.},
	number = {1},
	pages = {69--95},
	title = {Time-consistent stopping under decreasing impatience},
	volume = {22},
	year = {2018}}

@article{AB10,
	author = {Alexander, William H. and Brown, Joshua W.},
	fjournal = {Neural Computation},
	journal = {Neural Comput.},
	number = {6},
	pages = {1511-1527},
	title = {Hyperbolically Discounted Temporal Difference Learning},
	volume = {22},
	year = {2010}}

@inproceedings{SRK22,
	author = {Matthias Schultheis and Constantin A. Rothkopf and Heinz Koeppl},
	booktitle = {Advances in Neural Information Processing Systems},
	title = {Reinforcement Learning with Non-Exponential Discounting},
	year = {2022}}

@article{JN21,
	author = {Ja\'{s}kiewicz, Anna and Nowak, Andrzej S.},
	fjournal = {Finance and Stochastics},
	journal = {Finance Stoch.},
	number = {2},
	pages = {189--229},
	title = {Markov decision processes with quasi-hyperbolic discounting},
	volume = {25},
	year = {2021}}

@inproceedings{Fedus20,
	author = {William Fedus and Carles Gelada and Yoshua Bengio and Marc G. Bellemare and Hugo Larochelle},
	booktitle = {International Conference on Machine Learning},
	title = {Hyperbolic Discounting and Learning Over Multiple Horizons},
	year = {2020}}

@article{Strotz55,
	author = {R. H. Strotz},
	fjournal = {The Review of Economic Studies},
	journal = {Rev. Econ. Stud.},
	number = {3},
	pages = {165--180},
	title = {Myopia and Inconsistency in Dynamic Utility Maximization},
	volume = {23},
	year = {1955}}

@article{HZ19,
	author = {Huang, Yu-Jui and Zhou, Zhou},
	fjournal = {SIAM Journal on Control and Optimization},
	journal = {SIAM J. Control Optim.},
	number = {1},
	pages = {590--609},
	title = {The optimal equilibrium for time-inconsistent stopping problems---the discrete-time case},
	volume = {57},
	year = {2019}}

@article{Thaler81,
	author = {Thaler, Richard},
	journal = {Econ. Lett.},
	pages = {201--207},
	title = {Some empirical evidence on dynamic inconsistency},
	volume = {8},
	year = {1981}}

@article{Laibson97,
	author = {David Laibson},
	journal = {Q. J. Econ},
	number = {2},
	pages = {443--477},
	title = {Golden eggs and hyperbolic discounting},
	volume = {112},
	year = {1997}}

@article{LT89,
	author = {George Loewenstein and Richard Thaler},
	fjournal = {Journal of Economic Perspectives},
	journal = {J. Econ. Perspect.},
	pages = {181--193},
	title = {Anomalies: Intertemporal Choice},
	volume = {3},
	year = {1989}}

@article{HZ21,
	author = {Huang, Yu-Jui and Zhou, Zhou},
	fjournal = {Mathematics of Operations Research},
	journal = {Math. Oper. Res.},
	number = {2},
	pages = {428--451},
	title = {Strong and weak equilibria for time-inconsistent stochastic control in continuous time},
	volume = {46},
	year = {2021}}

@article{BKM17,
	author = {Bj\"{o}rk, Tomas and Khapko, Mariana and Murgoci, Agatha},
	fjournal = {Finance and Stochastics},
	journal = {Finance Stoch.},
	number = {2},
	pages = {331--360},
	title = {On time-inconsistent stochastic control in continuous time},
	volume = {21},
	year = {2017}}

@article{Ekeland2008investment,
	author = {Ekeland, Ivar and Pirvu, Traian A.},
	fjournal = {Mathematics and Financial Economics},
	journal = {Math. Financ. Econ.},
	number = {1},
	pages = {57--86},
	title = {Investment and consumption without commitment},
	volume = {2},
	year = {2008}}

@techreport{EL06,
	author = {Ivar Ekeland and Ali Lazrak},
	institution = {University of British Columbia},
	note = {Available at http://arxiv.org/abs/math/0604264},
	title = {Being serious about non-commitment: subgame perfect equilibrium in continuous time},
	year = {2006}}

@article{ekeland2012time,
	author = {Ekeland, Ivar and Mbodji, Oumar and Pirvu, Traian A.},
	fjournal = {SIAM Journal on Financial Mathematics},
	journal = {SIAM J. Financial Math.},
	number = {1},
	pages = {1--32},
	title = {Time-consistent portfolio management},
	volume = {3},
	year = {2012}}

@article{yong2012time,
	author = {Yong, Jiongmin},
	fjournal = {Mathematical Control and Related Fields},
	journal = {Math. Control Relat. Fields},
	number = {3},
	pages = {271--329},
	title = {Time-inconsistent optimal control problems and the equilibrium {HJB} equation},
	volume = {2},
	year = {2012}}

@article {BJN2020,
	AUTHOR = {Balbus, {\L}ukasz and Ja\'skiewicz, Anna and Nowak, Andrzej S.},
	TITLE = {Markov perfect equilibria in a dynamic decision model with quasi-hyperbolic discounting},
	JOURNAL = {Ann. Oper. Res.},
	FJOURNAL = {Annals of Operations Research},
	VOLUME = {287},
	YEAR = {2020},
	NUMBER = {2},
	PAGES = {573--591}
}

@article{BRW15,
	title={Time consistent Markov policies in dynamic economies with quasi-hyperbolic consumers},
	author={Balbus, {\L}ukasz and Reffett, Kevin and Wo{\'z}ny, {\L}ukasz},
	JOURNAL = {Internat. J. Game Theory},
    FJOURNAL = {International Journal of Game Theory},
	volume={44},
	number={1},
	pages={83--112},
	year={2015},
	publisher={Springer}
}

@article{BRW18,
	title={On uniqueness of time-consistent Markov policies for quasi-hyperbolic consumers under uncertainty},
	author={Balbus, {\L}ukasz and Reffett, Kevin and Wo{\'z}ny, {\L}ukasz},
   JOURNAL = {J. Econom. Theory},
    FJOURNAL = {Journal of Economic Theory},
	volume={176},
	pages={293--310},
	year={2018},
	publisher={Elsevier}
}

@article{BRW22,
	title={Time-consistent equilibria in dynamic models with recursive payoffs and behavioral discounting},
	author={Balbus, {\L}ukasz and Reffett, Kevin and Wo{\'z}ny, {\L}ukasz},
	 JOURNAL = {J. Econom. Theory},
	FJOURNAL = {Journal of Economic Theory},
	volume={204},
	pages={105493},
	year={2022},
	publisher={Elsevier}
}

@article{BH23,
	title={Existence of Markov equilibrium control in discrete time},
	author={Bayraktar, Erhan and Han, Bingyan},
	JOURNAL = {SIAM J. Financial Math.},
	FJOURNAL = {SIAM Journal on Financial Mathematics},
	volume={14},
	number={4},
	pages={SC60--SC71},
	year={2023},
	publisher={SIAM}
}

@article{CE16,
	title={Continuous Markov equilibria with quasi-geometric discounting},
	author={Chatterjee, Satyajit and Eyigungor, Burcu},
	JOURNAL = {J. Econom. Theory},
	FJOURNAL = {Journal of Economic Theory},
	volume={163},
	pages={467--494},
	year={2016},
	publisher={Elsevier}
}

\end{document}